\numberwithin{equation}{section}
\newtheorem{theorem}{Theorem}[section]
\newtheorem{lemma}[theorem]{Lemma}
\newtheorem{corollary}[theorem]{Corollary}
\newtheorem*{claim*}{Claim}
\theoremstyle{definition}
\newtheorem{definition}[theorem]{Definition}
\newtheorem{question}[theorem]{Question}
\newtheorem*{notation*}{Notation}
\newcommand{\calR}{\mathcal{R}}
\newcommand{\calX}{\mathcal{X}}
\begin{document}

\title{Probabilistic Burnside groups} 

\author{Gil Goffer}
\address{Department of Mathematics, University of California, San Diego, La Jolla, CA 92093, USA}
\email{ggoffer@ucsd.edu}

\author{Be'eri Greenfeld}
\address{Department of Mathematics, University of Washington, Seattle, WA 98195-4350, USA}
\email{beeri.greenfeld@gmail.com}

\thanks{The authors have no conflicts of interest to declare.}

\maketitle

\begin{abstract} 
We prove that there exists a finitely generated group that satisfies a group law with probability $1$ but does not satisfy any group law. 

More precisely, we construct a finitely generated group $G$ in which the probability that a random element chosen uniformly from a finite ball in its Cayley graph, or via any non-degenerate random walk, satisfies the group law $x^k=1$ for some (fixed) integer $k$, tends to $1$. Yet, $G$ contains a non-abelian free subgroup, and therefore $G$ does not satisfy any group law. In particular, this answers two questions of Amir, Blachar, Gerasimova, and Kozma.


\end{abstract}

\section{Introduction}

A \textbf{law} is a non-trivial element $w\in F(x_1,x_2,\dots)$ in the free group. Given a group $G$, we say that \textbf{$G$ satisfies the law $w$}, or that \textbf{$w$ is a group law for $G$} if $w$ vanishes under any substitution of elements from $G$. For instance, the commutator $[x_1,x_2]$ is a group law for any abelian group, the iterated commutator $[x_1,[x_2,[\cdots[x_l,x_{l+1}]]$ is a group law for any $l$-step nilpotent group, and any finite group of order $k$ satisfies the law $x_1^k$.

Suppose that $G$ is equipped with a natural choice of a probability measure, $\mu$. 
For instance, the uniform measure on a finite group or more generally the Haar measure on a compact group.
Given a law $w$ one can now compute the probability that $w$ is satisfied in $G$, namely:
$$\mathbb{P}_{\mu}(w=1) = \mathbb{P}_{\mu}(w=1 \text{ holds in } G)=\Pr_{g_1,\dots,g_d\sim\mu}(w(g_1,\dots,g_d)=1).$$
In various cases, laws that are satisfied in $G$ with high probability, must actually hold in full. That is: they must be actual group laws for $G$. For example, a classical theorem of Gustafson \cite{Gus} states that if $\mathbb{P}([x_1,x_2]=1)>\frac{5}{8}$ in a finite group $G$, then $G$ must be abelian.
Results in this spirit where also achieved for laws that are iterated commutators, power laws $x^k$ (see \cite{Laf1,Laf2,Laf3,Mann_Mart}), the metabelian law $[[x, y], [z, w]]$ (see \cite{DJMN}) and more.
Sometimes, the fact that a certain law $w$ holds in a finite group $G$ with probability at least $\epsilon$, implies that another law $w'$ is a group law for $G$, see for example \cite{Eber_Shum,Mann}. It is in general not known whether $\mathbb{P}(w=1 \text{ holds in } G)>\epsilon$ for a finite group $G$ always implies that $G$ admits a subgroup of index at most $d$ that satisfies a law $w'$, where $d$ and $w'$ depend only on $w$. 
In the current paper we show that this is not the case for infinite groups (see Theorem \ref{main_theorem}).

Let now $G$ be an infinite, non-compact group. There is no longer a natural probability measure on $G$ to consider. A common approach, as initiated in \cite{AMV} and adopted by \cite{ABGK}, is to take a sequence of probability measures $\vec{\mu}=\{\mu_n\}_{n=1}^{\infty}$ on $G$ whose supports exhaust $G$. We then define the probability that $G$ satisfies the law $w$ with respect to the sequence $\vec{\mu}$ by:
$$ \mathbb{P}_{\vec\mu}(\text{$w=1$ holds in $G$}):= \limsup_{n\rightarrow \infty} \Pr_{g_1,\dots,g_d\sim \mu_n}\left(w(g_1,\dots,g_d)=1\right)
$$
where in the $n$-th term of the limit, $g_1,\dots,g_d$ are independent random variables with distribution $\mu_n$.
In particular, we say that \textbf{$G$ satisfies a law $w$ with probability $1$ with respect to $\vec\mu$} if:
$$ \mathbb{P}_{\vec\mu}(\text{$w=1$ holds in $G$})=1. $$ 
Natural sequences $\vec\mu$ to consider on finitely generated infinite discrete groups are:

\begin{itemize}
    \item \textbf{Uniform measures on balls of the Cayley graph}. Fix a finite symmetric generating set $S$ of $G$ and let $B_{G,S}(n)$ denote the $n$-ball of the Cayley graph of $G$ with respect to $S$. Set $U_{B_{G,S}(n)}$ to be the uniform distribution on $B_{G,S}(n)$. We denote $\vec{\mu}_{(G,S)}=\{U_{B_{G,S}(n)}\}_{n=1}^{\infty}$.

    \item \textbf{Random walks}. Fix a finitely supported step distribution $\nu$ on $G$ whose support generates $G$ as a semigroup (`non-degenerate'). Then $\nu^{*n}$ is the $n^{th}$ step distribution with respect to a $\nu$-random walk. We denote $\vec{\mu}_{(G,\nu)}=\{\nu^{*n}\}_{n=1}^{\infty}$.
    
\end{itemize}

Another notion of probabilistic laws, applicable to residually finite groups, can be found in \cite{Shalev}. See also \cite{ABGK} for a thorough discussion on the various notions of probabilistic group laws.

As in the case of finite group, a natural question that arises is:
\begin{question}\label{que: how close is prob law to an actual law}
    Given a group that satisfies a law with probability $1$, how close is it to satisfying an actual group law?
\end{question}

Of particular interest are power laws. 
The Burnside Problem \cite{Burnside1902} asks whether a finitely generated group of finite exponent must be finite. Equivalently, whether the \textbf{free Burnside group}:
$$ B(m,k) = \left<x_1,\dots,x_m\ |\ X^k=1\ \text{for every word}\ X\ \text{in the alphabet}\ x_1,\dots,x_m\right> $$
is finite for every positive integers $m,k$. 
This problem has been settled in 1968 by Novikov and Adian \cite{NA} who proved that $B(m,k)$ can be infinite for suitable $m,k$;
see also \cite{Ad}.
Ol'shanskii \cite{Ol82,OL-book} introduced a geometric approach to this problem; see also \cite{Iv94}.

In the view of Question \ref{que: how close is prob law to an actual law}, we note the following. When $G$ is finite, for any power law $w=x^k$ there exists a threshold $0<\lambda<1$ depending on the exponent $k$ and on the number of generators of $G$, such that $\mathbb{P}_{\mu}(\text{$x^k=1$ holds in $G$})>\lambda$ implies that $x^k$ is a group law for $G$ (see \cite{Mann_Mart}).
In contrast, the following has been observed in \cite[Theorem~12.1]{ABGK}\footnote{Theorem 12.1 in \cite{ABGK} holds more generally when the power law $x^k$ is replaced by any law $w$ for which there exists a group with exponential growth satisfying $w$. We quoted only the case $w=x^k$ for simplicity.}: if we take an (infinite) Burnside group $B(m,k)$ of exponential growth (that is, sufficiently large $m,k$) and some finite group $H$ not of exponent $k$ then the group $G=B(m,k)\times H$ admits increasing generating subsets $S_\varepsilon$ such that $ \mathbb{P}_{\vec\mu_{(G,S_\varepsilon)}}(x^k=1)>1-\varepsilon $, yet $G$ does not satisfy the law $x^k=1$; it is worth noting that $G$ still satisfies some (power) law, and that the generating sets depend on $\varepsilon$ (and become larger and larger as $\varepsilon\rightarrow 0$).


Another result proven in \cite{ABGK} in this context is that group laws can hold with high probability, with respect to random walks, in a group with a free subgroup. In particular, the authors show that within the wreath product $F_2\wr \mathbb{Z}^5$ one can construct non-degenerate random walks with finitely supported step distributions such that $\mathbb{P}_{\vec\mu_{(G,\nu)}}([[x_1,x_2],[x_3,x_4]]=1)>1-\varepsilon$; this time $G$ evidently contains a free subgroup, so does not satisfy any group law. However, also here, the random walks depend (via the step distributions) on $\varepsilon$, so there is no single 
random walk with respect to which the group satisfies a law in probability $1$.

In this paper we show that it is possible for a finitely generated group to satisfy the power law $x^k$ with probability $1$ with respect to \textbf{any} non-degenerate random walk with a finitely supported step distribution and with respect to uniform measures on balls in the Cayley graph, and yet contain a free subgroup.
In particular, this gives the first example of a group that satisfies a group law with probability $1$ but does not satisfy any group law, and answers two questions of Amir,
Blachar, Gerasimova, and Kozma (see \cite[Question~13.1]{ABGK} and the discussion thereafter, and \cite[Question~13.3]{ABGK}). 
We show:

\begin{theorem} \label{main_theorem}
There exists $k\in \mathbb{N}$ and a finitely generated group $G=\left<S\right>$, such that:
\begin{enumerate}
    \item $G$ satisfies the law $x^k=1$ with probability $1$ with respect to $\vec{\mu}_{(G,S)}$;
    \item $G$ satisfies the law $x^k=1$ with probability $1$ with respect to $\vec{\mu}_{(G,\nu)}$ for any finitely supported non-degenerated step distribution $\nu$; and yet
    \item $G$ admits a free non-abelian subgroup, and hence satisfies no group law.
\end{enumerate}
\end{theorem}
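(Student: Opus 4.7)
The plan is to construct $G$ as a graded small-cancellation quotient of a free group, using the Ol'shanskii-style machinery that underlies the construction of infinite Burnside groups \cite{OL-book, Ol82, Iv94}. Two specific long words $a, b$ in the generating set will be chosen so that their images in $G$ generate a free subgroup of rank two, while every other element of $G$ is forced to satisfy $x^k=1$. The probabilistic conclusions will then follow from distortion-based counting estimates.

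Fix an odd $k$ large enough for the Ol'shanskii machinery to produce infinite $k$-torsion groups, a finite symmetric generating set $S$, and two reduced words $a, b\in F(S)$ of a large common length $L$ in ``general position''---no long piece of any cyclic conjugate of $a$ or $b$ will be allowed to appear in any future relator. Build inductively a family $\{w_i\}_{i\in I}\subset F(S)$ so that the group
\begin{equation*}
G=\langle S\mid w_i^k=1,\ i\in I\rangle
\end{equation*}
satisfies a graded small-cancellation condition and moreover: (a) every element of $G$ not conjugate into $F_2:=\langle a,b\rangle$ has order dividing $k$; and (b) the natural map $F(a,b)\to G$ is injective, so $F_2$ is free of rank two inside $G$. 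Item (b) is a standard Greendlinger-type argument once the relators avoid pieces of reduced words over $\{a,b\}$, and directly yields part (3) of Theorem~\ref{main_theorem}.

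For parts (1) and (2), the key input is the distortion of $F_2$ inside $\Cay(G,S)$. Since $a,b$ have $S$-length $L$ and suffer only bounded cancellation when multiplied in $G$ (by the generality hypothesis), a reduced $F_2$-word of $\{a,b\}$-length $m$ has $S$-length $\Omega(mL)$. Consequently $|F_2\cap B_{G,S}(n)|\le C\cdot 3^{n/(cL)}$, while $|B_{G,S}(n)|\ge e^{\alpha n}$, because $G$ surjects onto an infinite Burnside group of exponential growth (with $\alpha$ independent of $L$). Choosing $L$ large enough that $(\log 3)/(cL)<\alpha$ forces $|F_2\cap B_{G,S}(n)|/|B_{G,S}(n)|\to 0$, which gives~(1) since every $g\notin F_2$ satisfies $g^k=1$. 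For~(2), $G$ is non-amenable (it contains $F_2$), hence by Kesten's criterion the spectral radius $\rho<1$ of any symmetric finitely supported non-degenerate random walk. Combining the $\ell^2$-decay $\|\nu^{*n}\|_2=O(\rho^{n/2})$ with the bound $|\mathrm{supp}(\nu^{*n})\cap F_2|\le 4\cdot 3^{Cn/L}$ (where $C$ bounds the step size in $S$) via Cauchy--Schwarz yields $\nu^{*n}(F_2)\to 0$ for $L$ large; the non-symmetric case is handled similarly by replacing $\nu$ with $\nu \ast \check\nu$.

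The main obstacle is item (a): producing a small-cancellation presentation in which \emph{every} non-torsion conjugacy class outside $F_2$ has been eliminated, without destroying $F_2$ itself. At each inductive stage one must impose enough new power relations $w^k=1$ to kill the next candidate non-torsion conjugacy class, while preserving both the graded small-cancellation condition and---crucially---the freeness of $\langle a,b\rangle$, i.e.\ the requirement that no new relator $w_i$ share a long piece with any reduced word over $\{a,b\}$. Threading this needle inside the Ol'shanskii framework, where each stage must simultaneously broaden the torsion while sparing a prescribed free subgroup, is the technical heart of the construction.
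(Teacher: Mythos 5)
Your proposal shares the paper's high-level strategy---an Ol'shanskii-style graded presentation that adds $k$-th-power relators only for words not conjugate into a designated rank-two subgroup, followed by a density estimate---but there is a genuine gap in the density estimate, and it recurs in both parts~(1) and~(2). Your item~(a) correctly says that $g^k=1$ for every $g$ not \emph{conjugate into} $F_2=\langle a,b\rangle$; yet the counting you then perform bounds $|F_2\cap B_{G,S}(n)|$ and $\nu^{*n}(F_2)$, whereas the set that must be controlled is the union of conjugates $H^G=\bigcup_{g\in G}gF_2g^{-1}$. This is not cosmetic: \emph{a priori}, the conjugates of a thin subgroup can occupy a large fraction of the ambient group (the paper flags exactly this danger and cites Osin's small-cancellation embedding theorem for an extreme instance). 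The technical core of the paper is precisely a geometric statement repairing this: Lemma~\ref{lem:conjugates of K have length 2V+K} shows that any $g\in H^G$ has a geodesic representative $VKV^{-1}$ with $K\in F(a,b)$, $V\in F(S)$ and $\|g\|=2|V|+|K|$ \emph{exactly}. That lemma is proved by an annular-diagram argument exploiting Condition~$A$ together with the key observation (Lemma~\ref{lem:contiguity degree to <a,b> is small}) that every relator has contiguity degree $<\epsilon$ to any $\{a,b\}$-labelled section, which in turn rests on requirement~(S3) being built into the definition of a simple word. With that lemma in hand one obtains $|H^G\cap B_G(n)|\le\sum_{i=0}^{n}\gamma_H(i)\,\gamma_G(\tfrac{n-i}{2})$ (Lemma~\ref{lem:uniform}), and the density goes to zero once the growth exponent of $G$ exceeds the free-group rate. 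Nothing in your proposal addresses why such a convexity-of-conjugates statement should hold.

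Two further remarks. Your random walk argument is more elaborate than needed and also inherits the same gap (Cauchy--Schwarz would have to be applied to $\nu^{*n}(H^G)$, and you have no control on $|\mathrm{supp}(\nu^{*n})\cap H^G|$). The paper sidesteps this entirely: since $H^G\subset\langle\langle H\rangle\rangle$ and $G/\langle\langle H\rangle\rangle\cong B(m,k)$, one has $\Pr_{\nu^{*n}}(x^k\neq1)\le\Pr_{(\pi_\sharp\nu)^{*n}}(x=1)$, the return probability of a non-degenerate random walk on the infinite group $B(m,k)$, which tends to $0$ with no spectral-radius, non-amenability, or distortion input whatsoever. Finally, you take $a,b$ to be long words of length~$L$ and win the growth race via distortion; the paper instead takes $a,b$ as two of the generators and wins by adding many extra generators $s_1,\dots,s_m$, so that the growth exponent of $G$ (bounded below by that of $B(m,k)$, which tends to infinity with $m$ by Coulon's theorem) exceeds the free-group rate $3$. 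This avoids the ``general position'' and undistortion claims you would need to certify for long words, and yields the clean equality $\|K\|=|K|$ for all $K\in F(a,b)$ (Lemma~\ref{lem:K is convex}).
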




Our proof is inspired by Ol'shanskii's solution to the Burnside problem; he introduced a geometric approach to Burnside groups, constructing infinite Burnside groups
using small cancellation theory and the theory of hyperbolic groups \cite{Ol82,OL-book}.


To prove Theorem \ref{main_theorem}, we construct a \textbf{probabilistic Burnside group} that contains a free subgroup. Namely we show:

\begin{theorem} \label{thm:main_construction}
For every odd $k>10^{10}$ and large enough $m$ there exists a group $G$ generated by the finite set $S=\{ a,b,
s_1,\dots,s_m\}$, such that:
\begin{enumerate}
    \item 
$ \mathbb{P}_{\vec{\mu}_{(G,S)}}(\text{$x^k=1$ holds in $G$}) = 1 $
\item $ \mathbb{P}_{\vec{\mu}_{(G,\nu)}}(\text{$x^k=1$ holds in $G$}) = 1 $
    \item the subgroup $\langle a,b\rangle \leq G$ is free non-abelian.
\end{enumerate}
where $\nu$ is any step-distribution on $G$ whose support is finite and generates $G$ as a semigroup.
\end{theorem}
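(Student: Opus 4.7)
The plan is to build $G$ via an Ol'shanskii-style graded small cancellation construction that imposes $x^k = 1$ on almost every element while deliberately preserving $\langle a, b\rangle$ as a free subgroup. Starting from the free group $F = F(a, b, s_1, \dots, s_m)$, enumerate the cyclically reduced periods $w_1, w_2, \dots \in F$ (representatives of conjugacy classes of non-proper powers) that contain at least one letter $s_j^{\pm 1}$, ordered by non-decreasing length, and inductively set $G_i := G_{i-1}/\ngen{w_i^k}$ with $G := \varinjlim G_i$. At each stage one verifies that the full set of relators satisfies Ol'shanskii's graded small cancellation condition $\scc$ with respect to $G_{i-1}$, exactly as in \cite{OL-book}; this is a geometric property of the chosen periods and is unaffected by the deliberate omission of periods lying in the free factor $F(a, b) \le F$.

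The next step is to run Ol'shanskii's structure theorems through this modified presentation. First, one shows that $\langle a, b\rangle \le G$ remains free: any reduced van Kampen diagram over the presentation of $G$ whose boundary word lies in $F(a, b)$ must contain a face labeled by some $w_i^k$ whose boundary carries letters $s_j^{\pm 1}$, and these cannot be made to cancel without creating a piece that violates the small cancellation hypothesis; hence no nontrivial identity in $F(a, b)$ holds in $G$. Second, by the classification of elements in a graded small cancellation quotient, every $g \in G$ is conjugate either to a power $w_i^j$ of some admissible period --- in which case $g^k = 1$ since $w_i^k = 1$ is a relation --- or to an element of $\langle a, b\rangle$. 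Consequently, the ``bad set'' $N := \{g \in G : g^k \ne 1\}$ is contained in the union of conjugates $\bigcup_{h \in G} h \langle a, b\rangle h^{-1}$, and in fact coincides with the set of infinite-order elements of $G$.

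Given this dichotomy, the probability estimates reduce to bounding the density of $N$. For Cayley balls: the free subgroup $\langle a, b\rangle \cong F_2$ contributes at most $\sim 3^n$ elements to $B_{G,S}(n)$, while the growth rate of $G$ itself is close to $2m + 3$ for large $m$ (the long small cancellation relators alter growth only slightly, cf.\ \cite{OL-book}); combined with quasiconvexity of $\langle a, b\rangle$ in the hyperbolic intermediate quotients $G_i$, standard counting arguments yield $|N \cap B_{G,S}(n)|/|B_{G,S}(n)| \to 0$. For random walks: any non-degenerate finitely supported $\nu$ has support generating $G$ as a semigroup, hence escapes every fixed conjugate of $\langle a, b\rangle$; combined with the exponential drop of the $n$-step probability $\Pr(X_n = g) \le e^{-hn}$ (where the entropy $h$ is close to $\log(2m+3)$) and the bound $|B_{G,S}(cn) \cap \langle a, b\rangle| \le 3^{cn}$ for the walk's support diameter $c$, one obtains $\Pr(X_n \in N) \to 0$ once $m$ is large enough that $h > c \log 3$.

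The main obstacle is adapting Ol'shanskii's arguments to this \emph{selective} setting. Classically every period is killed, producing a torsion group with no free subgroups; here, periods lying entirely in $F(a, b)$ are left untouched, so the resulting $G$ carries a free subgroup while being ``mostly'' of exponent $k$. The structural classification of elements and the separation of $\langle a, b\rangle$ must be reproved in this inhomogeneous regime, verifying that Ol'shanskii's inductive arguments (which proceed by rank of period) correctly accommodate the retained periods sitting in the fixed subgroup $F(a, b)$. Once the geometric properties of $G$ are secured, the probabilistic estimates of the previous paragraph should follow from well-established growth and entropy techniques on hyperbolic groups.
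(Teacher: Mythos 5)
Your overall strategy — an Ol'shanskii-type graded construction that kills $X^k$ for periods outside $F(a,b)$, combined with a density count of the surviving conjugates of $\langle a,b\rangle$ — matches the paper's. But there are three genuine gaps.

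First, your selection rule for periods is too coarse. You exclude only periods that, as words in the free group $F$, lie (up to $F$-conjugacy) in $F(a,b)$. But a period $w$ containing some $s_j^{\pm1}$ may become conjugate to a word in $\langle a,b\rangle$ \emph{in the quotient} $G_{i-1}$, in which case imposing $w^k=1$ forces a nontrivial relation inside $\langle a,b\rangle$, destroying freeness. The paper's Definition \ref{def:simple} instead requires, in condition (S3), that a candidate period be non-conjugate to $\langle a,b\rangle$ \emph{in $G(i)$} (the current quotient), not merely in $F$. This difference is essential, not cosmetic.

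Second, your Cayley-ball density argument is incomplete. You bound $|B_{G,S}(n)\cap\langle a,b\rangle|\lesssim 3^n$ and invoke ``quasiconvexity and standard counting,'' but the bad set is $H^G=\bigcup_{x\in G}xHx^{-1}$, a union over all of $G$, and in general the union of conjugates of even a very small subgroup can have density $1$ (cf.\ \cite{Osin}). If you only knew each conjugate of $H$ is quasiconvex, the natural bound would be $\sum_{i\le n}\gamma_H(i)\gamma_G(n-i)$, which is $\asymp\gamma_G(n)$ and gives no vanishing density. The paper's Lemma \ref{lem:conjugates of K have length 2V+K} is the key missing ingredient: any $g\in H^G$ has a geodesic representative of the precise form $VKV^{-1}$ with $\|g\|=2|V|+|K|$, so the count becomes $\sum_{i}\gamma_H(i)\gamma_G\bigl(\tfrac{n-i}{2}\bigr)\lesssim(3+\sqrt{\alpha+1})^n$, which is $o(\gamma_G(n))$ once the growth exponent $\alpha$ exceeds $7$. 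That halving of the conjugator's contribution is what makes the density go to zero; you need to prove it, not just gesture at quasiconvexity.

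Third, your random-walk argument does not work as stated. You bound the probability of landing in the \emph{fixed} subgroup $\langle a,b\rangle$ (via $3^{cn}e^{-hn}$), but the bad set is the full union of conjugates $H^G$, which you never control. Worse, your condition ``$h>c\log 3$'' makes $m$ depend on the support diameter $c$ of $\nu$, whereas the theorem requires a single $G$ (hence a single $m$) that works for \emph{every} finitely supported non-degenerate $\nu$. The entropy bound $h\approx\log(2m+3)$ is also not justified; $h$ depends on $\nu$ and can be small. The paper's argument sidesteps all of this with one observation: $H^G\setminus\{1\}\subseteq\langle\langle H\rangle\rangle$, and $G/\langle\langle H\rangle\rangle\cong B(m,k)$ (Corollary \ref{cor:quotient burnside}), so $\Pr_{\nu^{*n}}(x^k\ne 1)\le\Pr_{(\pi_\sharp\nu)^{*n}}(x=1)$, the return probability of a non-degenerate random walk on the non-amenable group $B(m,k)$, which tends to $0$ for every such $\nu$. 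This is the step you should use.
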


Note that Theorem \ref{thm:main_construction} directly implies Theorem \ref{main_theorem}.

The first step in proving Theorem \ref{thm:main_construction} is to construct a group $G=\langle a,b,s_1,\dots,s_m\rangle$ that projects onto the free Burnside group $B(m+2,k)$ in which:
\begin{enumerate}
    \item the subgroup $H=\langle a,b \rangle \leq G$ is free non-abelian;
    \item every element in $G$ that is not conjugate to an element of $H$ satisfies the law $x^k=1$.
\end{enumerate}
In fact, $G$ is a `minimal cover' of the free Burnside group $B(m+2,k)$ that contains a free subgroup.
Interesting constructions of `almost periodic quotients' have been developed in \cite{Coulon_Burnside_Quotients}.

A major challenge is bounding the relative growth of the conjugates of $H$ in $G$. Namely, we need to show that `almost all' elements in sufficiently large balls in the Cayley graph of $G$ are not conjugate to $H$ and thus satisfy $x^k=1$. This guarantees that $G$ satisfies the law $x^k=1$ with probability $1$.
In general, even the conjugates of small subgroups can occupy a large portion of the ambient group; a surprisingly extreme situation of a group in which this phenomenon occurs is given in \cite{Osin}.
In order to make sure that the union of conjugates of $H$ in $G$ remains tame, 
we show that $H$ performs a certain convexity-like behavior: the shortest presentation in $G$ of an element of $H$ is always a word in the generators of $H$. Moreover, the shortest presentation in $G$ of a conjugate of an element of $H$ always takes the form $ghg^{-1}$ where $h$ is a word in the generators of $H$, and $g\in G$ is arbitrary. This requires delicate geometric analysis of a limit small cancellation construction.

The methods we use are derived from Ol'shanskii's work but require further layers of construction in order to make sure that the probabilistic Burnside-type property and a free subgroup coexist.

The structure of the article is as follows: Section \ref{sec:preliminaries} consist of the relevant preliminaries from \cite{OL-book}; Section \ref{sec:construction and proofs} introduces the main construction; Section \ref{sec:condition A} proves the necessary geometric properties of maps over the presentation we construct; Section \ref{sec:proof of main theorem} verifies that the law $x^k=1$ holds in the constructed group with probability $1$, thus completing the proof of Theorem \ref{thm:main_construction}. Lastly, the appendix proves the main small cancellation feature of the construction, which is essential to proving Theorem \ref{thm:main_construction}, and includes a road-map for points of resemblance and difference between our construction and the construction in \cite{OL-book}.

\subsection*{Acknowledgements.} 
We are gratefully thankful to Denis Osin for inspiring discussions, and in particular for his advice regarding Lemma \ref{lem:conjugates of K have length 2V+K}; and to Rémi Coulon for bringing the interesting construction in \cite{Coulon_Burnside_Quotients} to our attention.


\section{Diagrams over groups}
\label{sec:preliminaries}

The definitions and results in this section are taken from \cite{OL-book}.

\subsection{Letters and words}
Let $S=\{s_1,\dots,s_d\}$ be a fixed alphabet and let $S^{-1}=\{s_1^{-1},\dots,s_d^{-1}\}$ be a new alphabet. A \emph{word} in the alphabet $S\cup S^{-1}$ is a finite sequence $X=x_1x_2\dots x_l$ with each $x_i\in S\cup S^{-1}$. The number $l$ is called \emph{the length} of $X$ and is denoted by $|X|$. The empty word has length $0$ by convention. $X$ is called \emph{reduced} if it does not contain any occurrence of $xx^{-1}$ or $x^{-1}x$ for any $x\in S$. We denote by $F=F(S)$ the set of all reduced words over the alphabet $S \cup S^{-1}$. We use the same notation, $F(S)$, for the free group over the alphabet $S$.
Two (not necessarily reduced) words $X,Y$ that are the same letter by letter are said to be \emph{visually equal}, and we then write $X\equiv Y$. A reduced word $Y$ is called a \emph{subword} of another reduced word $X$ if $X\equiv UYV$ for some words $U,V$. 


If $X\equiv YZ$ is a reduced word then $ZY$ is called a \emph{cyclic shift} of $X$. Clearly cyclic shifts of $X$ are conjugate to $X$ in $F=F(S)$ and every word is conjugate in $F$ to a \emph{cyclically reduced word}, that is, a reduced word whose first and last letters are not mutual inverses. Moreover, if $X,X'$ are two cyclically reduced words that are conjugate in $F$ then $X'$ is a cyclic shift of $X$.
A \emph{cyclic word} is the set of all cyclic shifts of a word $X$. A \emph{subword} of a cyclic word is a subword of one of the cyclic shifts of the ordinary word $X$.





\subsection{Maps}
A cell-decomposition $\Delta$ of a surface $X$ is called a \emph{map} on $X$. 
When $X$ is an orientable surface, we assume that an orientation has been fixed on $\Delta$.
Maps on a disc are called \emph{circular} and maps on an annulus are called \emph{annular}. 
Any side of $\Delta$ with a specified orientation is called an \emph{edge} in $\Delta$, that is, every side gives rise to two edges in $\Delta$, which are called inverses of each other. Let $e_-,e_+$ denote respectively the uniquely determined vertices of the edge $e$, so that $(e^{-1})_-=e_+$ and $(e^{-1})_+=e_-$. 
If $e_1,\dots,e_n$ are edges in $\Delta$ such that $(e_i)_+=(e_{i+1})_-$ for $i=1,\dots,n-1$, then the sequence $p=e_1\dots e_n$ is called a \emph{path} in $\Delta$. We denote the length of such a path by $|p|=n$.
The concept of \emph{subpath} is fully analogous to that of a subword: $p$ is a subpath of $q$ if $q=p_1pp_2$ for some paths $p_1,p_2$.

A path $p$ is called \emph{simple} if the vertices $(e_1)_-,\dots,(e_n)_-,(e_n)_+$ are pairwise distinct. A path $p$ is called \emph{geodesic} if $|p|\leq |p'|$ for any path $p'$ that can be achieved from $p$ by performing finitely many insertions of loops, deletions of loops, insertions of subpaths of the form $ee^{-1}$, and deletions of subpaths of the form $ee^{-1}$.

By convention, the boundary of a circular map is traversed clockwise, and for an annular map, the outer is traversed clockwise while the inner is traversed counter clockwise. Any connected component of the boundary of a map 
that form a loop is called a \emph{contour} of the map; the contour(s) of a map $\Delta$ are denoted $\partial \Delta$; circular maps have one contour and annular maps have two. The same terminology and notation is used for the contour $\partial \Pi$ of a cell $\Pi$. 
We consider contours up to cyclic shifts, namely, we do not specify a unique starting point.


\subsection{Diagrams over groups}
A map $\Delta$ equipped with an assignment of a letter $\phi(e)\in S\cup S^{-1}$ for each edge $e$ of $\Delta$ such that $\phi(e^{-1})\equiv \phi(e)^{-1}$ is called a \emph{diagram over $S$}. 
The label of a path $p=e_1\dots e_n$ is set to be the word $\phi(p):=\phi(e_1)\dots\phi(e_n)$ over $S$; if $|p|=0$ then $\phi(p)\equiv 1$ by definition. The label of a contour (of a cell or diagram) is defined up to a cyclic shift, that is, as a cyclic word.

Let $\calR\subset F(S)$ be a set of words. 
A cell of a diagram $\Delta$ is called an $\calR$-cell if the label $\phi(p)$ of its contour is equal up to a cyclic shift to some $R\in \calR\cup \calR^{-1}$. 


A \emph{diagram 
over a presentation $G=\langle S \mid \calR\rangle$} is any diagram $\Delta$ over the alphabet $S$ whose cells are all $\calR$-cells. 
We also say that $\Delta$ is a \emph{diagram over $G$}.
Ol'shanskii \cite{OL-book} calls a diagram over a presentation \emph{reduced} if it does not contain a pattern called `$j$-pair' 
and shows that any diagram $\Delta$ over a given presentation can be replaced by a reduced diagram with the same contour label. 
We use the following equivalent definition by Osin \cite{Osin}:
A diagram $\Delta$ over a presentation $G=\langle S \mid \calR \rangle$ is called \emph{reduced} if it has a minimal number of $\calR$-cells among all diagrams over $G$ having the same boundary label as $\Delta$.
\subsection{Graded presentations and graded diagrams}
Suppose that a set of words $\calR$ in $F(S)$ is decomposed into 
subsets $\calR=\bigcup_{i=1}^{\infty}\calR_i$ in such a way that $\calR_i$ is disjoint from the set of cyclic conjugates and inverses of $\calR_j$ for any $j\neq i$.
Then the presentation  $G=\langle S \mid \bigcup_{i=1}^{\infty}\calR_i\rangle$
is called a \emph{graded presentation}, and the members of $\calR_i$ are called \emph{relators of rank $i$}. If two words $X,Y$ are equal in the group $G(i):=\langle S \mid \bigcup_{j=1}^{i}\calR_j\rangle$,
we say that they are \emph{equal in rank $i$}. By convention, $G(0)=F(S)$, so `equality of rank $0$' of reduced words $X,Y$ amounts to $X\equiv Y$. Conjugacy in rank $i$ is defined in a similar manner.

A \emph{graded map} $\Delta$ is a map on a surface together with an assignment of an integer $r(\Pi)\geq 0$ to any cell $\Pi$ of $\Delta$. We call $r(\Pi)$ the \emph{rank} of $\Pi$. 
A diagram $\Delta$ is called \emph{graded} if its underlying map is graded (i.e., the map obtained by forgetting its labeling). Naturally, for a diagram $\Delta$ over a graded presentation, we set $r(\Pi)=i$ if the contour label of the cell $\Pi$ is visually equal to a relator of rank $i$. 

The rank of a map (or a diagram) is: $r(\Delta)=\max_{\Pi\in \Delta} r(\Pi)$. 
The following two Van-Kampen type theorems are proven by Ol'shanskii in \cite{OL-book}.


\begin{lemma}[{\cite[Theorem 13.1]{OL-book}}]\label{lem:thm 13.1 van kampen}
    Let $W$ be a non-empty word in $F(S)$. Then $W=1$ in the group $G$ given by a graded presentation if and only if there is a reduced circular diagram over this presentation, whose contour label is visually equal to $W$.
\end{lemma}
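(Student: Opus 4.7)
The plan is the classical van Kampen argument, greatly simplified by the Osin reformulation of ``reduced'' as ``minimal number of cells'' quoted in the preliminaries. For the \emph{only-if} direction, I would use that $W=1$ in $G = \langle S \mid \bigcup_i \mathcal{R}_i \rangle$ if and only if $W$ equals a finite product $\prod_{j=1}^n u_j R_j^{\epsilon_j} u_j^{-1}$ in $F(S)$ with each $R_j$ a relator, each $\epsilon_j \in \{\pm 1\}$, and each $u_j \in F(S)$. Given such an expression, I would construct a ``cactus'' circular diagram $\Delta_0$ by attaching $n$ lollipops to a common base vertex: the $j$-th lollipop consists of an arc labeled $u_j$ terminating in a disc whose boundary is labeled $R_j^{\epsilon_j}$ and which is assigned the rank $i$ for which $R_j \in \mathcal{R}_i$. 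Reading the outer boundary of $\Delta_0$ recovers the product above, and after free reduction of matched inverse edges on the boundary the contour label becomes visually equal to $W$.

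For the \emph{if} direction, given any circular diagram $\Delta$ with contour label $W$ (reduced or not), I would induct on the number of cells. If $\Delta$ has no cells, then $W$ is already freely trivial. Otherwise, pick any cell $\Pi$, join the basepoint of $\partial\Delta$ to a vertex of $\partial\Pi$ by a simple arc $\gamma$ lying in $\Delta$, and cut open along $\gamma$. This exhibits $W$ in $F(S)$ as
\[
W \;=\; \phi(\gamma)\, R^{\pm 1}\, \phi(\gamma)^{-1} \cdot W',
\]
where $R$ is (a cyclic shift of) a relator and $W'$ is the contour label of the diagram obtained by excising $\Pi$, which has one fewer cell. Since $R = 1$ in $G$ and by induction $W' = 1$ in $G$, we conclude $W = 1$ in $G$.

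Finally, to upgrade the diagram produced in the forward direction to a \emph{reduced} one, I would simply invoke the Osin definition: the set of diagrams over the presentation with contour label visually equal to $W$ has been shown to be nonempty, so pick any member of it with the minimum number of cells. The only mildly technical point in the whole proof is controlling the free reduction of the boundary in the cactus construction — when adjacent subwords $u_j$ and $u_{j+1}^{-1}$ cancel partially, the corresponding boundary edges must be identified in pairs in a way that keeps the resulting complex a planar simply connected disc diagram. This is routine and poses no genuine obstacle; notably, the main ``hard step'' of the classical proof in \cite{OL-book} — removing $j$-pairs without altering the contour label to produce a reduced diagram — is entirely bypassed by the Osin reformulation, which is why this lemma is quoted here rather than re-proved.
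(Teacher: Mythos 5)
The paper does not prove this lemma; it simply cites \cite[Theorem 13.1]{OL-book}, so there is no in-text argument to compare against. Your reconstruction is the standard van Kampen argument: the cactus (wedge-of-lollipops) construction for the forward implication, and cell-excision plus induction on the number of cells for the converse. Both are correct in outline, and you correctly make the diagram graded by assigning each lollipop cell the rank $i$ determined by which $\calR_i$ its relator lies in. Your key observation — that Osin's reformulation of ``reduced'' as ``minimal cell count'' dissolves the reduction step — is also right: once nonemptiness of the set of circular diagrams with contour label visually equal to $W$ is established, one simply takes a member with the fewest $\calR$-cells, which is reduced by definition, and this replaces Ol'shanskii's $j$-pair surgery entirely. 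The one caveat worth flagging is that the folding step you dismiss as ``routine'' is precisely where Ol'shanskii's own treatment invests real effort via the $0$-cell and $0$-refinement machinery of \cite{OL-book}: when matched boundary edges are identified, a cell may fold onto itself or onto a neighbour and create spherical or pinched components, and keeping the result a planar simply connected disc diagram is not a one-line observation. This is well-known and does not undermine the argument, but it is doing more work than your ``mildly technical'' suggests, and it is in fact the real content hidden behind the citation.
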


\begin{lemma}[{\cite[Theorem 13.2]{OL-book}}] \label{lem:thm 13.2 annular van-kampen}
    Let $V$ and $W$ be two non-empty words in $F(S)$. Then these words are conjugate in the group $G$ given by a graded presentation if and only if there is a reduced annular diagram over this presentation, whose contours $p$ and $q$ satisfy $\phi(p)\equiv V$ and $\phi(q)\equiv W^{-1}$.
\end{lemma}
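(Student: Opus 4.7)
The plan is to derive this annular Van Kampen theorem from its disk version in Lemma \ref{lem:thm 13.1 van kampen}, using the two standard topological operations of (a) cutting an annular diagram along a simple radial arc to produce a disc diagram, and (b) folding a disc diagram whose contour word contains a pair of mutually inverse subwords so as to produce an annular diagram. The main subtlety, as always in the reduced setting, is preserving (or restoring) reducedness; this is handled via Osin's cell-minimal characterization of reducedness recalled earlier in this section.

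For the ``if'' direction, suppose $\Delta$ is a reduced annular diagram over the given graded presentation whose outer contour $p$ is labeled $V$ and whose inner contour $q$ is labeled $W^{-1}$. Choose any simple path $\gamma$ in $\Delta$ from a vertex of $p$ to a vertex of $q$, and set $U\equiv \phi(\gamma)$. Cutting $\Delta$ open along $\gamma$ produces a circular diagram $\Delta'$ over the same presentation whose contour label is, up to cyclic shift and orientation convention, visually $V\, U\, W^{-1}\, U^{-1}$. Since every cell of $\Delta'$ is an $\calR$-cell, the (easy direction of the) disk Van Kampen theorem gives $V U W^{-1} U^{-1}=1$ in $G$, i.e., $V=UWU^{-1}$ in $G$, so $V$ and $W$ are conjugate.

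For the converse, suppose $V=UWU^{-1}$ in $G$ for some $U\in F(S)$, so that $V U W^{-1} U^{-1}=1$ in $G$. Lemma \ref{lem:thm 13.1 van kampen} then supplies a reduced circular diagram $\Delta'$ whose contour label is visually $V U W^{-1} U^{-1}$. Now identify the two subpaths of $\partial \Delta'$ labeled $U$ and $U^{-1}$ letter by letter; this folding operation glues the disc into an annulus and produces an annular diagram $\Delta$ over the presentation whose outer contour is labeled $V$ and whose inner contour is labeled $W^{-1}$, as required.

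The final point to verify is that the annular diagram produced in the converse can be chosen reduced. The folding could in principle create a cancelling pair of cells across the identified path, or more generally fail to achieve the minimum cell count, so the diagram $\Delta$ obtained directly from the fold need not be reduced. This is resolved by the standard cell-minimization argument afforded by Osin's definition: among all annular diagrams over the presentation with the same pair of boundary labels as $\Delta$, pick one with the minimal number of $\calR$-cells. Such a diagram is reduced by definition, completing the proof. I expect this last reducedness step to be the only place where care is needed; the topological content of the argument is entirely routine once the disk version is granted.
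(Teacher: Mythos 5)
Your proof is correct and follows the same cut-along-a-radial-arc / fold-along-a-pair-of-$U$-labeled-boundary-segments scheme that Ol'shanskii uses to derive Theorem~13.2 from Theorem~13.1 in \cite{OL-book}, with reducedness handled cleanly (and, in fact, more cleanly than the original $j$-pair elimination) via Osin's cell-minimality definition, which the paper adopts precisely for this kind of convenience. One small imprecision worth noting: in the ``if'' direction, the disc diagram $\Delta'$ obtained by cutting need not be reduced, so Lemma~\ref{lem:thm 13.1 van kampen} as stated does not apply directly --- you should either observe that the implication ``a diagram over the presentation with contour label $W$ exists $\Rightarrow W=1$ in $G$'' holds for arbitrary (not necessarily reduced) diagrams, or first replace $\Delta'$ by a cell-minimal circular diagram with the same contour label before invoking the lemma.
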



\subsection{Contiguity subdiagrams}
Let $p$ be a loop on a surface $X$ whose edges form the boundary of a subspace $Y\subset X$ homeomorphic to a disc. Then the restriction of a cell decomposition $\Delta$ of $X$ to $Y$ is a decomposition of $Y$, called a \emph{submap} of the map $\Delta$. 
By definition, a submap is always a circular submap.
A subdiagram of a diagram $\Delta$ is a submap $\Gamma$ of $\Delta$ whose edges bear the same labels as in $\Gamma$. It can be thought of as a circular diagram cut out from $\Gamma$.

Let $\Delta$ be a diagram over a presentation $\langle S \mid \calR \rangle$ and let $q$ be a subpath of $\partial \Delta$. Let $\pi,\Pi$ be $\calR$-cells in $\Delta$. Suppose that there is a simple closed path $p=s_1q_1s_2q_2$ in $\Delta$, where $q_1$ is a subpath of $\partial \pi$ and $q_2$ a subpath of $q$ (or of $\partial \Pi$), and $|s_1|,|s_2|$ are bounded by a small fixed constant. We denote by $\Gamma$ the subdiagram of $\Delta$ inscribed by $p$. If $\Gamma$ contains no cells of rank $\geq i$ then $\Gamma$ is called a \emph{contiguity subdiagram of rank $i$} of $\pi$ to the subpath $q$ of $\partial \Delta$ (or to $\Pi$, respectively). $q_1$ is called the \emph{contiguity arc} of $\pi$ to $q$ (respectively to $\Pi$), and the ratio $|q_1|/|\partial \pi|$ is called the \emph{contiguity degree} of $\pi$ to $q$ (resp. to $\Pi$) and is denoted by $(\pi,\Gamma,q)$ (resp. $(\pi,\Gamma,\Pi)$). We sometimes write $q_1=\Gamma\cap \pi$ and $q_2=\Gamma\cap q$ (resp. $q_2=\Gamma\cap \Pi)$).
For a detailed inductive construction of contiguity subdiagrams, we refer the reader to \cite[Section~14.2]{OL-book}. 

\section{The construction}\label{sec:construction and proofs}
Fix an alphabet $S=\{a,b,s_1,s_2,\dots,s_m\}$. 
Let $G(0)=F(S)$ be the free group over $S$, and set $\calX_0=\calR_0=\emptyset$. 
We now recursively define for any integer $i\geq 0$ finite sets $\calX_i,\calR_i$ of words over $S$ and a finitely presented group $$G(i)=\langle S \mid R=1 \ :\ R\in\bigcup _{j=0}^{j=i}\calR_j \rangle. $$



Let $i\geq 0$ and suppose that $G(i)$, and $\calX_l$ for any $l\leq i$, have already been defined. A word of $\calX_l$ is called a \textbf{period of rank $l$}.
\begin{definition}\label{def:simple}
A (nontrivial) word $W$ over $S$ is called \textbf{simple in rank $i$} if:
\begin{enumerate}[label=(S\arabic*)]
    \item $W$ is not conjugate in 
    $G(i)$ to any power $B^m$ of a period $B$ of rank $l\leq i$;
    \item $W$ is not conjugate in 
    $G(i)$ to a power of any word $C$ with $|C|<|W|$; and
    \item $W$ is not conjugate in 
    $G(i)$ to any word in the subgroup $\langle a,b\rangle$ of $G(i)$.\footnote{Note that $G(i)$ is a quotient of $F(S)=F(\{a,b,s_1,s_2,\dots,s_m\})$. By $\langle a,b\rangle$ we mean the subgroup of $G(i)$ generated by $a,b$.}
\end{enumerate}
\end{definition}
Let $\calX_{i+1}$ denote a maximal subset of words simple in rank $i$, of length $i+1$, and with the property that if $A,B\in \mathcal{X}_{i+1}$ with $A\neq B$ then $A$ is not conjugate in rank $i$ (that is, in $G(i)$) to $B$ or $B^{-1}$. 
Set $\calR_{i+1}$ to be $\{X^k \mid X\in \calX_{i+1}\}$, and set 
$$ G(i+1)=\langle S \mid R=1 \ :\ R\in\bigcup _{j=0}^{j=i+1}\calR_j \rangle. $$
Finally, set $\calR=\bigcup_{j=0}^{\infty} \calR_j$ and consider the group $G$ given by the graded presentation

\begin{equation}\label{eq:G=<S|bigcup R_i>}
    G=\langle S \mid R=1\ :\ R\in \calR=\bigcup_{j=0}^{\infty}\calR_j\rangle
\end{equation}

We comment that in Ol'shanskii's original construction of a Burnside group \cite[Chapter~6]{OL-book}, the definition of simple elements is slightly different. In fact, simple elements of \cite{OL-book} are elements that satisfy Items (S1) and (S2) above. This, in turn, yields a slightly different definition for a period.


\begin{lemma} \label{lem:torsion}
Let $g\in G$. Then either $g$ is conjugate in $G$ to an element from the subgroup $\langle a,b\rangle$ or $g^k=1$.
\end{lemma}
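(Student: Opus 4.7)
The plan is to proceed by strong induction on the conjugacy-length invariant
$$\ell(g) := \min\{\,|W| : W \in F(S), \ W \text{ is conjugate to } g \text{ in } G\,\}.$$
The base case $\ell(g)=0$ forces $g=1$ in $G$, so $g^k=1$ trivially.

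For the inductive step, fix $g$ with $\ell(g)=n\geq 1$ and pick a word $W$ of length $n$ conjugate to $g$ in $G$. Minimality forces $W$ to be cyclically reduced in $F(S)$, since otherwise cyclic reduction would produce a shorter representative. The main move is to test whether $W$ is simple in rank $n-1$ in the sense of Definition \ref{def:simple}, and to handle four cases. If condition (S3) fails, then $W$ is conjugate in $G(n-1)$, hence in $G$, to an element of $\langle a,b\rangle$, giving the free-subgroup alternative. If (S1) fails, $W$ is conjugate in $G(n-1)$ to $B^m$ for some period $B$ of rank $\leq n-1$; since $B^k\in\calR$, we have $B^k=1$ in $G$ and therefore $g^k=1$. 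If (S2) fails, $W$ is conjugate in $G(n-1)$ to $C^m$ for some word $C$ with $|C|<n$; letting $c$ denote the element of $G$ represented by $C$, one has $\ell(c)\leq |C|<n$, so the inductive hypothesis applied to $c$ yields either that $c$ is conjugate to an element of $\langle a,b\rangle$—in which case $g$, being conjugate to $c^m$, is too—or $c^k=1$, in which case $g^k$ is conjugate to $c^{mk}=1$ and hence equals $1$. Finally, if $W$ is simple in rank $n-1$, maximality of $\calX_n$ forces $W$ to be conjugate in $G(n-1)$ to some $X\in\calX_n\cup\calX_n^{-1}$ (otherwise $\calX_n\cup\{W\}$ would still satisfy the pairwise non-conjugacy property, contradicting maximality); since $X^k\in\calR_n$, this yields $g^k=1$ in $G$.

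The most delicate step is the (S2) branch, where the inductive hypothesis must be applied to $c$ rather than to $W$; this is precisely what forces the induction to be set up on the conjugacy invariant $\ell(\cdot)$ rather than on the length of a particular representative. The remaining branches are essentially bookkeeping and only require unpacking the definitions of periods, of simplicity, and of the maximality used to build each $\calX_n$, together with the fact that each period $B$ satisfies $B^k=1$ in $G$ by construction of $\calR$.
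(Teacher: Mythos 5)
Your proof is correct and follows essentially the same strategy as the paper's: a strong induction over a length invariant, testing the minimal word $W$ against the three simplicity conditions of Definition \ref{def:simple}, using (S1) and (S3) failures directly and applying the inductive hypothesis in the (S2) case, and finally invoking maximality of $\calX_n$ when $W$ is simple. The only (immaterial) variation is that you induct on the minimal word length over the conjugacy class, $\ell(g)$, whereas the paper inducts on the norm $\|g\|$ of a representative word, and you present the argument as direct casework rather than by contradiction.
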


\begin{proof}
This follows directly from the construction, but we prove it here by induction on the norm for the convenience of the reader. Indeed, suppose that an element $g\in G$ is not conjugate to any element in $\langle a,b\rangle$. Let $i=\|g\|$ be the norm of $g$, 
and let $W$ be a word of length $i$ representing $g$ in $G$. By assumption, $W$ satisfies Item (S3) in the definition of a simple element of rank $i-1$. If $W^k\neq 1$ in $G$, then $W$ also satisfies (S1) and (S2), since by induction hypothesis, any word $C$ with $|C|<|W|$ is either of order dividing $k$ or is conjugate to some element from $\langle a,b \rangle$. Therefore $W$ is simple of rank $i-1$, and so $\calX_{i}$ contains some $X$ that is conjugate to $W^{\pm1}$. But then $X^k\in \calR_i$ implies that $X^k=1$ in $G$, and therefore also
$W^k=1$, deriving a contradiction.
\end{proof}

Since all of the defining relations of $G$ are of the form $w^k$, we obtain:

\begin{corollary} \label{cor:quotient burnside}
There is a natural isomorphism $G/\left<\left< a,b \right> \right>\cong B(m,k)$.\footnote{$\langle \langle a,b\rangle \rangle$ denotes the normal closure of the subgroup $\langle a,b\rangle$.}
\end{corollary}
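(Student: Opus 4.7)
The plan is to exhibit mutually inverse homomorphisms between $Q:=G/\langle\langle a,b\rangle\rangle$ and $B(m,k)$. Both directions come from universal properties of presentations, and the only non-formal input is Lemma \ref{lem:torsion}, which will be used once.

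First, I would build a surjection $\pi\colon Q\twoheadrightarrow B(m,k)$. Define a map on the free group $F(S)$ by sending $a,b\mapsto 1$ and $s_i\mapsto s_i$. To see this descends to $G$, note that any relator $X^k\in\calR$ maps to $\bar X^k$ where $\bar X$ is the word obtained from $X$ by deleting occurrences of $a,b$; this equals the identity in $B(m,k)$ because $B(m,k)$ has exponent $k$. Since $a$ and $b$ map to $1$, this homomorphism further descends through $\langle\langle a,b\rangle\rangle$ to give $\pi$, which is surjective because its image contains the generators of $B(m,k)$.

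Next, I would build a homomorphism $\phi\colon B(m,k)\to Q$ in the reverse direction, sending each generator $s_i$ of $B(m,k)$ to $\bar s_i\in Q$. The only thing to verify is that every element of $Q$ has order dividing $k$, so that the Burnside relators are respected. This is exactly the content of Lemma \ref{lem:torsion}: for any $g\in G$, either $g$ is conjugate in $G$ to some element of $\langle a,b\rangle$, in which case its image in $Q$ is trivial, or else $g^k=1$ in $G$, in which case $\bar g^k=1$ in $Q$. Either way $\bar g^k=1$, so $Q$ satisfies $x^k=1$, and the universal property of the free Burnside group produces $\phi$.

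Finally, I would check that $\pi$ and $\phi$ are mutually inverse by checking agreement on generators: $\pi\circ\phi(s_i)=\pi(\bar s_i)=s_i$ and $\phi\circ\pi(\bar s_i)=\phi(s_i)=\bar s_i$, while $\bar a=\bar b=1$ in $Q$ are trivially preserved by $\phi\circ\pi$. Since these maps arose from the universal property, naturality is automatic. The step that would normally be the main obstacle, namely showing that $Q$ has exponent dividing $k$ (which is not a priori evident from the presentation of $G$, since the Burnside relators on $s_1,\dots,s_m$ are not among the relators of $G$), is already done for us by Lemma \ref{lem:torsion}; beyond that, the proof is purely formal.
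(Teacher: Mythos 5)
Your proof is correct and spells out precisely the two-directional argument that the paper leaves implicit: the defining relations of $G$ being $k$-th powers gives the surjection $\pi\colon Q\twoheadrightarrow B(m,k)$, while Lemma~\ref{lem:torsion} is the ingredient that makes $Q$ a group of exponent dividing $k$ (hence a quotient of $B(m,k)$). This is the same approach as the paper, just with the routine verifications written out.
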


\section{Condition $A$ for maps over Presentation \ref{eq:G=<S|bigcup R_i>}}\label{sec:condition A}

\subsection{Condition $A$ and $\gamma$-cells}

We now describe a certain condition on maps, defined by Ol'shanskii \cite[Section §15.2]{OL-book}, called \emph{Condition $A$}.
Intuitively, this condition should capture the special features of a map over a presentation whose relators are of the form $X^k$, $k$ large. 

The definition of an $A$-map (namely, a map that satisfies Condition $A$) and the facts on $A$-maps here to follow hold for a sufficiently large\footnote{That is, $k>10^{10}$.} odd integer $k$ along with small positive parameters,
$\alpha>\gamma>\epsilon>0$ defined in \cite{OL-book}.
We denote $\bar{\alpha}=\frac{1}{2}+\alpha,$ 
$\bar{\gamma}=1-\gamma$, and note that the parameters are chosen such that $\bar{\alpha}+\epsilon<\bar{\gamma}$. 

\begin{definition}[{Condition A}]\label{def:condition A} \label{def_A}
A graded map $\Delta$ is called an \emph{$A$-map} if:
\begin{enumerate}[label=(A\arabic*)]
        \item The contour $\partial\Pi$ of any cell $\Pi$ of rank $j$ is cyclically reduced (that is, does not contain a subpath of the form $ee^{-1}$) and $|\partial\Pi|\geq kj$.
        \item Any subpath of length $\leq \max(j,2)$ of the contour of any cell of rank $j$ is geodesic in $\Delta$.
        \item If $\pi,\Pi$ are two cells in $\Delta$ and $\Gamma$ is a contiguity submap of $\pi$ to $\Pi$ with $(\pi,\Gamma,\Pi)\geq \epsilon$, then $|\Gamma\cap\Pi|<(1+\gamma)\cdot r(\Pi)$.
    \end{enumerate}
\end{definition}

Recall that in a map $\Delta$ over a graded presentation, the rank of a cell $\Pi$
is $r(\Pi)=i\geq 1$ if the label of the contour of $\Pi$ is equal in $F(S)$ to a word from $\calR_i$, or its inverse, or a cyclic conjugate of either. 
We will now show that any map over the graded presentation \ref{eq:G=<S|bigcup R_i>} 
is an $A$-map.  Indeed, as any rank $j$ relator in our construction is of the form $B^k$ for a period $B$ of length $j$, $(A1)$ is satisfied; $(A2)$ is satisfied since the periods of rank $j$ are chosen as the shortest words among their conjugates in rank $j-1$. Property $(A3)$ generalizes a certain observation on periodic words, see \cite[Section §15.2]{OL-book}.

\begin{lemma}[{Analogous to \cite[Lemma 19.4]{OL-book}}]\label{lem:our 19.4 the construction has property A}
Every reduced diagram of finite rank over Presentation \ref{eq:G=<S|bigcup R_i>} is an $A$-map. 
\end{lemma}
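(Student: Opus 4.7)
The plan is to adapt Ol'shanskii's proof of \cite[Lemma 19.4]{OL-book} to our setting. Since our construction differs from his only by the additional clause (S3) in the definition of simplicity---which further restricts the set of eligible periods---most of Ol'shanskii's arguments should transfer with minor bookkeeping modifications. I would proceed by simultaneous induction on the rank, jointly with a package of auxiliary statements about $G(i)$: that distinct periods of rank $\leq i$ are pairwise non-conjugate in $G(i-1)$, that the defining power of a period of rank $j$ cannot be reduced in $G(j-1)$, and certain geodesicity and length bounds on subpaths of cell contours---exactly as in Ol'shanskii's Chapter~6. The base case $i=0$ is vacuous.

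For the inductive step, (A1) follows quickly from the construction: every relator of $\calR_j$ has the form $X^k$ with $|X|=j$, so any rank-$j$ cell $\Pi$ satisfies $|\partial\Pi|\geq kj$, and the cyclic reducedness of $\partial\Pi$ follows from clause (S2) applied to the period $X$ (otherwise $X$ would be conjugate in $G(j-1)$ to a shorter word). For (A2), if some subpath of length $\leq \max(j,2)$ of a rank-$j$ cell's contour failed to be geodesic in $\Delta$, one could splice in a shorter path to produce a cyclic conjugate of the period equal in $G(j-1)$ to a strictly shorter word, again contradicting (S2); passing between equality in $\Delta$, in $G$, and in $G(j-1)$ is permitted by the inductive auxiliary statements. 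The essential work is in (A3): arguing by contradiction, if cells $\pi,\Pi$ admit a contiguity subdiagram $\Gamma$ of strictly smaller rank with $(\pi,\Gamma,\Pi)\geq \epsilon$ but $|\Gamma\cap\Pi|\geq (1+\gamma)r(\Pi)$, then the label of $\Gamma\cap\Pi$ contains a full period of $\Pi$ as a subword, and $\Gamma$ itself witnesses an equation in $G(i-1)$ between the periodic labels read along $\partial\Pi$ and $\partial\pi$. The induction hypothesis that $\Gamma$ is itself an $A$-map, combined with Ol'shanskii's standard periodicity lemmas for $A$-maps, forces a relation that contradicts either the pairwise non-conjugacy of distinct periods at the relevant ranks, or the simplicity clauses (S1)--(S2) of the period associated with $\pi$ or $\Pi$.

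The main obstacle is that our restricted notion of period (via (S3)) leaves open the possibility that a word $W$ which is simple in Ol'shanskii's sense but conjugate into $\langle a,b\rangle$ is never killed by a relator of our presentation, so such conjugacy classes survive forever and could a priori disturb the inductive auxiliary statements (for instance, conjugacy tests of candidate periods need to account for the fact that $\langle a,b\rangle$ embeds freely into $G(i)$ for all $i$). The resolution, carried out in the appendix, is to verify that each of the auxiliary structural statements in Ol'shanskii's induction depends only on clauses (S1) and (S2), so that the inductive machinery still closes when (S3) further trims the period sets; words excluded by (S3) simply behave like elements of the free subgroup $\langle a,b\rangle$, and the contiguity analysis is unaffected by their persistence because they do not contribute cells to any diagram over Presentation \ref{eq:G=<S|bigcup R_i>}.
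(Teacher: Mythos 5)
Your proposal is correct and takes essentially the same route as the paper: reduce to Ol'shanskii's proof of Lemma 19.4 by observing that it uses only properties (S1) and (S2) of simple words together with a small list of structural facts about periods (that relators have the form $A^k$ with $|A|$ equal to the rank, that periods are simple in all lower ranks, that periods are shortest in their lower-rank conjugacy class, and that distinct periods are not conjugate in lower rank), all of which you correctly verify for Presentation \ref{eq:G=<S|bigcup R_i>}. The one place where your account is a bit too quick is the final claim that words conjugate into $\langle a,b\rangle$ ``do not contribute cells'' and therefore ``the contiguity analysis is unaffected''; that is necessary but not quite sufficient. In the simultaneous induction, the trichotomy of Lemma~\ref{lem:ours 18.1} introduces a genuinely new third alternative (a word conjugate into $\langle a,b\rangle$), and at one point in the chain (Step~3 of the adaptation of Ol'shanskii's Lemma~18.6, our Lemma~\ref{lem:our lem 18.6}) one must additionally show that a cyclic boundary section labeled by a word in $F(a,b)$ is a smooth section; this is supplied not by the $A$-map machinery but by the convexity statement Lemma~\ref{lem:K is convex}. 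Your parenthetical remark that ``conjugacy tests of candidate periods need to account for the fact that $\langle a,b\rangle$ embeds freely'' hints at exactly this, so the gap is more in articulation than in substance.
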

\begin{proof}
Ol'shanskii's Lemma 19.4 in \cite{OL-book} states that every reduced diagram of finite rank over the graded presentation brought in \cite[Section §18]{OL-book} is an $A$-map. Although Lemma 19.4 \cite{OL-book} is formulated for a specific presentation, its proof uses only properties $(S1)$ and $(S2)$ of Definition \ref{def:simple}, and the following four properties of the presentation:
    \begin{enumerate}[label=(P\arabic*)]
        \item Every relator of rank  $i$ is of the form $A^k$ for a period $A$ of rank $i$ and $|A|=i$;
        \item A period of rank $i$ is simple in any rank less than $i$;
        \item A period of rank $i$ is the shortest word among its conjugates in rank $i-1$;
        \item For two periods $A,B$ of ranks $i,j$ respectively, $A$ is not conjugate to $B^{\pm 1}$ in any rank smaller than $\min(i,j)$, unless $A\equiv B$.
    \end{enumerate}
It therefore suffices to show that Presentation \ref{eq:G=<S|bigcup R_i>} satisfies $(P1)-(P4)$. The deduction of the conclusion from these four properties works precisely as Ol'shanskii's proof of Lemma 19.4 in \cite{OL-book}.
We now verify properties $(P1)-(P4)$.

Property $(P1)$ follows directly from the definitions of $\calX_i$, $\calR_i$.

For $(P2)$, observe that a simple element of rank $i-1$ is in particular simple in any rank $j\leq i-1$. $(P2)$ now follows directly from the fact that $\calX_i$ contains only elements that are simple in rank $i-1$.

$(P3)$ follows immediately from $(P2)$. Indeed, a period of rank $i$ is simple in any rank $<i$, so it is not conjugate in $G(i-1)$ to any shorter word by Item (S2) of Definition \ref{def:simple}.

For $(P4)$ assume without loss of generality that $i\leq j$. If $i<j$, then either $B$ or $B^{\pm 1}$ is conjugate in $G(i)$ to the period $A$, and so $B$ is not simple in rank $j$, contradicting $(P2)$. If $i=j$, then $A\equiv B$ by definition of the set $\calX_i$, concluding the proof of $(P4)$.
\end{proof}
For completeness, a full proof of Lemma \ref{lem:our 19.4 the construction has property A} in brought in the Appendix, see Lemma \ref{lem:lem 19.4}

Properties $(A2)$ and $(A3)$ of subpaths of contours of cells from Definition \ref{def:condition A} extend, when discussing contiguity submaps, to sections of contours of these submaps. 
\begin{definition}
A (cyclic) section $q$ of a contour of a map $\Delta$ is called a \emph{smooth section of rank $r>0$} (we write $r(q)=r)$ if:
\begin{enumerate}
    \item Every subpath of length $\leq \max(r,2)$ of $q$ is geodesic in $\Delta$; and
    \item For each contiguity submap $\Gamma$ of a cell $\pi$ to $q$ satisfying $(\pi,\Gamma,q)\geq \epsilon$, we have $|\Gamma\cap q|<(1+\gamma)r$.
\end{enumerate}
\end{definition}
Intuitively this means that the section $q$ looks locally like a section in the contour of a cell of rank $r$. To avoid ambiguity, for a given section $q$ we let its rank be the minimal $r$ for which this definition is satisfied.

The next three lemmas about $A$-maps are taken from \cite{OL-book}.

\begin{lemma}[{\cite[Lemma~15.8]{OL-book}}]\label{lem:contiguity degree to smooth section is always small}
    In an arbitrary $A$-map $\Delta$, the degree of contiguity of an arbitrary cell $\pi$ to an arbitrary cell $\Pi$ or to an arbitrary smooth section $q$ of the contour across an arbitrary contiguity submap $\Gamma$ is smaller than $\bar{\alpha}$.
\end{lemma}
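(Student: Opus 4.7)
The plan is to follow Ol'shanskii's original proof of this lemma, which is a foundational structural result in the theory of $A$-maps and does not depend on the particular graded presentation under consideration; it is a purely combinatorial statement about maps satisfying Condition $A$. I would proceed by contradiction: assume $(\pi,\Gamma,\Pi)\geq \bar\alpha$ and derive a contradiction via length comparisons along the two long sides of the contiguity submap $\Gamma$. Write $\partial\Gamma=s_1q_1s_2q_2$, where $q_1=\Gamma\cap\pi$, $q_2=\Gamma\cap\Pi$, and $|s_1|,|s_2|$ are bounded by a universal constant coming from the definition of contiguity submap. (The argument for a smooth section $q$ in place of a cell $\Pi$ is completely analogous, using the two defining properties of a smooth section in place of (A2) and (A3).)

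The first step is to apply (A3): since the contiguity degree from $\pi$ is $\geq \bar\alpha>\epsilon$, condition (A3) yields the upper bound $|q_2|<(1+\gamma)\,r(\Pi)$. The second step, which is the technical core, is to establish a comparison $|q_1|\leq|q_2|+C$ for a universal constant $C$. I would set this up by choosing a counterexample minimizing the rank $r(\Gamma)$ first and the number of cells of $\Gamma$ second; this makes the inductive hypothesis available to every contiguity submap strictly smaller than $\Gamma$, forcing $\Gamma$ to behave as a ``thin strip'' between $q_1$ and $q_2$. Combining this with (A2), which gives geodesic control on subpaths of $\partial\pi$ of length $\leq\max(r(\pi),2)$, the two long sides of $\Gamma$ must then have lengths comparable up to an additive constant absorbed into $|s_1|+|s_2|$.

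The third step is to close the argument. Combining the two previous steps gives $|q_1|<(1+\gamma)\,r(\Pi)+C$, while by the hypothesis $|q_1|\geq \bar\alpha\,|\partial\pi|\geq \bar\alpha k\,r(\pi)$ via (A1). If $r(\pi)\geq r(\Pi)$ this already forces $\bar\alpha k\leq 1+\gamma+o(1)$, which is impossible for $k>10^{10}$ and the fixed small parameters $\alpha,\gamma$. If instead $r(\pi)<r(\Pi)$, I would swap the roles of $\pi$ and $\Pi$: the length comparison together with (A1) gives a lower bound on the dual contiguity degree $(\Pi,\Gamma,\pi)$ sufficient to trigger (A3) in the opposite direction, yielding $|q_1|<(1+\gamma)\,r(\pi)$, which again contradicts $|q_1|\geq \bar\alpha k\,r(\pi)$.

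The main obstacle is the second step: the length comparison between $q_1$ and $q_2$ is delicate and rests on a careful inductive analysis of the interior geometry of $\Gamma$. It is essentially here that the full force of the $A$-map axioms is used, and in Ol'shanskii's book this step is supported by a chain of preparatory lemmas about geodesic behavior in low-rank $A$-maps; reproducing those estimates in our setting is the only subtle point, but since Lemma \ref{lem:our 19.4 the construction has property A} already establishes that our diagrams are $A$-maps, the entire chain of reasoning applies verbatim.
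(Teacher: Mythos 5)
The paper cites this lemma verbatim from Ol'shanskii (\cite[Lemma~15.8]{OL-book}) and gives no proof — it is a general structural fact about $A$-maps, and once Lemma~\ref{lem:our 19.4 the construction has property A} establishes that diagrams over Presentation~\ref{eq:G=<S|bigcup R_i>} are $A$-maps, the lemma applies as a black box. You correctly recognize this, and at that level your proposal is fine. However, your attempt to reconstruct Ol'shanskii's argument contains two concrete errors that would prevent it from closing if taken at face value.

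First, the length comparison in your second step is asserted in the form $|q_1|\leq|q_2|+C$ with a \emph{universal} additive constant $C$. That is not what the preparatory lemmas give. The relevant estimate in \cite{OL-book} is multiplicative, of the shape $|q_2|>(1-2\beta)|q_1|$ (this is essentially Lemma~15.6 there), and it is obtained from a genuine inductive analysis of the cells that may sit \emph{inside} $\Gamma$; a contiguity submap of positive rank can contain many cells, and their contribution cannot be absorbed into the bounded side arcs $s_1,s_2$. Your framing suggests the interior of $\Gamma$ contributes only a bounded defect, which is exactly what the induction has to rule out rather than assume.

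Second, and more seriously, the fallback argument for the case $r(\pi)<r(\Pi)$ does not work. You propose swapping the roles of $\pi$ and $\Pi$ and applying (A3) in the opposite direction, claiming the lower bound from (A1) forces $(\Pi,\Gamma,\pi)\geq\epsilon$. But in fact the opposite happens: applying (A3) once already gives $|q_2|<(1+\gamma)r(\Pi)$, and combined with (A1) this yields $(\Pi,\Gamma,\pi)=|q_2|/|\partial\Pi|<(1+\gamma)/k$, which for $k>10^{10}$ is far below $\epsilon$. So (A3) simply cannot be triggered in the reverse direction, and the case $r(\pi)<r(\Pi)$ needs to be handled by the same inequality chain rather than by a symmetric appeal to (A3). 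Finally, your closing remark — that because our diagrams are $A$-maps ``the entire chain of reasoning applies verbatim'' — gets the logic backwards: Lemma~15.8 is proved for arbitrary $A$-maps, so the fact that our diagrams are $A$-maps is what lets us \emph{apply} it, not what is needed to \emph{prove} it.
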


\begin{lemma}[{\cite[Lemma~16.1]{OL-book}}]\label{lem:Greendlinger-like lemma}
Let $\Delta$ be a circular $A$-map, $r(\Delta)>0$, whose contour is decomposed into four sections $q^1,q^2,q^3,q^4$. Then there is an $\calR$-cell $\pi$ and disjoint contiguity submaps $\Gamma_1,\Gamma_2,\Gamma_3,\Gamma_4$ of $\pi$ to $q^1,q^2,q^3,q^4$, respectively, in $\Delta$ (some of the latter may be absent) such that: 
$$\Sigma_{i=1}^{4}(\pi,\Gamma_i,q^i)>\bar{\gamma}.$$
\end{lemma}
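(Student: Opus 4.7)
The plan is to establish this via a combinatorial discharging argument on $\Delta$, combining the defining conditions $(A1)$–$(A3)$ with the universal contiguity bound of Lemma~\ref{lem:contiguity degree to smooth section is always small}. This is the four-section version of a Greendlinger-type lemma for $A$-maps; the one-section analogue being standard, the difficulty is to distribute a single cell's contiguity across the four arcs $q^1,\ldots,q^4$ and to control what happens at the corners where consecutive sections meet.

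First I would proceed by induction on the number of $\calR$-cells of $\Delta$, keeping the statement available for any proper $A$-subdiagram obtained by cutting $\Delta$ along a simple interior path. The base case of a single cell $\pi$ is immediate: by $(A1)$–$(A2)$, $\partial\pi$ runs essentially along $\partial\Delta$, and splitting the corresponding contiguity submap at the four section-joints produces $\Gamma_1,\ldots,\Gamma_4$ (some possibly empty) whose contiguity degrees sum to nearly $1$. For the inductive step I would argue by contradiction: assume every $\calR$-cell $\pi$ of $\Delta$ has $\sum_{i=1}^{4}(\pi,\Gamma_i,q^i)\le\bar\gamma$ for every maximal disjoint choice of $\Gamma_1,\ldots,\Gamma_4$. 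Then at least a $\gamma$-fraction of $\partial\pi$ must be bound to other cells; by Lemma~\ref{lem:contiguity degree to smooth section is always small} each cell-to-cell contiguity has degree $<\bar\alpha$, and by $(A3)$ its partner arc is also short.

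I would then form a planar bond graph inscribed in $\Delta$ whose vertices are the $\calR$-cells together with the four sections, and whose edges are a fixed maximal system of pairwise disjoint contiguity submaps. The heart of the argument is a discharging inequality: weighting each face by the boundary length it consumes from adjacent cells and summing two ways, the bounds $<\bar\alpha$ on cell-to-cell contiguities together with the standing hypothesis $\le\bar\gamma$ on cell-to-boundary contiguities force the total consumed length to fall short of $\sum_{\Pi}|\partial\Pi|$, which by $(A1)$ is at least $k\sum_\Pi r(\Pi)$. Combined with the disc Euler relation $|V|-|E|+|F|=1$, the slack $\bar\gamma-\bar\alpha-\epsilon>0$ yields a numerical contradiction, finishing the induction. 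The main obstacle I foresee is the bookkeeping at the corners of $\partial\Delta$: a contiguity submap may straddle two adjacent sections, and deciding how to split it between $q^i$ and $q^{i+1}$---possibly by trimming near the corner or allowing it to contribute to both sides---is where the condition $\bar\alpha+\epsilon<\bar\gamma$ is used most delicately.
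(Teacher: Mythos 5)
This lemma is quoted verbatim from Ol'shanskii's book (\cite[Lemma~16.1]{OL-book}); the paper supplies no proof of its own, only the citation, so strictly speaking there is nothing of the paper's to compare against. The relevant comparison is therefore with Ol'shanskii's own argument in \cite[\S\S15--16]{OL-book}.

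Your sketch has the right general flavor --- a planar curvature/discharging argument, using $(A1)$--$(A3)$ and the universal bound $<\bar\alpha$ on cell-to-cell contiguity, with Euler's formula supplying the global planarity constraint. That is indeed the backbone of Ol'shanskii's proof. But as written there is a genuine gap: the ``discharging inequality'' is asserted rather than derived, and it is precisely there that the substance of the argument lives. In \cite{OL-book} the inequality does not fall out of a one-shot counting; it is obtained by first constructing a \emph{maximal system of bonds and contiguity submaps} (not merely a maximal pairwise-disjoint collection --- the system must in addition saturate prescribed fractions of each cell boundary and satisfy compatibility constraints), then passing to the associated \emph{estimating graph} with its weight function $\nu$, and only then invoking planarity. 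Without that machinery, the statement ``weighting each face by the boundary length it consumes and summing two ways \ldots forces the total consumed length to fall short'' is a hand-wave: it is not clear what is being weighted, why the weights of inner edges are controlled, or how the $0$-cells and bonds of length $\le\zeta n$ enter the count. Your proposed induction on the number of $\calR$-cells is also not how Ol'shanskii structures this particular lemma --- the induction in \cite[\S\S15--16]{OL-book} is on the rank of the map and is used to establish the contiguity-submap machinery, not Lemma~16.1 itself, which then follows from the estimating-graph estimate in essentially one step. Finally, your corner issue (a contiguity submap straddling two adjacent sections) is real, but it is resolved in \cite{OL-book} by the very definition of the decomposition and the bond system, not by ``trimming near the corner,'' so it would need to be handled structurally rather than ad hoc.
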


\begin{corollary}[{\cite[Corollary~16.2]{OL-book}}]\label{lem:annular Greendlinger-like lemma}
    Let $\Delta$ be an annular $A$-map with contours $q^1$ and $q^2$ (regarded as cyclic sections) and $r(\Delta)>0$. Then $\Delta$ has an $\calR$-cell $\pi$ and disjoint contiguity submaps $\Gamma_1$ and $\Gamma_2$ of $\pi$ to $q^1$ and $q^2$, respectively, (one of these may be absent) such that $(\pi,\Gamma_1,q^1)+(\pi,\Gamma_2,q^2)>\bar{\gamma}.$
\end{corollary}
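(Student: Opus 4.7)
The plan is to reduce the annular statement to the circular case of Lemma \ref{lem:Greendlinger-like lemma} by cutting the annulus into a disc along a carefully chosen arc, and then eliminating any contributions coming from the cut.

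First I would choose a simple path $t$ in the $1$-skeleton of $\Delta$ joining a vertex of $q^1$ to a vertex of $q^2$ of minimal possible length; such a $t$ is automatically geodesic in $\Delta$. Cutting $\Delta$ along $t$ produces a circular $A$-map $\tilde{\Delta}$ of the same rank whose boundary decomposes into four consecutive sections $q^1, t, q^2, t^{-1}$. Since $t$ lies in the $1$-skeleton, cells of $\Delta$ are not split, ranks are preserved, and each of the $A$-map axioms $(A1)$--$(A3)$ passes from $\Delta$ to $\tilde\Delta$, so Lemma \ref{lem:Greendlinger-like lemma} applies.

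Next I would apply Lemma \ref{lem:Greendlinger-like lemma} to $\tilde\Delta$ with this four-section decomposition to obtain an $\calR$-cell $\pi$ together with disjoint contiguity submaps $\Gamma^1, \Gamma^t, \Gamma^2, \Gamma^{t^{-1}}$ (some possibly absent) whose contiguity degrees satisfy
\[
(\pi,\Gamma^1,q^1) + (\pi,\Gamma^t,t) + (\pi,\Gamma^2,q^2) + (\pi,\Gamma^{t^{-1}},t^{-1}) > \bar\gamma.
\]
Because $\pi$ is an embedded $2$-cell in the annulus $\Delta$, it sits on one definite side of the arc $t$, so at least one of $\Gamma^t, \Gamma^{t^{-1}}$ is absent automatically.

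The crux is to verify that any remaining contiguity submap of $\pi$ along the cut contributes $0$ to the above sum. Suppose $\Gamma^t$ is present; its bounding loop has the form $s_1 q_1 s_2 q_2$ with $q_1 \subset \partial\pi$, $q_2 \subset t$, and $|s_1|, |s_2|$ uniformly bounded. The subpath $q_2$ of $t$ then admits an alternative routing in $\Delta$ along $s_1 q_1 s_2$ (or along the complementary arc of $\partial \pi$). If $(\pi, \Gamma^t, t) \geq \epsilon$, length estimates using $(A1)$--$(A3)$, the uniform bound on $|s_1|, |s_2|$, and the contiguity bound $\bar\alpha$ of Lemma \ref{lem:contiguity degree to smooth section is always small} force such a detour to be strictly shorter than $q_2$, contradicting the minimality of $|t|$. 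Hence $(\pi, \Gamma^t, t) = 0$, and symmetrically for $\Gamma^{t^{-1}}$. The submaps $\Gamma^1, \Gamma^2$ then transport back from $\tilde\Delta$ to $\Delta$ as disjoint contiguity submaps of $\pi$ to $q^1$ and $q^2$, respectively, giving the desired inequality $(\pi,\Gamma_1,q^1) + (\pi,\Gamma_2,q^2) > \bar\gamma$.

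The principal obstacle is the third step: one must ensure that the minimality of $t$ genuinely rules out any positive contiguity to $t$. This requires a delicate length comparison between the geodesic subpath $q_2 \subset t$ and the detour along $\partial\pi$, in the spirit of the standard small-cancellation estimates of \cite{OL-book}; all other steps are straightforward reductions.
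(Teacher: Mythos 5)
The paper imports this result verbatim from Ol'shanskii (\cite[Corollary~16.2]{OL-book}) and gives no proof of its own, so I will assess your argument on its own terms. Your reduction—cut the annulus along a shortest simple arc $t$ joining the two contours, apply Lemma \ref{lem:Greendlinger-like lemma} to the resulting disc $\tilde\Delta$ with the decomposition $q^1\, t\, q^2\, t^{-1}$, and then eliminate the cut contributions—is the natural approach to take.

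However, the step you yourself flag as the crux does not go through as sketched, and this is a genuine gap. First, the remark that at least one of $\Gamma^t$, $\Gamma^{t^{-1}}$ is absent ``because $\pi$ sits on one side of $t$'' is incorrect: in the cut-open disc $\tilde\Delta$, the two copies of $t$ are distinct arcs of $\partial\tilde\Delta$, and an interior cell $\pi$ can perfectly well admit disjoint contiguity submaps to both of them. Second, and more importantly, geodesicity of $t$ alone does not force $(\pi,\Gamma^t,t)<\epsilon$. Writing $\theta=(\pi,\Gamma^t,t)$, $q_1=\Gamma^t\cap\pi$, $q_2=\Gamma^t\cap t$, the standard contiguity estimates make $|q_2|$ comparable to $|q_1|=\theta|\partial\pi|$, while minimality of $t$ gives $|q_2|\leq |s_1|+|s_2|+(1-\theta)|\partial\pi|$ by detouring along the complementary arc of $\partial\pi$; combining these two produces a contradiction only when $\theta$ exceeds roughly $\tfrac{1}{2}$, so the shortcut argument yields no improvement over the bound $\theta<\bar\alpha$ already provided by Lemma \ref{lem:contiguity degree to smooth section is always small}. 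Since each of the two cut terms can therefore be of size $\approx\tfrac{1}{2}$, their sum alone may already exceed $\bar\gamma$, and the four-term inequality from Lemma \ref{lem:Greendlinger-like lemma} then gives no lower bound at all on $(\pi,\Gamma^1,q^1)+(\pi,\Gamma^2,q^2)$. A complete proof along these lines needs a sharper mechanism for controlling contiguity to the cut than geodesicity of $t$—for instance a more careful choice of $t$, or an argument exploiting the cyclic arrangement of the four contiguity arcs on $\partial\pi$—and this is precisely what your sketch leaves out.
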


Any cell $\pi$ as in Theorems \ref{lem:Greendlinger-like lemma} and \ref{lem:annular Greendlinger-like lemma} is called a \emph{$\gamma$-cell} in $\Delta$. Intuitively, these theorems say that almost all of the contour of a $\gamma$-cell consists of outer arcs.

Finally, we will use the following claim, that is proved by Ol'shanskii as part of his proof of \cite[Theorem 16.2]{OL-book}.
\begin{lemma}\label{lem:if Gamma is with minimal num of cells then r(Gamma)=0}
    Let $\Delta$ be an $A$-map with $r(\Delta)>0$ and let $\Gamma$ be a contiguity submap of an $\calR$-cell $\Pi$ to a section $q$ of the contour of $\Delta$ such that $(\Pi,\Gamma,q)\geq \epsilon$. Suppose that $\Pi$ is chosen in such a way that the number of cells in $\Gamma$ is minimal, then $r(\Gamma)=0$.
\end{lemma}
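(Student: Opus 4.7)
The plan is to argue by contradiction: assume $r(\Gamma)>0$ and exhibit an $\calR$-cell $\pi'\in\Gamma$ which is contiguous to $q$ via a contiguity submap strictly smaller than $\Gamma$ (in number of $\calR$-cells), contradicting the minimality hypothesis.

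First, regard $\Gamma$ as a circular $A$-map (it is one, as a subdiagram of the $A$-map $\Delta$). Its contour decomposes into four sections: the contiguity arc $q_1:=\Gamma\cap\Pi\subset\partial\Pi$, the contiguity arc $q_2:=\Gamma\cap q\subset q$, and the two ``side arcs'' $s_1,s_2$, which by definition of contiguity submap have lengths bounded by a small universal constant. Apply the Greendlinger-like lemma (Lemma \ref{lem:Greendlinger-like lemma}) to this four-section decomposition of $\partial\Gamma$ to obtain an $\calR$-cell $\pi'\in\Gamma$ together with disjoint contiguity submaps $\Gamma_1,\Gamma_3,\Gamma_2,\Gamma_4$ of $\pi'$ to $q_1,s_1,q_2,s_2$ respectively, satisfying
\[
(\pi',\Gamma_1,q_1)+(\pi',\Gamma_2,q_2)+(\pi',\Gamma_3,s_1)+(\pi',\Gamma_4,s_2)>\bar\gamma.
\]

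Next, I will bound the three ``unwanted'' degrees. Since $q_1\subset\partial\Pi$, the submap $\Gamma_1$ is also a contiguity submap of $\pi'$ to the cell $\Pi$ (inside $\Delta$), so by Lemma \ref{lem:contiguity degree to smooth section is always small} we have $(\pi',\Gamma_1,q_1)=(\pi',\Gamma_1,\Pi)<\bar\alpha$. The side arcs $s_1,s_2$ have length bounded by a fixed constant, while $|\partial\pi'|\geq k\cdot r(\pi')\geq k$; hence for $k$ large enough each of $(\pi',\Gamma_3,s_1)$ and $(\pi',\Gamma_4,s_2)$ is at most $\epsilon$. Substituting these bounds yields
\[
(\pi',\Gamma_2,q_2)>\bar\gamma-\bar\alpha-2\epsilon\geq\epsilon,
\]
where the last inequality holds by the standard choice of parameters (the relation $\bar\alpha+\epsilon<\bar\gamma$ is strengthened in the Ol'shanskii regime to $\bar\gamma-\bar\alpha>3\epsilon$).

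Finally, $\Gamma_2$ is also a contiguity submap of $\pi'$ to $q$ when viewed inside $\Delta$, of degree at least $\epsilon$. Because $\pi'$ is a cell of $\Gamma$ lying on the boundary of $\Gamma_2$ (and hence not counted among the cells of $\Gamma_2$), while every cell of $\Gamma_2\subset\Gamma$ is a cell of $\Gamma$, we have a strict inequality on the number of $\calR$-cells: $|\Gamma_2|\leq|\Gamma|-1$. This contradicts the minimality of $|\Gamma|$ over the choice of $\Pi$, so $r(\Gamma)=0$.

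The main obstacle, which is technical rather than conceptual, is the bound on the contiguity degrees to the short side arcs $s_1,s_2$; this is the step that forces $k$ to be taken large relative to the constant bounding $|s_j|$, and it is also the place where one has to invoke the fact that $|\partial\pi'|$ is bounded below by the rank of $\pi'$ times $k$ (condition (A1)). Once these degrees are absorbed into the $2\epsilon$ error, the argument is a clean application of Lemmas \ref{lem:contiguity degree to smooth section is always small} and \ref{lem:Greendlinger-like lemma} together with the minimality hypothesis.
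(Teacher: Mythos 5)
Your argument reconstructs, correctly in its essentials, what Ol'shanskii does: the paper itself does not reprove this claim but defers to the proof of \cite[Corollary 16.2]{OL-book}, and the Greendlinger-type contradiction you set up is indeed the standard mechanism there. Regarding $\Gamma$ as a circular $A$-map, applying Lemma \ref{lem:Greendlinger-like lemma} to the decomposition $s_1q_1s_2q_2$, controlling the contiguity of the resulting $\gamma$-cell $\pi'$ to $q_1\subset\partial\Pi$ by Lemma \ref{lem:contiguity degree to smooth section is always small}, absorbing the contiguity to the side arcs into a small error, and reading off a new contiguity submap $\Gamma_2$ of $\pi'$ to $q$ that is strictly smaller than $\Gamma$ --- this is the right chain of reasoning, and the final count $|\Gamma_2|\le|\Gamma|-1$ (since $\pi'\in\Gamma\setminus\Gamma_2$) correctly produces the contradiction with minimality over the choice of $\Pi$.

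The one place that is glossed over, and which you correctly flag as the technical crux, is the bound $(\pi',\Gamma_j,s_i)\le\epsilon$. The paper's summary description of contiguity submaps says $|s_1|,|s_2|$ are ``bounded by a small fixed constant,'' but in Ol'shanskii's actual inductive construction via bonds (\cite[\S14]{OL-book}) the side arcs have length controlled in terms of the ranks involved (of order $\zeta\cdot\mathrm{rank}$), not by an absolute constant; and even granting a bound on $|s_i|$, translating that into a bound on the contiguity arc $|\Gamma_j\cap\pi'|$ of $\pi'$ uses a geodesic estimate coming from condition (A2) and the associated small-perimeter lemmas. Your parameter inequality $\bar\gamma-\bar\alpha>3\epsilon$ is comfortably true in Ol'shanskii's regime ($\bar\gamma-\bar\alpha=\tfrac12-\alpha-\gamma$ with $\alpha,\gamma,\epsilon$ all tiny), so once the side-arc contiguities are controlled the arithmetic is fine. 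In short: correct approach, matching what the cited source does, with the expected technical debt at the side-arc estimate.
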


\subsection{The subgroup $\langle a,b \rangle$}


The fact that every map over Presentation \ref{eq:G=<S|bigcup R_i>} is an $A$-map allows us to analyse in detail a certain convexity-like behaviour of the subgroup $\langle a,b\rangle$. The convexity-like behaviour enables us to bound the asymptotic density of the set of non-torsion elements in $G$. The rest of this section is dedicated to the analysis of the geometry of the subgroup $\langle a,b\rangle$.


The following lemma formalizes the fact that relators from $\calR$ have very small coincidence with words over the alphabet $\{a,b\}$.

\begin{lemma}\label{lem:contiguity degree to <a,b> is small}
    Let $\Delta$ be a reduced diagram over Presentation \ref{eq:G=<S|bigcup R_i>} and $\Pi$ a cell in $\Delta$. Let $q$ be a section of the contour of $\Delta$ such that the label $\phi(q)$ is a word from $F(a,b)$. Then any contiguity submap $\Gamma$ of $\Pi$ to $q$ has contiguity degree $(\Pi,\Gamma,q)< \epsilon$.
\end{lemma}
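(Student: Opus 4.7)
The plan is a proof by contradiction. Suppose that some cell $\Pi$ in $\Delta$ and some contiguity submap $\Gamma$ of $\Pi$ to $q$ satisfy $(\Pi,\Gamma,q)\geq \epsilon$. By Lemma \ref{lem:our 19.4 the construction has property A}, $\Delta$ is an $A$-map, so among all such pairs we can pick one for which $\Gamma$ has the smallest number of cells; Lemma \ref{lem:if Gamma is with minimal num of cells then r(Gamma)=0} then gives $r(\Gamma)=0$, i.e., $\Gamma$ has no $\calR$-cells. Write $\partial\Gamma=s_1 q_1 s_2 q_2$, where $q_1\subset\partial\Pi$ is the contiguity arc on $\Pi$, $q_2\subset q$ is the contiguity arc on $q$, and $|s_1|,|s_2|\leq C$ for some absolute constant $C$.

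The fact that $\Gamma$ has no cells means that its boundary label reduces to the identity in the free group, giving
\[
\phi(q_1)=\phi(s_1)^{-1}\phi(q_2)^{-1}\phi(s_2)^{-1}\quad\text{in }F(S).
\]
The left-hand side is already reduced by Condition (A1), and $\phi(q_2)\in F(a,b)$ since $\phi(q_2)$ is a subword of $\phi(q)$. At most $|\phi(s_1)^{-1}|\leq C$ letters of $\phi(q_2)^{-1}$ can cancel with $\phi(s_1)^{-1}$, and similarly at most $C$ with $\phi(s_2)^{-1}$; consequently, after free reduction $\phi(q_1)$ takes the form $u_1\cdot w\cdot u_2$ with $|u_1|,|u_2|\leq C$ and $w$ a subword of $\phi(q_2)^{-1}$. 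In particular every letter of $\phi(q_1)$ further than $C$ from either endpoint lies in $\{a,b\}^{\pm1}$.

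Let $A$ be the period associated with $\Pi$, so $j:=r(\Pi)=|A|$ and $\phi(\partial\Pi)$ is a cyclic shift of $A^{\pm k}$; then $|q_1|\geq \epsilon\cdot |\partial\Pi|=\epsilon kj$. For $k>10^{10}$ we have $\epsilon kj-2C\geq 2j=2|A|$, so the central subword $w$ of $\phi(q_1)$ has length at least $2|A|$. Any subword of $A^{\pm k}$ of length at least $2|A|$ contains a full consecutive occurrence of $A$ (or $A^{-1}$), and this occurrence lies in $F(a,b)$. Hence the word $A$ itself is a word over $\{a,b\}^{\pm1}$, and so as an element of $G(j-1)$ it lies in $\langle a,b\rangle$.

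But $A\in\calX_j$ is simple in rank $j-1$, so by Item (S3) of Definition \ref{def:simple}, $A$ is not conjugate in $G(j-1)$ to any element of $\langle a,b\rangle$---in particular, not to itself, a contradiction. The main technical point is the cancellation bookkeeping in the second paragraph: one must verify that the long central portion of $\phi(q_1)$ really is forced to sit inside $\phi(q_2)^{-1}$, once the bounded ``end'' interference from $\phi(s_1)^{\pm1},\phi(s_2)^{\pm1}$ is peeled off. Given this, the rest is a clean appeal to the periodicity of $\phi(\partial\Pi)$ and the simplicity of the period $A$.
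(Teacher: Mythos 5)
Your proof is correct and follows essentially the same route as the paper: choose the contiguity submap with $(\Pi,\Gamma,q)\geq\epsilon$ having the fewest cells, invoke Lemma \ref{lem:if Gamma is with minimal num of cells then r(Gamma)=0} to get $r(\Gamma)=0$, and then use the resulting rank-zero identity of boundary labels together with $|q_1|\geq\epsilon k|A|$ to force a full period $A$ of $\phi(\partial\Pi)\equiv A^{\pm k}$ to lie in $F(a,b)$, contradicting (S3). The only difference is cosmetic: you spell out the cancellation bookkeeping around the side arcs $s_1,s_2$, whereas the paper passes directly to the visually-equal long subword $T$.
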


\begin{proof}
Suppose $\Gamma$ is a contiguity submap of $\Pi$ to $q$ with $(\Pi,\Gamma,q)\geq \epsilon$ and such that the number of cells in $\Gamma$ is minimal. 
By Lemma \ref{lem:our 19.4 the construction has property A} $\Gamma$ is an $A$-map and so by Lemma \ref{lem:if Gamma is with minimal num of cells then r(Gamma)=0}, we have that $r(\Gamma)=0$.

Since $(\Pi,\Gamma,q)\geq \epsilon$, the label of $q$ has a subword $T$ equals in rank $0$, and so, 
visually equal, to a subword of the label of $\partial \Pi$. Since the label of $\partial \Pi$ is a relator of rank $>0$, it is of the form $A^k$ for some word $A$ that is a simple word of rank $\geq 0$, and  $T$ has length $\geq \epsilon |A^k|$. Since $\epsilon k>2$, it follows that $T$ must contain $A$ as a subword. This is impossible, since by requirement (S3) of the definition of a simple element $A$ must contain a letter from $S\setminus \{a,b\}$, while $T$, as a subword of $\phi(q)$ contains only the letters $\{a,b\}$. 
\end{proof}


For an element $g\in G$ we define its norm $\|g\|$ to be the minimum length $|U|$ of a word $U\in F(S)$ representing $g$ in $G$. Similarly, for a word $K\in F(S)$, we denote by $\|K\|$ the norm of the image of $K$ in $G$. Evidently $\|K\|\leq |K|$.


The next lemma formalizes the fact that the subgroup $\langle a,b\rangle$ forms a convex subset of the Cayley graph $\Gamma(G,S)$. In particular, this implies that $\langle a,b\rangle$ is a free subgroup, see the corollary thereafter. 

\begin{lemma}\label{lem:K is convex}
Let $K$ be a word in $F(a,b)$ and $U$ an arbitrary word in $F(S)$, such that $U=K$ in $G$ and $|U|\leq |K|$. Then $U\equiv K$.

In particular, for every word $K\in F(a,b)$ we have $\|K\|=|K|$.
\end{lemma}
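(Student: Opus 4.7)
The plan is to induct on $|U|$ via the diagrammatic machinery of Section \ref{sec:condition A}. We may assume both $U$ and $K$ are freely reduced (for $K$ this is the standing convention for elements of $F(a,b)$, and for $U$ one sees that if $U$ were not reduced then its reduction $\tilde U$ would satisfy $|\tilde U|<|U|\leq|K|$, so the inductive hypothesis applied to $(\tilde U,K)$ would give $\tilde U\equiv K$, incompatible with $|\tilde U|<|K|$). By Lemma \ref{lem:thm 13.1 van kampen}, fix a reduced circular diagram $\Delta$ over Presentation \ref{eq:G=<S|bigcup R_i>} whose contour decomposes as $\partial\Delta=q_1q_2$ with $\phi(q_1)\equiv U$ and $\phi(q_2)\equiv K^{-1}$. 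If $r(\Delta)=0$ then $\Delta$ carries no cells, so $U$ and $K$ are freely equal in $F(S)$; since both are freely reduced, $U\equiv K$.

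Suppose $r(\Delta)>0$. Apply Lemma \ref{lem:Greendlinger-like lemma} to the decomposition $\partial\Delta=q_1q_2$ (two of the four allowed sections being absent) to obtain a $\gamma$-cell $\pi$ together with contiguity submaps $\Gamma_1,\Gamma_2$ of $\pi$ to $q_1,q_2$ satisfying $(\pi,\Gamma_1,q_1)+(\pi,\Gamma_2,q_2)>\bar\gamma$. Since $\phi(q_2)\in F(a,b)$, Lemma \ref{lem:contiguity degree to <a,b> is small} forces $(\pi,\Gamma_2,q_2)<\epsilon$, whence $(\pi,\Gamma_1,q_1)>\bar\gamma-\epsilon$. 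Further, refine the choice of $\pi$ so that $\Gamma_1$ contains the smallest possible number of cells; by Lemma \ref{lem:if Gamma is with minimal num of cells then r(Gamma)=0} this ensures $r(\Gamma_1)=0$.

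Now perform the standard Ol'shanskii surgery: set $y_1=\Gamma_1\cap\pi$, $x_1=\Gamma_1\cap q_1$, let $y_1'$ be the complementary arc $\partial\pi\setminus y_1$, and let $s_1,s_2$ be the two connecting paths of $\Gamma_1$. Since $r(\Gamma_1)=0$, $\phi(x_1)=\phi(s_1)\phi(y_1)\phi(s_2)$ freely in $F(S)$; as $\phi(x_1)$ is reduced (subword of the reduced $U$) and $\phi(y_1)$ is reduced (subpath of the cyclically reduced $\partial\pi$, by (A1)), only the seams can cancel, yielding $|x_1|\geq|y_1|-|s_1|-|s_2|$. Excising $\pi\cup\Gamma_1$ and re-routing $q_1$ the long way around $\pi$ produces a diagram with boundary labeled $U'K^{-1}$ and
\[
|U'|\leq|U|-|x_1|+|s_1|+|y_1'|+|s_2|\leq|U|-\bigl(2|y_1|-|\partial\pi|\bigr)+2(|s_1|+|s_2|).
\]
Using $|y_1|>(\bar\gamma-\epsilon)|\partial\pi|$, the uniform bound on $|s_j|$, and $|\partial\pi|\geq k>10^{10}$, the right-hand side is strictly less than $|U|$.

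Letting $\tilde{U'}$ be the free reduction of $U'$, we have $\tilde{U'}=K$ in $G$ and $|\tilde{U'}|\leq|U'|<|U|\leq|K|$, so the inductive hypothesis yields $\tilde{U'}\equiv K$, contradicting $|\tilde{U'}|<|K|$. The base case $|U|=0$ is handled identically: the minimal diagram has a single contour labeled $K^{-1}\in F(a,b)$, and either it is cell-free (so $K$ is empty and $U\equiv K$), or Lemma \ref{lem:Greendlinger-like lemma} with three absent sections produces a $\gamma$-cell with contiguity $>\bar\gamma$ to a section labeled in $F(a,b)$, directly contradicting Lemma \ref{lem:contiguity degree to <a,b> is small}. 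The main technical subtlety is the length bookkeeping in the surgery step: one needs the gap $1-2\gamma-2\epsilon$ to be positive and to dominate the fixed bound on $|s_j|/|\partial\pi|$, both of which follow from the standing parameter choices.
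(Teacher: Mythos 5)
Your proof takes a genuinely different route from the paper's. The paper chooses a minimal counterexample pair $(K,U)$ (with $|KU^{-1}|$ minimal), observes that the section labeled $U^{-1}$ is then a \emph{smooth} section, and obtains a one-step contradiction: every cell has contiguity degree $<\epsilon$ to the $K$-side (Lemma \ref{lem:contiguity degree to <a,b> is small}) and $<\bar\alpha$ to the $U$-side (Lemma \ref{lem:contiguity degree to smooth section is always small}), so the sum is $<\bar\alpha+\epsilon<\bar\gamma$ and Lemma \ref{lem:annular Greendlinger-like lemma}/\ref{lem:Greendlinger-like lemma} is violated; no surgery and no arc-length bookkeeping is ever performed. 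You instead induct on $|U|$ and run an excision argument. The inductive framework and the length arithmetic in your surgery are fine as far as they go, but there is a genuine gap at the step where you try to secure both $r(\Gamma_1)=0$ and a large contiguity degree simultaneously.

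Concretely, your estimate $|U'|<|U|$ needs two things at once: $(\pi,\Gamma_1,q_1)>\bar\gamma-\epsilon>\tfrac12$, so that the decrease $2|y_1|-|\partial\pi|-2(|s_1|+|s_2|)$ is positive; and $r(\Gamma_1)=0$, so that $\phi(x_1)=\phi(s_1)\phi(y_1)\phi(s_2)$ holds in $F(S)$ and yields $|x_1|\geq|y_1|-|s_1|-|s_2|$. You obtain the first from the $\gamma$-cell provided by Lemma \ref{lem:Greendlinger-like lemma}, and then write ``refine the choice of $\pi$ so that $\Gamma_1$ contains the smallest possible number of cells; by Lemma \ref{lem:if Gamma is with minimal num of cells then r(Gamma)=0} this ensures $r(\Gamma_1)=0$.'' But Lemma \ref{lem:if Gamma is with minimal num of cells then r(Gamma)=0} minimizes the cell count over \emph{all} cells $\Pi$ admitting a contiguity submap of degree $\geq\epsilon$ to $q_1$, and its conclusion attaches only to that global minimizer; it says nothing about the degree of the minimizing cell beyond $\geq\epsilon$. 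After the refinement you may therefore be left with a cell $\pi$ whose contiguity degree to $q_1$ is barely above $\epsilon$, in which case $2|y_1|-|\partial\pi|$ is very negative and the surgery \emph{lengthens} $U$. You cannot restrict the minimization to cells of degree $>\bar\gamma-\epsilon$ and still invoke Lemma \ref{lem:if Gamma is with minimal num of cells then r(Gamma)=0}. To repair this along your lines you would need a length comparison for contiguity arcs valid in arbitrary contiguity submaps of an $A$-map (not only rank-zero ones), i.e.\ an $A$-map analogue of ``opposite arcs of a contiguity submap have comparable lengths''; such estimates exist in \cite{OL-book} but are not among the lemmas quoted in this paper, and your proposal does not establish one. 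The paper's route avoids the issue entirely because it never needs $r(\Gamma_1)=0$: it compares only contiguity degrees, never arc lengths.
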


\begin{proof}
Suppose not. Then there exist $K\in F(a,b)$ and $U\in F(S)$ such that $U=K$ in $G$, $|U|\leq |K|$, but $U\nequiv K$. Let $K,U$ be such words and such that the cyclic word $KU^{-1}$ is of minimal possible length. 

By Theorem \ref{lem:thm 13.1 van kampen} there exists a reduced graded circular diagram $\Delta$ over the graded presentation \ref{eq:G=<S|bigcup R_i>} with contour $p$ and label $\phi(p)=KU^{-1}$. The contour of $\Delta$ decomposes as $p=q^1q^2q^3q^4$ where $\phi(q^1)=K,\phi(q^2)=U^{-1},\phi(q^3)=\phi(q^4)=\emptyset$. We have that $q^1,q^2$ are smooth section by minimality of $K,U$, while $q^3,q^4$ are smooth sections by definition. Since $\Delta$ has only a finite number of cells, it is a diagram of rank $j$ for some $j\geq 0$ 

We claim that $r(\Delta)=0$. Indeed, if $r(\Delta)>0$, then by Lemma \ref{lem:our 19.4 the construction has property A}, $\Delta$ is an $A$-map. Let $\Pi$ be an arbitrary cell in $\Delta$. By Lemma \ref{lem:contiguity degree to <a,b> is small}, the contiguity degree of $\Pi$ to $p_K$ is less than $\epsilon$. By Lemma \ref{lem:contiguity degree to smooth section is always small}, the contiguity degree of $\Pi$ to $p_U$ is smaller than $\bar{\alpha}$. Since $\epsilon + \bar{\alpha}<\bar{\gamma}$, we get that $\Pi$ is not a $\gamma$-cell. But Lemma \ref{lem:annular Greendlinger-like lemma} ensures that a $\gamma$-cell exists, deriving a contradiction. We therefore get $r(\Delta)=0$. This implies that $KU^{-1}\equiv 1$ visually, and so $K\equiv U$, deriving a contradiction again, and completing the proof of the lemma.
\end{proof}

\begin{corollary}\label{lem:H is free}
The subgroup $\langle a,b \rangle\leq G$ is a non-abelian free group.
\end{corollary}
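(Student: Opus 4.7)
The plan is to deduce the corollary directly from Lemma \ref{lem:K is convex}. Concretely, I would argue as follows: the natural surjective homomorphism $\varphi \colon F(a,b) \twoheadrightarrow \langle a,b\rangle$ sending the free generators to $a,b$ is automatically well-defined; it suffices to show that $\varphi$ is injective, since then $\langle a,b\rangle$ is isomorphic to $F(a,b)$, which is free of rank $2$ and in particular non-abelian.

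To prove injectivity, let $K \in F(a,b)$ be a reduced word representing an element of $\ker \varphi$. This means $K = 1$ in $G$, so $\|K\|=0$. By Lemma \ref{lem:K is convex}, $\|K\| = |K|$, so $|K|=0$, i.e.\ $K$ is the empty word. Hence $\ker \varphi$ is trivial, and $\varphi$ is an isomorphism.

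No real obstacle arises here: the entire content of the statement is the convexity conclusion $\|K\|=|K|$ established in Lemma \ref{lem:K is convex}, and the corollary is essentially a one-line consequence. The only thing to verify is that $a$ and $b$ really are distinct generators giving rank at least $2$, which is immediate since the single-letter words $a, b \in F(a,b)$ satisfy $\|a\| = \|b\| = 1$ and are therefore non-trivial in $G$, and similarly the commutator $[a,b]$ has $\|[a,b]\| = 4 \neq 0$, ruling out the abelian case.
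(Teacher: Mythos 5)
Your proof is correct and follows essentially the same route as the paper: both reduce the corollary to Lemma \ref{lem:K is convex} by observing that a nontrivial reduced word $K\in F(a,b)$ has $\|K\|=|K|>0$ and hence is nontrivial in $G$. The extra verification at the end that $a,b$ generate a rank-$2$ non-abelian subgroup is redundant once injectivity of $\varphi$ is established, since $F(a,b)$ is already free of rank $2$ and non-abelian.
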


\begin{proof}
Let $K$ be a freely reduced word in $F(a,b)$ with positive length, $|K|>0$. By Lemma \ref{lem:K is convex}, $\|K\|=|K|>0$, and in particular, $K\neq 1$ in $G$.
\end{proof}


We next show that also the union of conjugates of $\langle a,b \rangle$ enjoys convexity qualities.

\begin{lemma}\label{lem:conjugates of K have length 2V+K}
Suppose an element $g\in G$ is conjugate to an element in the subgroup $\langle a,b\rangle$.
Then there exist $K\in F(a,b)$ and $V\in F(S)$ such that $VKV^{-1}$ represents $g$ in $G$ and such that $\|g\|=|VKV^{-1}|=2|V|+|K|$ 

\end{lemma}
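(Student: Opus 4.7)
The plan is to realize $g$ as $VKV^{-1}$ for a specific, carefully chosen pair $(V,K)$, and then to show that this realization is length-minimal in $G$.

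First I would choose $K\in F(a,b)$ of minimal length among all words in $F(a,b)$ conjugate to $g$ in $G$; such $K$ exists by hypothesis and is automatically cyclically reduced. Next I would choose $V\in F(S)$ of minimal length subject to $g=VKV^{-1}$ in $G$. A short cancellation argument shows that the word $VKV^{-1}$ is then freely reduced as an element of $F(S)$: if, say, the last letter of $V$ cancels with the first letter of $K$, write $V=V_{0}c$ and $K=c^{-1}K'$ with $c\in\{a,b\}^{\pm 1}$; then $g=V_{0}(K'c^{-1})V_{0}^{-1}$ with $K'c^{-1}\in F(a,b)$ of length $|K|$ and $|V_{0}|=|V|-1$, contradicting the minimality of $|V|$. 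A symmetric argument handles the other end. Thus $|VKV^{-1}|=2|V|+|K|$, giving the upper bound $\|g\|\le 2|V|+|K|$.

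For the reverse inequality, let $U\in F(S)$ represent $g$ with $|U|=\|g\|$. Since $U$ and $K$ are conjugate in $G$, Lemma \ref{lem:thm 13.2 annular van-kampen} yields a reduced annular diagram $\Delta$ over Presentation \ref{eq:G=<S|bigcup R_i>} with outer contour $p$ labeled by $U$ and inner contour $q$ labeled by $K^{-1}$. The key claim is that $r(\Delta)=0$. If not, Lemma \ref{lem:our 19.4 the construction has property A} makes $\Delta$ an $A$-map, and Corollary \ref{lem:annular Greendlinger-like lemma} produces an $\calR$-cell $\pi$ and disjoint contiguity submaps $\Gamma_{1},\Gamma_{2}$ to $p$ and $q$ with $(\pi,\Gamma_{1},p)+(\pi,\Gamma_{2},q)>\bar{\gamma}$. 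Since $\phi(q)=K^{-1}\in F(a,b)$, Lemma \ref{lem:contiguity degree to <a,b> is small} gives $(\pi,\Gamma_{2},q)<\epsilon$. The remaining ingredient is that the outer contour $p$ is a smooth section, in which case Lemma \ref{lem:contiguity degree to smooth section is always small} yields $(\pi,\Gamma_{1},p)<\bar{\alpha}$, producing a contradiction with $\bar{\alpha}+\epsilon<\bar{\gamma}$.

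Establishing the smoothness of $p$ is the main technical obstacle. It should follow from the minimality of $|U|$ along the lines used implicitly in the proof of Lemma \ref{lem:K is convex}: if some cell $\pi$ admitted a contiguity submap $\Gamma$ with $(\pi,\Gamma,p)\ge\epsilon$ and $|\Gamma\cap p|\ge(1+\gamma)\,r(\pi)$, then (after replacing $\Gamma$ by one with a minimal number of cells so that $r(\Gamma)=0$ via Lemma \ref{lem:if Gamma is with minimal num of cells then r(Gamma)=0}) one could use the relator $X^{k}=1$ associated with $\pi$ to replace the corresponding subpath of $p$ by a strictly shorter subword of $U$, contradicting $|U|=\|g\|$. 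Once $r(\Delta)=0$ is established, the diagram encodes a conjugation equality in the free group $F(S)$: $U=WK_{c}W^{-1}$ as reduced words in $F(S)$, where $K_{c}$ is a cyclic shift of $K$ (using that $K$ is cyclically reduced) and $W\in F(S)$, so that $|U|=2|W|+|K|$. Then $g=WK_{c}W^{-1}$ in $G$ with $K_{c}\in F(a,b)$ and $|K_{c}|=|K|$, and the minimality of $|V|$ forces $|W|\ge|V|$. Therefore $\|g\|=|U|\ge 2|V|+|K|$, completing the proof.
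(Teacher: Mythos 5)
Your core geometric argument — pass to a reduced annular diagram $\Delta$ with contours labeled $U$ and $K^{-1}$, and force $r(\Delta)=0$ by combining Lemma \ref{lem:our 19.4 the construction has property A}, Corollary \ref{lem:annular Greendlinger-like lemma}, Lemma \ref{lem:contiguity degree to <a,b> is small} and Lemma \ref{lem:contiguity degree to smooth section is always small} — is exactly the paper's. The paper wraps it in a minimal-norm-counterexample argument, which lets it take both $U$ and $K$ cyclically reduced; the rank-$0$ conclusion then makes $U$ a cyclic shift of $K$, hence a word in $F(a,b)$, and the contradiction with $U\notin F(a,b)$ is immediate. You instead fix $K$ and a minimal conjugator $V$ up front and try to finish by comparing $|V|$ with the conjugator $W$ extracted from the diagram.

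That comparison is where your argument breaks. From $r(\Delta)=0$ you only get $U\equiv WK_cW^{-1}$ with $K_c$ a \emph{cyclic shift} of $K$, not $K$ itself. Minimality of $|V|$ was taken over solutions of $g=VKV^{-1}$, and it implies nothing about $|W|$; in fact $|W|<|V|$ can occur. For instance, if $U\equiv s_1bas_1^{-1}$ realizes $\|g\|$ and $K=ab$, then $W=s_1$ while the shortest $V$ with $g=V(ab)V^{-1}$ in $G$ is $V=s_1b$, so $2|V|+|K|=6>4=\|g\|$, and the inequality $\|g\|\ge 2|V|+|K|$ you aim for is false for the pair you selected. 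Since the statement only asserts the existence of \emph{some} $(K,V)$, the pair $(K_c,W)$ read off the rank-$0$ diagram already satisfies it: $WK_cW^{-1}$ is a freely reduced word representing $g$, $K_c\in F(a,b)$, and $\|g\|=|U|=2|W|+|K_c|$. Using that pair makes your entire first paragraph unnecessary. One further remark: you are right that $p$ must be a smooth section for Lemma \ref{lem:contiguity degree to smooth section is always small} to apply (the paper's sentence asserting this appears truncated), but the reason is simpler than your sketch suggests — $\phi(p)\equiv U$ is geodesic, so every subpath of $p$ is geodesic in $\Delta$, and the contiguity condition in the definition of a smooth section is then met by taking the rank of the section large; your cell-cutting heuristic conflates the two defining conditions, though the needed conclusion does hold.
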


\begin{proof}
If not, let $g\in G$ be an element of minimal norm, for which the lemma is false, and let $U$ be a word representing $g$ in $G$, with $|U|=\|g\|$. By assumption, $g$ is conjugate to an element from $\langle a,b\rangle$. 
Let $K$ be a shortest word in $F(a,b)$ representing an element that is conjugate to $g$ in $G$.
Note that by minimality of the choice, both $U$ and $K$ are cyclically reduced, as otherwise, they could be replaced by a shorter conjugate. We can furthermore assume that $U\notin F(a,b)$, as the case $U\in F(a,b)$ is taken care of in Lemma \ref{lem:K is convex}.

By Lemma \ref{lem:thm 13.2 annular van-kampen} there exists a reduced graded annular diagram $\Delta$ over Presentation \ref{eq:G=<S|bigcup R_i>} with contours $p_K$ and $p_U$ and labels $\phi(p_K)=K$, $\phi(p_U)=U^{-1}$. Moreover, the paths $p_U$ and $p_K$ by minimality of the choice.

We claim that $r(\Delta)=0$. Indeed, suppose that $r(\Delta)>0$ and let $\Pi$ be an arbitrary cell in $\Delta$. By Lemma \ref{lem:contiguity degree to <a,b> is small}, the contiguity degree of $\Pi$ to $p_K$ across an arbitrary contiguity submap is less than $\epsilon$. By Lemma \ref{lem:our 19.4 the construction has property A} $\Delta$ is an $A$-map, and so by Lemma \ref{lem:contiguity degree to smooth section is always small}, the contiguity degree of $\Pi$ to $p_U$ across an arbitrary contiguity submap is less than $\bar{\alpha}$. Since $\epsilon + \bar{\alpha}<\bar{\gamma}$, $\Pi$ is not a $\gamma$-cell. But Lemma \ref{lem:annular Greendlinger-like lemma} ensures that a $\gamma$-cell exists, deriving a contradiction.

The rank of $\Delta$ being $0$ means that $\Delta$ is a diagram over the free group. Using Lemma \ref{lem:thm 13.2 annular van-kampen} again, $K$ and $U$ are conjugates over the free group.
Since $U\notin F(a,b)$, it cannot be that $K\equiv VUV^{-1}$ for any word $V\in F(S)$. It follows that $VKV^{-1}\equiv U$ visually, and so $\|g\|=|U|=|VKV^{-1}|=2|V|+|K|$. We derived a contradiction, concluding the proof of the lemma. 
\end{proof}


\section{The law $x^k=1$ holds almost surely}\label{sec:proof of main theorem}

As in \cite{OL-book}, let $k$ be a sufficiently large odd number and let $G$ be as in Presentation \ref{eq:G=<S|bigcup R_i>} ($m$ will be specified in the sequel).

Let $H$ denote the subgroup $H=\langle a,b\rangle$. For an element $h\in H$, denote by $\|h\|_H$ its norm with respect to the generating set $\{a^{\pm 1},b^{\pm 1}\}$. As before, the notation $\|g\|$ for $g\in G$ is used to denote the norm of $g$ with respect to the generating set $S\cup S^{-1}$.

For $r\in \mathbb{R}_{\geq 0}$ let: $$ B_G(r)=\{g\in G\ |\ \|g\|\leq r\},\ B_H(r)=\{h\in H\ |\ \|h\|_H\leq r\} $$
be the corresponding $r$-balls in $G,H$ respectively and let
$$ \gamma_G(n) = \# B_G(n),\ \gamma_H(n) = \# B_H(n) $$
denote the corresponding growth functions.

Recall that the growth exponent $\alpha:=\lim_{n\rightarrow \infty} \sqrt[n]{\gamma_G(n)}$ exists by Fekete's lemma. 
By \cite[Theorem~1.3]{Coulon_exp}, the free Burnside group $B(m,k)$ has exponential growth and moreover, its growth exponent (with respect to the standard set of generators) is at least $\alpha_{m,k}$, where $\alpha_{m,k}\xrightarrow{m\rightarrow \infty} \infty$.
Therefore $G$ grows exponentially, and since $G\twoheadrightarrow B(m,k)$, the exponent $\alpha$ is bounded from below by $\alpha_{m,k}$. Let us pick $m\gg 1$ such that $\alpha_{m,k}$ (and hence $\alpha$) is greater than $7$. Let $N$ be such that for $r\geq N$, $$ (\alpha-1)^r\leq \gamma_G(r)\leq (\alpha+1)^r. $$ In addition, recall that $\gamma_H(r)\leq C\cdot 3^r$ for some $C>0$, the latter bounding the growth function of a free group on two generators with respect to the standard generators.
Finally, let $H^G := \bigcup_{x\in G} xHx^{-1}$ denote the union of conjugates of $H$ and let $\langle\langle H \rangle\rangle := \bigcap_{\substack{N\trianglelefteq G \\ H\leq N}} N$ denote the normal closure of $H$ in $G$.

We now compute the density of torsion elements in $G$. By Lemma \ref{lem:torsion}, this amounts to computing the density of $H^G$.

\begin{lemma} \label{lem:uniform}
We have:
$$ \frac{\#\left(B_G(n)\cap H^G\right)}{\#B_G(n)}\xrightarrow{n\rightarrow \infty} 0. $$
\end{lemma}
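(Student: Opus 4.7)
The plan is to bound $\#(B_G(n)\cap H^G)$ by parametrizing its elements via Lemma \ref{lem:conjugates of K have length 2V+K} as conjugates $vkv^{-1}$ in a length-minimizing form, and then to compare the resulting count with the exponential lower bound $\gamma_G(n)\geq(\alpha-1)^n$.

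First, I would invoke Lemma \ref{lem:conjugates of K have length 2V+K}: every $g\in H^G\cap B_G(n)$ can be written as $g=vkv^{-1}$ in $G$ with $v\in G$, $k\in H$, and $2\|v\|+\|k\|=\|g\|\leq n$; by Lemma \ref{lem:K is convex} we moreover have $\|k\|=\|k\|_H$. Counting elements (rather than words) this yields
$$\#(B_G(n)\cap H^G)\leq \sum_{\substack{s,t\geq 0\\ 2s+t\leq n}}\#\{v\in G:\|v\|=s\}\cdot\#\{k\in H:\|k\|_H=t\}\leq\sum_{2s+t\leq n}\gamma_G(s)\,\gamma_H(t).$$

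Next, I would substitute the growth estimates $\gamma_G(s)\leq(\alpha+1)^s$ for $s\geq N$ (absorbing $s<N$ into an overall constant) and $\gamma_H(t)\leq C\cdot 3^t$, bounding the right-hand side by a constant multiple of
$$\sum_{s=0}^{\lfloor n/2\rfloor}(\alpha+1)^s\cdot 3^{n-2s}\;=\;3^n\sum_{s=0}^{\lfloor n/2\rfloor}\Bigl(\frac{\alpha+1}{9}\Bigr)^s.$$
The inner sum is $O(1)$ when $\alpha+1<9$ and dominated by its last term otherwise; in both regimes the whole expression is at most a polynomial in $n$ times $\max\{3^n,\,(\alpha+1)^{n/2}\}$. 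Comparing with $\gamma_G(n)\geq(\alpha-1)^n$: since $\alpha>7$ we have $\alpha-1>3$, so $3^n/(\alpha-1)^n\to 0$, and $(\alpha-1)^2>\alpha+1$ (equivalent to $\alpha>3$), so $(\alpha+1)^{n/2}/(\alpha-1)^n\to 0$. Hence the ratio vanishes.

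The step that does all the work is Lemma \ref{lem:conjugates of K have length 2V+K}, which guarantees that the conjugator $v$ contributes its full length twice to $\|g\|$; this is exactly what makes the $3^t$ growth of $H$ compete favorably against the exponential growth of $G$. Without such a convexity-type statement the conjugates of even a free subgroup could in principle fill a positive-density subset of $G$, as in the construction of \cite{Osin}; here that possibility is ruled out by the $A$-map/small-cancellation machinery developed in Section \ref{sec:condition A}.
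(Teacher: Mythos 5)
Your proposal is correct and follows essentially the same route as the paper: parametrize $B_G(n)\cap H^G$ via Lemma \ref{lem:conjugates of K have length 2V+K}, bound the count by $\sum_{2s+t\leq n}\gamma_G(s)\gamma_H(t)$, substitute the growth estimates, and compare exponential rates against $\gamma_G(n)\geq(\alpha-1)^n$. The only difference is the final estimate of the sum --- you split into cases according to whether the geometric ratio $(\alpha+1)/9$ is below or above $1$, whereas the paper uses the cruder but slicker bound $\sum_i \binom{n}{i}3^i\sqrt{\alpha+1}^{\,n-i}=(3+\sqrt{\alpha+1})^n$ and then checks $3+\sqrt{\alpha+1}<\alpha-1$; both are fine, and yours in fact needs a slightly weaker hypothesis on $\alpha$.
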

\begin{proof}
By Lemma \ref{lem:conjugates of K have length 2V+K},
$$ B_G(n)\cap H^G = \bigcup_{i=0}^{n} \Big\{xax^{-1}\ |\ a\in B_H(i),\ x\in B_G\left(\frac{n-i}{2}\right)\Big\}.$$
Therefore:
\begin{eqnarray*}
\#\left(B_G(n)\cap H^G\right) & \leq & \sum_{i=0}^{n} \gamma_H(i)\gamma_G\left(\frac{n-i}{2}\right) \\
& = & \sum_{i=0}^{n-2N} \gamma_H(i)\gamma_G\left(\frac{n-i}{2}\right) + \sum_{i=n-2N+1}^{n} \gamma_H(i)\gamma_G\left(\frac{n-i}{2}\right) \\
& \leq & \sum_{i=0}^{n-2N} \gamma_H(i)\gamma_G\left(\frac{n-i}{2}\right) + 2N\cdot C3^n \cdot \gamma_G(N)
\end{eqnarray*}
Now denote $D:=2NC \gamma_G(N)$ (this is a constant that does not depend on $n$) and notice that for $i\leq n-2N$ we have $\frac{n-i}{2}\geq N$ so $\gamma_G\left(\frac{n-i}{2}\right)\leq (\alpha+1)^{\frac{n-i}{2}}$.
Therefore:
\begin{eqnarray*}
\#\left(B_G(n)\cap H^G\right) & \leq & \sum_{i=0}^{n-2N} C\cdot 3^i\cdot (\alpha+1)^{\frac{n-i}{2}} + D \cdot 3^n \\
& \leq & C\sum_{i=0}^{n} {n \choose i}\cdot 3^i\cdot \sqrt{\alpha+1}^{n-i} + D \cdot 3^n \\
& \leq & C\cdot \left(3 + \sqrt{\alpha+1}\right)^n + D\cdot 3^n \\
& \leq & C' \left(3 + \sqrt{\alpha+1}\right)^n
\end{eqnarray*}
for some new constant $C'>0$. Since $\alpha>7$ it follows that $3 + \sqrt{\alpha+1}<\alpha-1$ and so:
$$ \frac{\#\left(B_G(n)\cap H^G\right)}{\#B_G(n)} \leq \frac{C' \cdot\left(3 + \sqrt{\alpha+1}\right)^n}{(\alpha-1)^n}\xrightarrow{n\rightarrow \infty} 0, $$
as claimed.
\end{proof}

We conclude by observing that $G$ satisfies the requirements of Theorem \ref{thm:main_construction}.

\begin{proof}[{Proof of Theorem \ref{thm:main_construction}}]

We now show that the group $G$, defined in \ref{eq:G=<S|bigcup R_i>} satisfies the requirements of the theorem. As before, we denote by $H$ the subgroup $H=\langle a,b\rangle$.
By Lemma \ref{lem:H is free}, $H\leq G$ is a free non-abelian subgroup, so Item (3) is satisfied.

Let 
$\mu_n=U_{B_{G}(n)}$ be the uniform distribution on the $n$-th ball of the Cayley graph $\Gamma(G,S)$. Then:
$$ \Pr_{\mu_n} (x^k = 1) = \frac{\#\left(B_G(n)\cap H^G\right)}{\#B_G(n)} \xrightarrow{n\rightarrow \infty} 0 $$
by Lemma \ref{lem:uniform}, proving Item (1).

To prove Item (2), let $\nu$ be a finitely supported step-distribution on $G$ such that $\text{supp}(\nu)$ generates $G$ as a semigroup, and consider the sequence of measures $\{\nu^{*n}\}_{n=1}^{\infty}$, which represents the corresponding random walk on $G$.

Recall that by Corollary \ref{cor:quotient burnside}, $G/\langle\langle H \rangle\rangle\cong B(m,k)$; denote the resulting surjection by $\pi\colon G\twoheadrightarrow B(m,k)$ and let $\pi_\sharp \nu$ be the pushforward measure. Since by Lemma \ref{lem:torsion} every element in $G$ which does not have order dividing $k$ is conjugate to an element from $H$, we obtain:

\begin{eqnarray*}
\Pr_{\nu^{*n}} (x^k \neq 1) & = & \Pr_{\nu^{*n}}\left(x\in H^G \setminus \{1\} \right) \\ & \leq & \Pr_{\nu^{*n}}\left(x\in \langle\langle H \rangle\rangle\right) \\
& = & \Pr_{(\pi_\sharp \nu)^{*n}}(x=1)
\end{eqnarray*}

Now $\Pr_{(\pi_\sharp \nu)^{*n}}(x=1)$ is the return probability of the random walk on $G/\langle\langle H \rangle\rangle\cong B(m,k)$ with step-distribution $\pi_\sharp \nu$, whose support is a finite generating set of $B(m,k)$. 
Therefore $\Pr_{(\pi_\sharp \nu)^{*n}}(x=1)\xrightarrow{n\rightarrow \infty} 0$ and hence:
$$ \Pr_{\nu^{*n}} (x^k \neq 1) \xrightarrow{n\rightarrow \infty} 0,$$
as required.
\end{proof}

\appendix
\section{$A$-maps over Presentation \ref{eq:G=<S|bigcup R_i>}} 

Condition $A$ plays a crucial role in verifying the desired properties of our construction. The main objective of this appendix is to prove Lemma \ref{lem:lem 19.4}, namely, that every map over Presentation \ref{eq:G=<S|bigcup R_i>} satisfies Condition $A$. The proof of this lemma goes step by step as in the original proof of \cite[Lemma 19.4]{OL-book}. It is done by a simultaneous proof of a sequence of lemmas by induction on the rank of the map. This allows the proof to use inner references to other lemmas: by the induction hypothesis, every rank-$i$ map over \ref{eq:G=<S|bigcup R_i>} is an $A$-map, and so the other lemmas hold for rank $i$ maps as well, allowing the proof for maps of rank $i+1$. 

This appendix also provides a road-map for the reader who wishes to observe the similarities and differences between the geometric properties of Presentation \ref{eq:G=<S|bigcup R_i>} established in the current article and Ol'shanskii's presentation of a free Burnside group constructed in \cite[§18]{OL-book}.

Since the presentations are indeed similar in spirit, many of the claims from \cite[§18,§19]{OL-book}, which describe properties of Ol'shanskii's presentation and maps over it, hold also for Presentation \ref{eq:G=<S|bigcup R_i>}.
Although in most cases this is a word-to-word repetition of \cite{OL-book}, we chose to re-state the statements here, to emphasize that despite the difference in the definition of `simple elements' and `periods', the statements hold for Presentation \ref{eq:G=<S|bigcup R_i>} as well.

We distinguish between three types of lemmas, according to the level of similarity between our presentation and Olshankii's:
\begin{enumerate}[label=(\Roman*)]
    \item Lemmas that hold `as is' in our case, and are brought here together with an overview of tools and inner references utilized in their proofs.
    \item Lemmas whose statements hold `as is' in our case but whose proofs require some minor adaptations, due to the aforementioned differences in the constructions; these are brought here, accompanied with explanations of the required adaptation of the proofs.
    \item Lemmas whose statements in our case are different than their parallels in \cite{OL-book}; these are brought here with complete proofs, although these proofs are often quite similar to the proofs of their parallels from \cite{OL-book}.
\end{enumerate}

We now list the lemmas appearing in this appendix, according to the above types, together with their counterparts from \cite{OL-book} and the lemmas used to prove them by simultaneous induction.

\begin{center}
\begin{tabular}{||c c c c||} 
 \hline
 Lemma & Counterpart from \cite{OL-book} & Type & Inner dependencies \\ [0.5ex] 
 \hline\hline
 \ref{lem:ours 18.1} & Lemma 18.1 & III & \\
 \hline
 \ref{lem:ours 18.3} & Lemma 18.3 & II & \ref{lem:ours 18.1},\ref{lem:lem 19.4},\ref{lem:19.5 section of reduced diagram with periodic contour is smooth},Lemma \ref{lem:thm 13.1 van kampen},\cite[Lemma 17.1]{OL-book} \\
 \hline
 \ref{lem:lem 18.4} & Lemma 18.4 & I & \ref{lem:lem 19.4},\ref{lem:19.5 section of reduced diagram with periodic contour is smooth} \\
 \hline
 \ref{lem:lem 18.5} & Lemma 18.5 & I & \ref{lem:lem 19.4} \\
 \hline
 \ref{lem:our lem 18.6} & Lemma 18.6 & II & \ref{lem:ours 18.1},\ref{lem:lem 18.5},\ref{lem:lem 18.8},\ref{lem:our lem 19.3},\ref{lem:lem 19.4},\ref{lem:19.5 section of reduced diagram with periodic contour is smooth},Lemma \ref{lem:K is convex} \\
 \hline
 \ref{lem:lem 18.7} & Lemma 18.7 & I & \ref{lem:our lem 18.6},\ref{lem:lem 19.4},\ref{lem:19.5 section of reduced diagram with periodic contour is smooth} \\
 \hline
 \ref{lem:lem 18.8} & Lemma 18.8 & I & \ref{lem:lem 18.4},\ref{lem:lem 18.7},\ref{lem:lem 19.4},\ref{lem:19.5 section of reduced diagram with periodic contour is smooth} \\
 \hline
 \ref{lem:lem 18.9} & Lemma 18.9 & I & \ref{lem:ours 18.3} \\
 \hline
 \ref{lem:our lem 19.1} & Lemma 19.1 & I & \ref{lem:lem 18.4},\ref{lem:lem 18.7},\ref{lem:lem 19.4},\ref{lem:19.5 section of reduced diagram with periodic contour is smooth} \\
 \hline
 \ref{lem:19.2 either q1 is small, or q1,q2 are compatible} & Lemma 19.2 & I & \ref{lem:our lem 18.6},\ref{lem:lem 18.9},\ref{lem:our lem 19.1},\ref{lem:lem 19.4},\ref{lem:19.5 section of reduced diagram with periodic contour is smooth} \\
 \hline
 \ref{lem:our lem 19.3} & Lemma 19.3 & III & \\
 \hline
 \ref{lem:lem 19.4} & Lemma 19.4 & I & \ref{lem:19.2 either q1 is small, or q1,q2 are compatible},\ref{lem:our lem 19.3},\ref{lem:19.5 section of reduced diagram with periodic contour is smooth} \\
 \hline
 \ref{lem:19.5 section of reduced diagram with periodic contour is smooth} & Lemma 19.5 & I & \ref{lem:19.2 either q1 is small, or q1,q2 are compatible},\ref{lem:our lem 19.3} \\
 \hline
 \ref{lem:13.2 boundary of two cells can't be compatible} & Lemma 13.2 & I & \\ \hline
\end{tabular}
\end{center}

\bigskip

The next lemmas use small positive constants $\alpha>\beta>\gamma>\epsilon>\zeta$ defined as in \cite[§15.1]{OL-book} ($\alpha,\gamma,\epsilon$ has already appeared above).

\begin{lemma}[{cf. \cite[Lemma 18.1]{OL-book}}]\label{lem:ours 18.1}
    Every word $X$ is conjugate in rank $i\geq 0$ to either a power of a period of rank $j\leq i$, an element of $\langle a,b\rangle\leq G(i)$, or a power of a word simple in rank $i$.
\end{lemma}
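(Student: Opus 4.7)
My plan is to prove this by induction on $|X|$, the length of $X$ as a word over $S$. The base case $|X|=0$ is immediate because then $X$ represents the identity in $G(i)$, which lies in $\langle a,b\rangle$.

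For the inductive step with $|X|\geq 1$, I would first check whether $X$ itself is simple in rank $i$. If so, then $X=X^{1}$ is already a power (with exponent one) of a word simple in rank $i$, and we are done. Otherwise, $X$ fails at least one of the three conditions (S1), (S2), (S3) of Definition~\ref{def:simple}. Failure of (S1) means $X$ is conjugate in $G(i)$ to $B^{m}$ for some period $B$ of rank $l\leq i$, which is the first conclusion. Failure of (S3) means $X$ is conjugate in $G(i)$ to a word in $\langle a,b\rangle$, which is the second conclusion. So the only nontrivial case is failure of (S2): $X$ is conjugate in $G(i)$ to some power $C^{m}$ with $|C|<|X|$.

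In the (S2) case, if $m=0$ then $X=1$ in $G(i)$ and the claim is trivial (the identity lies in $\langle a,b\rangle$); if $m<0$ I replace $C$ by $C^{-1}$ to reduce to $m>0$. I then apply the induction hypothesis to $C$, which has shorter length than $X$: in $G(i)$, $C$ is conjugate to either a power $B^{\ell}$ of a period $B$ of rank $\leq i$, or to some $h\in\langle a,b\rangle$, or to a power $D^{\ell}$ of a word $D$ simple in rank $i$. Each of these three classes is visibly closed under taking $m$-th powers: $B^{\ell m}$ is still a power of a period $B$, $h^{m}\in\langle a,b\rangle$, and $D^{\ell m}$ is still a power of a simple word. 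Since conjugacy in $G(i)$ is preserved by taking powers, $X$ is conjugate in rank $i$ to an element in one of the three target classes, completing the induction.

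No step here looks genuinely hard: the statement is a straightforward classification that amounts to unpacking the definition of ``simple in rank $i$'' and noting that each disjunct in the conclusion is a class closed under forming powers. The only place to be slightly careful is the degenerate sub-case $m=0$ inside the (S2) branch, which is why I would handle it explicitly before invoking the induction hypothesis on $C$. Unlike later lemmas in the appendix, this one needs no machinery from Condition $A$ or from diagrams, and the proof is essentially identical in structure to Ol'shanskii's \cite[Lemma~18.1]{OL-book}, with the single modification that the new axiom (S3) produces the extra conclusion ``conjugate to an element of $\langle a,b\rangle$''.
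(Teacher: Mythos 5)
Your proof is correct and follows essentially the same strategy as the paper: induction on $|X|$, unpacking which of (S1)--(S3) fails and applying the inductive hypothesis in the (S2) case using that each of the three target classes is closed under powers and conjugation. The only differences are cosmetic — you spell out the $m=0$ and $m<0$ degeneracies, and you organize the case split slightly differently (the paper first assumes (S2) holds and then notes that failure of (S1) or (S3) gives the corresponding conclusion, while you check simplicity first) — but the mathematical content is identical.
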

\begin{proof}
    Fix $i\geq 0$. We induct on $|X|$. Suppose first that $X$ is not conjugate in $G(i)$ to a power of a shorter word. If $X$ is not conjugate to a power of a period of rank $j\leq i$, and furthermore, $X$ is not conjugate in $G(i)$ to an element of $\langle a,b\rangle\leq G(i)$, then it is simple in rank $i$ by definition. So the assertion of the lemma holds in this case.

    We can therefore assume that $X=ZY^lZ^{-1}$ in $G(i)$ for some word $Y$ strictly shorter than $X$. By the inductive hypothesis, $Y=UA^mU^{-1}$, where $A$ is either a period of rank $j\leq i$, an element of $\langle a,b\rangle\leq G(i)$, or simple in rank $i$. Then $X=(ZU)A^{ml}(ZU)^{-1}$ completes the proof.
\end{proof}


\begin{lemma}[{cf. \cite[Lemma 18.3]{OL-book}}]\label{lem:ours 18.3}
If $X\neq 1$ and $X$ has finite order in rank $i$, then it is conjugate in rank $i$ to a power of some period of rank $k\leq i$.
\end{lemma}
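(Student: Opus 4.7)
The plan is to invoke Lemma \ref{lem:ours 18.1}, which presents three possible cases for $X$ up to conjugacy in rank $i$, and then rule out or reduce the two cases that are not the stated conclusion. Namely, by Lemma \ref{lem:ours 18.1}, $X$ is conjugate in rank $i$ to (a) a power of a period of rank $\le i$; (b) an element of the subgroup $\langle a,b\rangle\le G(i)$; or (c) a power $A^m$ of a word $A$ simple in rank $i$, with $m\neq 0$ since $X\neq 1$. Case (a) is exactly the desired conclusion.

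Next I would dispose of case (b) by showing that the subgroup $\langle a,b\rangle\le G(i)$ is torsion-free. The key observation is that the natural surjection $G(i)\twoheadrightarrow G$ is the identity on $a$ and $b$, and Lemma \ref{lem:K is convex} forces $\|K\|=|K|$ in $G$ for every reduced $K\in F(a,b)$; consequently no nontrivial reduced word in $F(a,b)$ becomes trivial in $G$, and hence neither in $G(i)$. So $\langle a,b\rangle\le G(i)$ is free, in particular torsion-free, contradicting the assumption that $X$ is conjugate to a nontrivial finite-order element of this subgroup.

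For case (c), I would split according to whether $|A|\le i$ or $|A|>i$. When $|A|\le i$, simplicity in rank $i$ implies simplicity in rank $|A|-1$ (since conjugacy in lower rank is a stronger relation than in higher rank, because $G(|A|-1)$ surjects onto $G(i)$), so by the maximality defining $\calX_{|A|}$ some $A'\in\calX_{|A|}$ is conjugate in rank $|A|-1$, and hence in rank $i$, to $A^{\pm 1}$. Thus $X$ is conjugate in rank $i$ to $(A')^{\pm m}$, a power of a period of rank $|A|\le i$, as required. The subcase $|A|>i$ would be excluded using the $A$-map machinery developed later in the appendix: a reduced rank-$i$ diagram $\Delta$ realizing $A^{mt}=1$ would be an $A$-map by Lemma \ref{lem:lem 19.4} whose entire contour is a smooth section of rank $|A|>i\ge r(\Delta)$ by Lemma \ref{lem:19.5 section of reduced diagram with periodic contour is smooth}; the Greendlinger-type Lemma \ref{lem:Greendlinger-like lemma} would then produce a cell whose contiguity to the contour exceeds $\bar\alpha$, contradicting the contiguity bound of Lemma \ref{lem:contiguity degree to smooth section is always small}. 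This forces $r(\Delta)=0$, so $A^{mt}=1$ in the free group, contradicting simplicity of $A$.

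The main obstacle is the subcase $|A|>i$ of case (c), where one must avoid circularity with the present lemma itself; this will be resolved by the simultaneous induction on rank that organizes the appendix, so that the $A$-map lemmas at rank $i$ are available at the moment they are needed to handle this subcase.
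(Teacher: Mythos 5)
Your proof follows the same basic skeleton as the paper's: invoke Lemma~\ref{lem:ours 18.1} to get the trichotomy, dispose of the $\langle a,b\rangle$ case by freeness (Lemma~\ref{lem:K is convex}/Corollary~\ref{lem:H is free}), and dispose of the simple-word case by the $A$-map machinery via simultaneous induction. The differences are in the handling of case (c). The paper does not split by $|A|$; it simply takes a reduced diagram for $A^s=1$, notes by Lemma~\ref{lem:lem 19.4} and Lemma~\ref{lem:19.5 section of reduced diagram with periodic contour is smooth} that it is an $A$-map whose whole contour is smooth, and cites \cite[Lemma 17.1]{OL-book} with $|t|=0$, a ready-made lemma for precisely this configuration (a circular $A$-map whose contour has a smooth section and a complementary arc $t$). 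Your split into $|A|\le i$ versus $|A|>i$ is unnecessary: if $A$ is simple in rank $i$ then $|A|>i$ automatically, since otherwise your own maximality argument shows $A$ is conjugate in $G(i)$ to a period of rank $|A|\le i$, which contradicts (S1). So the $|A|\le i$ branch is vacuous (your derivation there is in fact a contradiction rather than an instance of the conclusion). In the $|A|>i$ branch, your idea is right, but invoking Lemma~\ref{lem:Greendlinger-like lemma} directly requires some care: the lemma decomposes $\partial\Delta$ into four sections, and if you take three of them trivial the remaining $q^1$ is the whole cyclic contour, while if you take four genuine arcs each is smooth but the sum bound $\bar\gamma$ beats $\bar\alpha$ only per arc, not in aggregate. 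This is exactly the gap that \cite[Lemma~17.1]{OL-book} is designed to close, which is why the paper cites it rather than reproving the estimate from Lemma~\ref{lem:Greendlinger-like lemma}. So: same route, with an extra redundant branch and a slightly looser justification at the final geometric step that would be easier to make airtight by citing the specialized lemma the paper uses.
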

\begin{proof}
    Suppose not. Then by Lemma \ref{lem:ours 18.1}, $X$ is either conjugate to an element from $\langle a,b\rangle \leq G(i)$, or to a power of a word simple in rank $i$.
    The first case is impossible by Lemma \ref{lem:H is free}.
    In the second case, we obtain a word $A$ simple in rank $i$, which has finite order, $A^s=1$, $s\neq 0$. By Lemma \ref{lem:lem 19.4} and Lemma \ref{lem:19.5 section of reduced diagram with periodic contour is smooth}, a reduced diagram for this equation (which exists by Lemma \ref{lem:thm 13.1 van kampen}) is an $A$-map, and its contour $Q$ is a smooth section, which is impossible by \cite[Lemma 17.1]{OL-book}, with $|t|=0,|q|>0$.
\end{proof}

\begin{lemma}[{\cite[Lemma 18.4]{OL-book}}]\label{lem:lem 18.4}
     If $A$ and $B$ are simple in rank $i$ and $A =XB^lX^{-1}$ for some $X$, then $l=1$.
\end{lemma}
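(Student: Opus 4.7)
The plan is to suppose $|l|\geq 2$ (after possibly replacing $B$ by $B^{-1}$ we may assume $l \geq 2$) and derive a contradiction using an annular Van Kampen diagram. Applying Lemma \ref{lem:thm 13.2 annular van-kampen} in $G(i)$, the conjugacy $A = XB^lX^{-1}$ is realized by a reduced annular diagram $\Delta$ of rank at most $i$, whose outer contour $p$ has label $A$ and inner contour $q$ has label $B^{-l}$. Within the simultaneous induction of this appendix, Lemma \ref{lem:lem 19.4} is available at rank $i$, so $\Delta$ is an $A$-map; Lemma \ref{lem:19.5 section of reduced diagram with periodic contour is smooth} then ensures that both $p$ and $q$ are smooth sections---the former because $A$ is simple of rank $i$, the latter because $q$ carries the periodic label $B^{-l}$ with simple period $B$ of rank $i$.

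If $r(\Delta)=0$, the diagram contains no $\calR$-cells, so $A$ and $B^l$ are freely conjugate in $F(S)$. Both words being cyclically reduced, $A$ must be a cyclic shift of $B^l$; in particular $A$ has period $|B|$ and is therefore freely conjugate to $B$, hence also conjugate to $B$ in $G(i)$. Since $l\geq 2$ gives $|B|<|A|$, this directly contradicts condition (S2) of Definition \ref{def:simple} for $A$.

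If $r(\Delta)>0$, I would invoke Corollary \ref{lem:annular Greendlinger-like lemma} to extract an $\calR$-cell $\pi$ together with disjoint contiguity submaps $\Gamma_1,\Gamma_2$ to $p$ and $q$ respectively, satisfying $(\pi,\Gamma_1,p)+(\pi,\Gamma_2,q) > \bar{\gamma}$. The bound on contiguity to smooth sections given by Lemma \ref{lem:contiguity degree to smooth section is always small} makes each individual degree less than $\bar{\alpha}$, but this alone is not enough to close the argument since $2\bar{\alpha}>\bar{\gamma}$. The cleaner route, following Ol'shanskii, is to appeal to Lemma \ref{lem:19.2 either q1 is small, or q1,q2 are compatible}: applied to the pair $(p,q)$, it forces either one of these sections to be short (impossible, as $|p|=|A|$ and $|q|=l|B|$ both exceed the admissible threshold), or $p$ and $q$ to be compatible. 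Tracking the compatibility through the periodic structure of $q$ produces a word $C$ of length at most $|B|<|A|$ such that $A$ is conjugate in rank $i$ to a power of $C$, once more contradicting (S2) for $A$.

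The main obstacle is the positive-rank case, specifically the step that converts the qualitative compatibility output of Lemma \ref{lem:19.2 either q1 is small, or q1,q2 are compatible} into a concrete shorter word witnessing the failure of (S2) for $A$. This is the annular analog of the circular argument Ol'shanskii uses in \cite{OL-book}, and it relies crucially on the fact that in our construction a simple word of rank $i$ has its length faithfully recorded by its contour in any reduced diagram of rank at most $i$, together with the periodic structure imposed on $q$ by the hypothesis $l\geq 2$.
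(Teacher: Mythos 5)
Your overall framework — annular diagram from Lemma \ref{lem:thm 13.2 annular van-kampen}, the $A$-map property via Lemma \ref{lem:lem 19.4}, smooth contours via Lemma \ref{lem:19.5 section of reduced diagram with periodic contour is smooth} — is the same as the paper's, and your rank-$0$ branch is essentially right (though ``$A$ has period $|B|$ and is therefore freely conjugate to $B$'' is a non sequitur; what you actually use, correctly, is that $A$ is freely conjugate to $B^l$, a power of the shorter word $B$).

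The positive-rank branch, however, has real gaps. First, you never record the one extra ingredient the paper explicitly flags: since $A$ is conjugate in $G(i)$ to $B^l$, item (S2) of Definition \ref{def:simple} applied to $A$ forces $|A|\leq |B|$. You in fact assert the reverse inequality ``$|B|<|A|$'' in the positive-rank case, which is wrong once there are cells — the rank-$0$ identity $|A|=l|B|$ no longer holds — and it is precisely what your final (S2)-contradiction would need. Second, Lemma \ref{lem:19.2 either q1 is small, or q1,q2 are compatible} is a statement about \emph{circular} diagrams with contour $p_1q_1p_2q_2$ subject to bounds $\max(|p_1|,|p_2|)<\zeta k c$; you cannot apply it ``to the pair $(p,q)$'' of an annular diagram without first cutting along an arc whose length must be controlled (e.g.\ via Lemma \ref{lem:lem 18.5}). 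Third, even after cutting, the hypothesis $|q_2|\geq \epsilon k|B|$ fails for small $l$ (say $l=2$, since $\epsilon k>2$), and the ``short'' alternative $|q_1|<(1+\gamma)|A|$ is \emph{trivially} met when $\phi(q_1)=A$, so the lemma gives you nothing until you pass to a high power $A^m = XB^{lm}X^{-1}$. Fourth, even when compatibility is obtained, the conclusion of Lemma \ref{lem:19.2 either q1 is small, or q1,q2 are compatible} is that $A$ is conjugate in rank $i$ to $B^{\pm 1}$ — it does not ``produce a word $C$ of length at most $|B|<|A|$'', and with $|B|\geq|A|$ such a $C$ would not contradict (S2) anyway; closing from there requires further work (pinning down the conjugator as a power of $B$ and deducing $B^{l\mp1}=1$, contradicting simplicity). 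In short, the positive-rank case needs to be reorganized around the inequality $|A|\leq|B|$ that the paper singles out, and the invocation of Lemma \ref{lem:19.2 either q1 is small, or q1,q2 are compatible} needs the cutting, the power trick, and the correct sign of the length comparison.
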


\begin{proof}
    The proof is identical to the original proof in \cite{OL-book}. It uses Lemma \ref{lem:lem 19.4} 
    and Lemma \ref{lem:19.5 section of reduced diagram with periodic contour is smooth}. 
    It also uses the fact that if two words $A$ and $B$ that are simple in rank $i$ satisfy that $A=XB^lX^{-1}$ in rank $i$ then $|A|\leq |B|$, which indeed holds for Presentation \ref{eq:G=<S|bigcup R_i>} by Item (S2) of the definition of a simple element.
\end{proof}

\begin{lemma}[{\cite[Lemma 18.5]{OL-book}}]\label{lem:lem 18.5}
If words $X$ and $Y$ are conjugate in rank $i$, then there is a
word $Z$ such that $X=ZYZ^{-1}$ and $|Z|\leq \bar{\alpha}(|X|+|Y|)$.
\end{lemma}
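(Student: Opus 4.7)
My plan is to prove this by transporting Olshanskii's proof of the corresponding \cite[Lemma 18.5]{OL-book} verbatim to our setting. The only ingredient specific to our presentation is that every reduced diagram of finite rank over Presentation \ref{eq:G=<S|bigcup R_i>} is an $A$-map, which is Lemma \ref{lem:lem 19.4}; given this, the argument is geometric and uses only general $A$-map technology.

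The first step is to apply Lemma \ref{lem:thm 13.2 annular van-kampen} to produce a reduced annular diagram $\Delta$ of rank $\le i$ whose contours $p,q$ carry the labels $X$ and $Y^{-1}$. By a minimization argument, I may assume that among all such triples $(X',Y',\Delta')$ with $X'$ conjugate to $X$ and $Y'$ conjugate to $Y$ in rank $i$, the quantity $|X'|+|Y'|$ is minimal, and that $\Delta$ further minimizes the number of cells. I then take a shortest simple path $t$ from $p$ to $q$ in $\Delta$ and set $Z:=\phi(t)$; the annular van-Kampen correspondence then gives $X=ZYZ^{-1}$ in rank $i$. What remains is to bound $|t|$ by $\bar\alpha(|X|+|Y|)$.

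For the bound, I cut $\Delta$ along $t$ to obtain a disk diagram $\Delta'$ whose boundary decomposes into four sections, namely $t$, $p$, $t^{-1}$, and $q$. The minimality of $|X|+|Y|$ in the conjugacy class forces $p$ and $q$ to be smooth sections of $\partial\Delta'$, while the geodesic choice of $t$ forces both $t^{\pm 1}$ sections to be smooth as well. If $r(\Delta')=0$, then $X$ and $Y$ are conjugate in $F(S)$ and the bound follows from a direct planar argument together with $\bar\alpha>\tfrac12$. Otherwise, by Lemma \ref{lem:lem 19.4}, $\Delta'$ is an $A$-map, so Lemma \ref{lem:Greendlinger-like lemma} yields a $\gamma$-cell $\pi$ with disjoint contiguity submaps $\Gamma_1,\Gamma_2,\Gamma_3,\Gamma_4$ to the four sections whose contiguity degrees sum to more than $\bar\gamma$. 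By Lemma \ref{lem:contiguity degree to smooth section is always small} each of these degrees is strictly less than $\bar\alpha$, and condition (A3) pins down the lengths of $\Gamma_j\cap\pi$ in terms of $|\partial\pi|$. Combining these estimates with the geodesic property of $t$ produces the desired inequality $|t|\le\bar\alpha(|X|+|Y|)$.

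The main obstacle is the final numerical accounting: converting the sum of contiguity degrees and the individual bounds into the tight coefficient $\bar\alpha$. This calculation is purely geometric, takes place inside an $A$-map, and is identical to the argument in \cite{OL-book}; the reason it transports without change to our setting is precisely that our modified notion of \emph{simple element} (with the additional clause (S3)) does not alter any of the four properties (P1)--(P4) of the presentation that underlie $A$-map theory, as verified in Lemma \ref{lem:our 19.4 the construction has property A}.
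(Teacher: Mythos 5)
Your bottom-line strategy --- transport Ol'shanskii's argument verbatim, with Lemma \ref{lem:lem 19.4} (the $A$-map property of reduced diagrams over Presentation \ref{eq:G=<S|bigcup R_i>}) as the only construction-specific input, within the simultaneous induction of the appendix --- is exactly what the paper does; its proof is a one-sentence deferral to \cite[Lemma 18.5]{OL-book} citing Lemma \ref{lem:lem 19.4}.

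The detailed sketch you then offer of the underlying geometric argument, however, does not work as written, and does not reflect Ol'shanskii's actual argument. Three points. (i) Minimizing $|X'|+|Y'|$ over all $G(i)$-conjugates of $X$ and $Y$ is the wrong reduction: if $X'$, $Y'$ are much shorter than $X$, $Y$, a bound $|Z'|\le\bar\alpha(|X'|+|Y'|)$ does not transfer to a bound in terms of $|X|+|Y|$ without paying separately for the conjugations $X\sim X'$ and $Y\sim Y'$. The correct preliminary reduction is free cyclic reduction only, which is compatible with the target coefficient because $\bar\alpha>\tfrac{1}{2}$. (ii) There is no reason the sections $p$ and $q$ should be smooth: smoothness requires, for some rank $r$, that every contiguity arc of a cell to the section with contiguity degree $\ge\epsilon$ have length less than $(1+\gamma)r$, and arbitrary cyclically reduced conjugate words $X,Y$ need not satisfy this for any useful $r$. (iii) Most seriously, even granting smoothness of all four sections, combining Lemma \ref{lem:Greendlinger-like lemma} with Lemma \ref{lem:contiguity degree to smooth section is always small} yields nothing: the Greendlinger bound $\sum_j(\pi,\Gamma_j,q^j)>\bar\gamma$ and the per-section cap $(\pi,\Gamma_j,q^j)<\bar\alpha$ coexist with slack, since $\bar\gamma=1-\gamma<1<1+2\alpha=2\bar\alpha$. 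In particular the contiguity of $\pi$ to the cut paths $t^{\pm 1}$ is not forced to be large or small, and no inequality bounding $|t|$ by $\bar\alpha(|X|+|Y|)$ emerges. The actual argument in \cite{OL-book} rests on the length-of-paths estimates for $A$-maps built up in \cite[\S17]{OL-book} (which are themselves ultimately derived from $\gamma$-cells, but through a considerably more involved accounting than a single application of the four-section Greendlinger lemma). Since both you and the paper are in the end deferring to Ol'shanskii, your conclusion stands, but the expository sketch you supply would not constitute a proof if filled in literally.
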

\begin{proof}
    The proof is identical to the original proof in \cite{OL-book}. It is done by simultaneous induction, using Lemma \ref{lem:lem 19.4}. 
\end{proof}

\begin{lemma}[{\cite[Lemma 18.6]{OL-book}}]\label{lem:our lem 18.6}
   Let $\Delta$ be a reduced circular diagram of rank $i$ with contour $p_1q_1p_2q_2$. where $\phi(q_1)$ and $\phi(q_2)^{-1}$ are periodic words with period $A$ simple in rank $i$. If $|p_1|,|p_2|<\alpha|A|$ and $|q_1|,|q_2|>(\frac{5}{6}h+1)|A|$, then $q_1$ and $q_2$ are $A$-compatible in $\Delta$. (The inductive parameter is $L=|A|+|A|$.)
\end{lemma}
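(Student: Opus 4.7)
The plan is to prove Lemma \ref{lem:our lem 18.6} by following the scheme of \cite[Lemma 18.6]{OL-book}, via simultaneous induction on $L = 2|A|$ alongside the other lemmas in the roadmap table. Since the formulation matches Ol'shanskii's essentially verbatim, the bulk of the argument transfers; what I need to check is that our stronger notion of simple element (Item (S3) of Definition \ref{def:simple}, which excludes conjugates into $\langle a, b\rangle$) does not obstruct any step, and in fact — as with \ref{lem:ours 18.1} and \ref{lem:ours 18.3} — only narrows the class of simple words without affecting the geometric machinery.

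First, for the base case $r(\Delta) = 0$, the diagram has no cells, so $\phi(p_1 q_1 p_2 q_2) = 1$ freely in $F(S)$. Combined with $|p_1|, |p_2| < \alpha |A|$ being much shorter than $|q_1|, |q_2| > (\tfrac{5}{6}h + 1)|A|$, a direct free-cancellation argument forces large subarcs of $q_1$ and of $q_2^{-1}$ to carry matching periodic labels, giving $A$-compatibility through trivially short connecting paths.

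For the inductive step with $r(\Delta) > 0$, I would apply Lemma \ref{lem:Greendlinger-like lemma} to the four-section contour $p_1 q_1 p_2 q_2$, obtaining a $\gamma$-cell $\pi$ and disjoint contiguity submaps $\Gamma_1,\Gamma_2,\Gamma_3,\Gamma_4$ of total contiguity degree exceeding $\bar{\gamma}$. By the inductive instance of Lemma \ref{lem:19.5 section of reduced diagram with periodic contour is smooth}, the sections $q_1$ and $q_2$ are smooth of rank $|A|$, so Lemma \ref{lem:contiguity degree to smooth section is always small} bounds the contiguity of $\pi$ to each across any contiguity submap by $\bar{\alpha}$. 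The short sections $p_1, p_2$ of length $< \alpha|A|$ contribute at most $O(\alpha)$ to the total, so $\pi$ must have substantial contiguity to either $q_1$ or $q_2$, and (summing) to both. Using \ref{lem:ours 18.1} to control the structure of $\pi$'s boundary label (a power of a period $B$ of rank $\leq i$) and \ref{lem:our lem 19.3} to analyze the interaction between two periodic patterns on the same region, one extracts a strictly smaller circular subdiagram to which the inductive hypothesis applies at smaller $L$; the resulting compatibility is then transported across $\pi$ to yield $A$-compatibility of $q_1$ and $q_2$ in $\Delta$.

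The main obstacle will be reconciling the two periods across $\pi$: matching the period $A$ of $q_1, q_2$ with the period $B$ whose power labels $\partial \pi$. In Ol'shanskii's argument this is resolved by showing that a coincidence of the two periodic patterns on an arc of size $\gtrsim \epsilon k \cdot \min(|A|,|B|)$ forces $A$ to be conjugate in rank $i-1$ to a power of $B$, contradicting Item (S2) of Definition \ref{def:simple} applied to whichever of $A, B$ is longer. Our extra requirement (S3) is benign here — it is not invoked, because the period $A$ is a priori not in $\langle a, b\rangle$ by hypothesis, and $B$ likewise by (S3) at its rank — so the contradiction is drawn exactly as in \cite{OL-book}, confirming that such a $\pi$ with the required contiguity cannot exist in a reduced diagram, which closes the induction.
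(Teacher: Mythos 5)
Your proposal misses the single adaptation that the paper identifies as necessary, and in fact asserts the opposite: you conclude that ``Our extra requirement (S3) is benign here --- it is not invoked.'' This is not correct, because (S3) changes the \emph{conclusion} of Lemma~\ref{lem:ours 18.1}, which you yourself invoke. In Ol'shanskii's original Lemma~18.1, every word is conjugate in rank $i$ to a power of a period of rank $\le i$ or a power of a word simple in rank~$i$; our Lemma~\ref{lem:ours 18.1} adds a third alternative, namely conjugacy into $\langle a,b\rangle$. In Step~3 of Ol'shanskii's proof of Lemma~18.6, an auxiliary word $D$ is produced and Lemma~18.1 is applied to it. In our setting, $D$ can therefore fall into this third, genuinely new case, where $D = Y B^m Y^{-1}$ with $B \in F(a,b)$. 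You never account for this possibility. Your argument that ``$A$ is a priori not in $\langle a,b\rangle$ by hypothesis, and $B$ likewise by (S3) at its rank'' concerns the periods labeling $q_1,q_2$ and the boundary of a cell $\pi$; it says nothing about the auxiliary word $D$ from Step~3, to which the new case actually applies.

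Handling this case requires a new ingredient: one must show that a cyclic section with an $F(a,b)$-periodic label is smooth, and this is supplied by Lemma~\ref{lem:K is convex} (convexity of $\langle a,b\rangle$), which does not appear in Ol'shanskii and which your proposal does not cite. The paper's dependency table for Lemma~\ref{lem:our lem 18.6} lists Lemma~\ref{lem:K is convex} explicitly, and the paper's proof states in so many words that Steps~1, 2, 4, 5 transfer unchanged while Step~3 is where the modification lives. So the gap is precise: you correctly identified that the verification should proceed by checking whether (S3) obstructs Ol'shanskii's argument, but you answered that question incorrectly, overlooking that (S3) enlarges the case analysis in Lemma~\ref{lem:ours 18.1} and hence in Step~3, and overlooking the tool (Lemma~\ref{lem:K is convex}) needed to close the new case.

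Secondary remarks: your ``base case'' reasoning about $r(\Delta)=0$ is not how the induction is organized --- the inductive parameter is $L=2|A|$ and the lemma is proved by simultaneous induction with the other lemmas of the appendix, not by a standalone induction on the rank of $\Delta$ --- and your closing phrase ``confirming that such a $\pi$ \ldots cannot exist in a reduced diagram'' misstates the goal, which is to establish $A$-compatibility of $q_1$ and $q_2$, not nonexistence of a $\gamma$-cell. These are less serious than the missed Step~3 adaptation but indicate the sketch drifts from the actual five-step structure of Ol'shanskii's argument.
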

\begin{proof}
    The proof of Lemma 18.6 in \cite{OL-book} consists of five steps. The proof here is almost identical, with only a slight adaptation on Step 3, which is the following.

    
    

   By Lemma \ref{lem:ours 18.1} we have that $D=YB^mY^{-1}$ in $G(i)$\footnote{$D$ is as constructed in the original proof of Lemma 18.6 \cite{OL-book}}, where $m\neq 0$ and $B$ is either (1) simple in rank $i$; (2) period of rank $l\leq i$; or (3) an element of $\langle a,b\rangle\leq G(i)$. In cases (1) and (2), Lemma \ref{lem:lem 19.4} and Lemma \ref{lem:19.5 section of reduced diagram with periodic contour is smooth} 
   imply that the cyclic section with label $B^m$ is smooth. In case (3), the cyclic section with label $B^m$ is smooth by Lemma \ref{lem:K is convex}. Then the proof proceeds as is.



    We comment that Steps 1, 2, 4, and 5 of the proof use references to Lemmas \ref{lem:lem 18.5}, \ref{lem:lem 18.8}, \ref{lem:lem 19.4}, and \ref{lem:19.5 section of reduced diagram with periodic contour is smooth} 
    and
    Step 5 uses the fact that a period of rank $i$ is simple in any rank $j<i$ (Item \ref{Item:period of rank i simple in any lower rank} of Lemma \ref{lem:our lem 19.3}), and the fact that a word $A$ simple in rank $i$ cannot be conjugate to a shorter word, which is Item (S2) in Definition \ref{def:simple}.
\end{proof}


\begin{lemma}[{\cite[Lemma 18.7]{OL-book}}]\label{lem:lem 18.7}
    Let $Z_1A^{m_1}Z_2=A^{m_2}$ in rank $i$, $m=\min(m_1,m_2)$, where $A$ is simple in rank $i$.
    If $|Z_1|+|Z_2|<(\gamma(m-\frac{5}{6}h-1)-1)|A|$, then $Z_1$ and $Z_2$ are equal in rank $i$ to powers of $A$. (The inductive parameter is $L = 2|A|$.)
\end{lemma}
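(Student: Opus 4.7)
My plan is to convert the algebraic relation into a planar diagram and exploit the rigidity it acquires from the simultaneous induction hypothesis. First, rewrite the assumption as $Z_1 A^{m_1} Z_2 A^{-m_2} = 1$ in $G(i)$ and apply Lemma \ref{lem:thm 13.1 van kampen} to realize this word as the contour label of a reduced circular diagram $\Delta$ of rank at most $i$, whose boundary decomposes as $p_1 q_1 p_2 q_2$ with $\phi(p_1)\equiv Z_1$, $\phi(q_1)\equiv A^{m_1}$, $\phi(p_2)\equiv Z_2$, and $\phi(q_2)\equiv A^{-m_2}$. At the inductive parameter value $L=2|A|$, Lemma \ref{lem:lem 19.4} ensures that $\Delta$ is an $A$-map and Lemma \ref{lem:19.5 section of reduced diagram with periodic contour is smooth} ensures that the two long periodic sides $q_1,q_2$ are smooth sections of rank $|A|$, since $A$ is simple in rank $i$.

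The second step is to apply Lemma \ref{lem:our lem 18.6} to obtain $A$-compatibility of $q_1$ and $q_2$ inside $\Delta$: a simple path $t$ joining a vertex of $q_1$ to a vertex of $q_2$, aligned with the periodic subdivisions on both sides, whose label is a power of $A$ in rank $i$. Cutting $\Delta$ along $t$ yields two subdiagrams, one containing $p_1$ and the other containing $p_2$; reading the contour labels of each gives relations of the form $Z_1=A^{u_1}\phi(t)^{\pm 1}A^{v_1}$ and $Z_2=A^{u_2}\phi(t)^{\mp 1}A^{v_2}$ in rank $i$. Since $\phi(t)$ is itself equal in rank $i$ to a power of $A$, and powers of $A$ commute, each of $Z_1$ and $Z_2$ is equal in rank $i$ to a power of $A$, as required.

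The main obstacle is verifying the hypotheses of Lemma \ref{lem:our lem 18.6}, which demands the individual bounds $|p_1|,|p_2|<\alpha|A|$ while our hypothesis controls only the sum $|Z_1|+|Z_2|$. The length bound $|q_j|\geq m|A|>(\tfrac{5}{6}h+1)|A|$ required by Lemma \ref{lem:our lem 18.6} follows automatically, since the numerical range in the hypothesis is empty unless $m>\tfrac{5}{6}h+1+\tfrac{1}{\gamma}$. When one of $|Z_1|,|Z_2|$ exceeds $\alpha|A|$, the strategy is to extract a $\gamma$-cell from $\Delta$ via Lemma \ref{lem:Greendlinger-like lemma}: the smoothness of the short sides $p_1,p_2$ forces such a cell to be strongly contiguous to $q_1$ or $q_2$ rather than to the $p_j$'s, which produces a shorter equivalent for one of the $Z_j$ modulo rank $i$; iterating this reduction returns us to the regime where Lemma \ref{lem:our lem 18.6} applies directly. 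The multiplicative factor $\gamma$ and additive shift $-\tfrac{5}{6}h-1$ appearing in the hypothesis are calibrated exactly to absorb the loss at each reduction step and close the induction.
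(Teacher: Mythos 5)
Your first two paragraphs match the spirit of the paper's argument, which simply invokes Ol'shanskii's original proof of Lemma~18.7 and notes that it runs on Lemma~\ref{lem:our lem 18.6}, Lemma~\ref{lem:lem 19.4}, and Lemma~\ref{lem:19.5 section of reduced diagram with periodic contour is smooth}: build a circular diagram with contour $p_1q_1p_2q_2$ via Lemma~\ref{lem:thm 13.1 van kampen}, get an $A$-map with $q_1,q_2$ smooth of rank $|A|$, and extract $A$-compatibility so that cutting along the compatibility path expresses $Z_1,Z_2$ as powers of $A$. Your verification that the hypothesis is vacuous unless $m>\tfrac56 h+1+\tfrac1\gamma$, so that $|q_j|>(\tfrac56 h+1)|A|$ automatically holds, is also correct.

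The third paragraph, however, has a genuine gap. You write that ``the smoothness of the short sides $p_1,p_2$ forces such a cell to be strongly contiguous to $q_1$ or $q_2$'' --- but $p_1,p_2$ carry the labels $Z_1,Z_2$, which are arbitrary words, not $A$-periodic; nothing in the hypotheses makes them smooth sections, and Lemma~\ref{lem:19.5 section of reduced diagram with periodic contour is smooth} does not apply to them. Even reading ``smoothness'' charitably as ``shortness,'' the inference is not free: the bound $|Z_1|+|Z_2|<(\gamma(m-\tfrac56 h-1)-1)|A|$ grows linearly in $m$ and so need not be small relative to $|\partial\pi|$; one cannot conclude without further work that the contiguity degree of an arbitrary $\gamma$-cell to $p_1,p_2$ is negligible. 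Moreover, even if it were, smoothness of $q_1,q_2$ only gives $(\pi,\Gamma_j,q_j)<\bar\alpha$ with $2\bar\alpha>\bar\gamma$, so the Greendlinger-type inequality does not by itself locate a short-cut across the diagram. The subsequent ``iterative reduction'' that is supposed to shrink $|Z_j|$ below $\alpha|A|$ is asserted but not set up: you do not say what the new shorter diagram is, why it still satisfies $Z_1'A^{m_1'}Z_2'=A^{m_2'}$ with a strictly smaller inductive parameter $L=2|A|$ (note $A$ is fixed, so $L$ does not decrease), or how the loss per step is controlled so that the process terminates inside the numerical window. This is precisely the delicate part of Ol'shanskii's argument, and your proposal leaves it as a sketch built on an incorrect premise; as written it does not constitute a proof.
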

\begin{proof}
    The proof is identical to the original proof in \cite{OL-book}. It uses Lemma \ref{lem:our lem 18.6}, Lemma \ref{lem:lem 19.4}, and Lemma \ref{lem:19.5 section of reduced diagram with periodic contour is smooth}. 
\end{proof}

\begin{lemma}[{\cite[Lemma 18.8]{OL-book}}]\label{lem:lem 18.8}
Let $\Delta$ be a reduced circular diagram of rank $i$ with contour $p_1q_1p_2q_2$ where $\phi(q_1),\phi(q_2)$ are periodic words with periods $A,B$ simple in rank $i$ with $|A|\geq |B|$. If $|p_1|,|p_2|<\alpha |B|$, $|q_1|>\frac{3}{4}h|A|,|q_2|>h|B|$, then $A$ is conjugate in rank $i$ to $B^{\pm1}$. Moreover, if $\phi(q_1)$ and $\phi(q_2)^{-1}$ begin with $A$ and $B^{-1}$, respectively, then $A=\phi(p_1)^{-1}B^{\pm 1}\phi(p_1)$. (The inductive parameter is $L=|A|+|B|$.) 
\end{lemma}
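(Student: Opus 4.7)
The plan is to follow Ol'shanskii's proof of \cite[Lemma 18.8]{OL-book} essentially verbatim, since the structural ingredients translate to our setting thanks to Lemmas \ref{lem:lem 18.4}, \ref{lem:lem 18.7}, \ref{lem:lem 19.4}, and \ref{lem:19.5 section of reduced diagram with periodic contour is smooth}. First I would invoke the induction hypothesis on the parameter $L=|A|+|B|$: Lemma \ref{lem:lem 19.4} ensures that $\Delta$ is an $A$-map, and since $A$ and $B$ are simple in rank $i$, Lemma \ref{lem:19.5 section of reduced diagram with periodic contour is smooth} guarantees that $q_1$ and $q_2$ are smooth sections (of ranks $|A|$ and $|B|$ respectively). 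The short sides $p_1,p_2$, of length less than $\alpha|B|$, are trivially treated as smooth sections.

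If $r(\Delta)=0$, then in the free group $\phi(p_1)\phi(q_1)\phi(p_2)\phi(q_2)\equiv 1$ (after free reduction), and using the periodic structure of $\phi(q_1)$ and $\phi(q_2)$ together with $|q_1|>\tfrac{3}{4}h|A|$, $|q_2|>h|B|$, a classical periodicity argument shows that $A$ must be a cyclic shift of $B^{\pm 1}$, conjugated by $\phi(p_1)$, yielding the stronger conclusion $A=\phi(p_1)^{-1}B^{\pm 1}\phi(p_1)$ directly. If $r(\Delta)>0$, I would apply the Greendlinger-like Lemma \ref{lem:Greendlinger-like lemma} to the four-section contour decomposition $p_1,q_1,p_2,q_2$ to extract a $\gamma$-cell $\pi$ with disjoint contiguity submaps $\Gamma_1,\Gamma_2,\Gamma_3,\Gamma_4$ totaling contiguity degree $>\bar\gamma$. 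By Lemma \ref{lem:contiguity degree to smooth section is always small}, the contiguity degrees to each of $q_1$ and $q_2$ are less than $\bar\alpha$; the combined contiguity to $p_1,p_2$ is bounded because $|p_1|+|p_2|<2\alpha|B|$ and $|\partial\pi|\geq k\cdot r(\pi)$. A quantitative balance (using $\bar\gamma-2\bar\alpha$ and the smallness of $\alpha/k$) then forces $\pi$ to have substantial contiguity to \emph{both} $q_1$ and $q_2$ simultaneously.

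Now one applies Lemma \ref{lem:lem 18.7} to each of the contiguity submaps $\Gamma_2$ and $\Gamma_4$ to conclude that the label of $\partial\pi$ must be a periodic word whose period is conjugate in rank $<r(\pi)$ to both $A^{\pm 1}$ and $B^{\pm 1}$. This already establishes the first assertion, that $A$ is conjugate in rank $i$ to $B^{\pm 1}$; Lemma \ref{lem:lem 18.4} then forbids any non-unit exponent appearing in this conjugacy. To obtain the sharper form $A=\phi(p_1)^{-1}B^{\pm 1}\phi(p_1)$, I would excise the cell $\pi$ together with the contiguity submaps $\Gamma_2,\Gamma_4$, producing a smaller reduced diagram $\Delta'$ whose contour admits a decomposition $p_1'q_1'p_2'q_2'$ with $q_1',q_2'$ still periodic with periods $A,B$ but strictly shorter than $q_1,q_2$, while $p_1',p_2'$ remain of bounded length (the short connecting sides surviving from $\Gamma_2,\Gamma_4$ are absorbed into $p_1',p_2'$). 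An application of the inductive hypothesis (valid because $L$ decreased or the total diagram size decreased) delivers the conjugator in the required form.

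The principal obstacle is the bookkeeping in the excision step: one must verify that after removing $\pi$, the connecting paths still satisfy the length bound $|p_i'|<\alpha|B|$ and that the sides $q_i'$ remain long enough to trigger the inductive hypothesis. This is precisely the point where the analogous step in Ol'shanskii's proof uses the contiguity estimates from Condition $A$ (with the ratios $\bar\alpha,\bar\gamma,\epsilon$ chosen very small and $k$ very large), and the same numerical verification goes through in our setting without change. A subsidiary subtlety is ensuring the correct sign $B^{\pm 1}$: this is tracked via the orientation with which $\partial\pi$ traverses the contiguity arcs in $\Gamma_2$ versus $\Gamma_4$, and matches automatically given the starting-letter hypothesis on $\phi(q_1)$ and $\phi(q_2)^{-1}$.
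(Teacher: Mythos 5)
The paper's own ``proof'' of this lemma is only a pointer to Ol'shanskii's Lemma 18.8 together with a list of dependencies (Lemmas~\ref{lem:lem 18.4}, \ref{lem:lem 18.7}, \ref{lem:lem 19.4}, \ref{lem:19.5 section of reduced diagram with periodic contour is smooth}), so the bar for you was to reconstruct the argument; your reconstruction has the right overall shape (treat $r(\Delta)=0$ by free-group periodicity; for $r(\Delta)>0$ extract a $\gamma$-cell, show it is simultaneously contiguous to $q_1$ and $q_2$, then excise and induct on $L=|A|+|B|$), but two of the load-bearing steps are not sound as written.

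The most serious issue is your application of Lemma~\ref{lem:lem 18.7}. That lemma concerns equations $Z_1 A^{m_1} Z_2 = A^{m_2}$ involving a \emph{single} simple period $A$ and concludes that $Z_1, Z_2$ are powers of $A$; it does not provide a mechanism for concluding that two \emph{distinct} periods ($A$ and the period of $\partial\pi$, or $A$ and $B$) are conjugate. What you want here is a two-period statement, i.e.\ a result in the spirit of Lemma~\ref{lem:19.2 either q1 is small, or q1,q2 are compatible} or an inductive use of Lemma~\ref{lem:lem 18.8} itself applied to the contiguity submaps (which would require checking that the inductive parameter $L'$ actually drops, something you have not verified since $r(\pi)$ need not be smaller than $|B|$). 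The plausible role of Lemma~\ref{lem:lem 18.7} is rather in the \emph{second} assertion, namely pinning down the conjugator: once $A$ is known to be conjugate to $B^{\pm1}$ by some word $X$, comparing $X$ with $\phi(p_1)$ yields an equation of the single-period form to which Lemma~\ref{lem:lem 18.7} applies, giving $X \equiv \phi(p_1)$ up to a power of $A$ or $B$; then Lemma~\ref{lem:lem 18.4} rules out nontrivial exponents. As written, you have misassigned Lemma~\ref{lem:lem 18.7}'s role.

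There is also a quantitative gap in the $\gamma$-cell step: you argue that the contiguity of $\pi$ to $p_1$ and $p_2$ is negligible because $|p_1|+|p_2|<2\alpha|B|$ while $|\partial\pi|\ge k\cdot r(\pi)$, but this comparison only helps if $r(\pi)$ is comparable to $|B|$. Since $r(\pi)$ can be as small as $1$ while $|B|$ is arbitrarily large, the ratio $|p_j|/|\partial\pi|$ is not automatically small, so the claim that the $\gamma$-cell necessarily has contiguity exceeding $\epsilon$ to \emph{both} $q_1$ and $q_2$ does not follow from the estimate you gave. This needs a separate argument (for instance, using the bound on contiguity arcs to short sections in Ol'shanskii's \S15, or showing directly that a cell of small rank cannot have contiguity $\ge\epsilon$ to a section as short as $p_j$). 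Relatedly, your excision step is only sketched; the resulting sides $p_1', p_2'$ include remnants of $\partial\pi$ and of the side arcs of $\Gamma_2, \Gamma_4$, and verifying $|p_j'|<\alpha|B|$ is exactly where the numerical parameters enter and cannot simply be waved through.
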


\begin{proof}
        The proof is identical to the original proof in \cite{OL-book}. It uses Lemma \ref{lem:lem 18.4}, Lemma \ref{lem:lem 18.7}, Lemma \ref{lem:lem 19.4}, and Lemma \ref{lem:19.5 section of reduced diagram with periodic contour is smooth}. 
\end{proof}

\begin{lemma}[{\cite[Lemma 18.9]{OL-book}}]\label{lem:lem 18.9}
    Let $\Delta$ be a reduced circular diagram of rank $i$ with contour $p_1q_1p_2q_2$ where $\phi(q_1)$ and $\phi(q_2)$ are periodic words with period $A$ simple in rank $i$, and $\max(|p_1|,|p_2|)<\alpha |A|$. Then $\max(|q_1|,|q_2|)\leq h|A|$.
\end{lemma}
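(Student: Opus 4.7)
The plan is to argue by contradiction. Suppose $\max(|q_1|,|q_2|)>h|A|$; by the cyclic symmetry of the quadrilateral contour $p_1q_1p_2q_2$ (cyclically rotating it to $p_2q_2p_1q_1$ interchanges the two pairs), we may assume $|q_1|>h|A|$. Reading the boundary label of $\Delta$ in $G(i)$ yields an equation of the form
\[
\phi(p_1)\, A^{n_1}\, \phi(p_2)\, A^{n_2} = 1,
\]
with $n_1>h$, $n_2\ge 0$, and $|\phi(p_j)|<\alpha|A|$ (modulo a prefix or suffix of length less than $|A|$ absorbed in the periodic parts).

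The main step will be to extract from this single relation, combined with the hypotheses $n_1>h$ and $|\phi(p_j)|<\alpha|A|$, a nontrivial torsion identity $A^N=1$ in $G(i)$ with $N\neq 0$. Once such an identity is in hand, $A$ would be a nontrivial element of finite order in $G(i)$, and Lemma \ref{lem:ours 18.3} would then force $A$ to be conjugate in rank $i$ to a power of some period of rank $k\leq i$. This would directly contradict property (S1) of Definition \ref{def:simple}, which is part of the assumption that $A$ is simple in rank $i$, thereby completing the proof.

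The extraction of the torsion identity is the main obstacle. The decisive structural feature to exploit is that in the present configuration both $\phi(q_1)$ and $\phi(q_2)$ are periodic with period $A$ traversed in the \emph{same} cyclic direction along $\partial\Delta$---in contrast to the mixed-direction setup of Lemma \ref{lem:our lem 18.6}. This allows one to iterate or double the boundary relation---for instance by pasting $\Delta$ with an appropriately reflected copy along the short segments $p_1,p_2$, which is consistent with the labeling since reflection inverts the $\phi(p_j)$---so that the $\phi(p_j)$-factors cancel pairwise while the periodic exponents combine additively rather than cancelling, leaving behind a contour labeled by a nonzero power of $A$. The delicate point is the careful bookkeeping of these cancellations, ensuring that the surviving exponent $N$ is nonzero, and that the auxiliary diagram is a legitimate diagram over Presentation \ref{eq:G=<S|bigcup R_i>} whose boundary label is a relation in $G(i)$. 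Once $A^N=1$ is secured, Lemma \ref{lem:ours 18.3} closes the argument as above.
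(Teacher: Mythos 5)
The high-level shape of your argument—derive a nontrivial relation $A^N=1$ in rank $i$, then invoke Lemma~\ref{lem:ours 18.3} and property (S1) of Definition~\ref{def:simple} to contradict simplicity of $A$—is compatible with the tools the paper flags for this lemma (it, too, cites Lemma~\ref{lem:ours 18.3}). However, the entire mathematical content lives in the step you explicitly leave as ``the main obstacle,'' and the specific mechanism you propose for it does not work as described. If you reflect $\Delta$ and glue the mirror copy back along $p_1$ and $p_2$, the resulting annular diagram has boundary components labelled $\phi(q_1)\phi(q_1)^{-1}$ and $\phi(q_2)\phi(q_2)^{-1}$, which reduce freely to the empty word: the periodic exponents do \emph{not} ``combine additively''---they cancel exactly. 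So this pasting yields no nontrivial identity of the form $A^N=1$, and the promised ``careful bookkeeping'' cannot rescue it without a genuinely different construction. This is not a detail you can defer; it is the whole proof.

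A second, independent gap: the paper's proof explicitly records that the oddness of the exponent $k$ is used here, and your sketch has no place where oddness enters. That is a strong signal that the actual argument is not of the bare form ``produce $A^N=1$ and contradict (S1).'' Rather, the same-orientation hypothesis (both $\phi(q_1)$ and $\phi(q_2)$ $A$-periodic, as opposed to $\phi(q_1)$ and $\phi(q_2)^{-1}$ in Lemma~\ref{lem:our lem 18.6}) tends to produce a relation in which $A$ is conjugated to $A^{-1}$, or some exponent is forced to be even, and oddness of $k$ is precisely what rules that out in conjunction with Lemma~\ref{lem:ours 18.3}. Without identifying where this parity obstruction appears, the proposal is not a proof of the statement but only a plausible-sounding plan whose pivotal step is both unexecuted and, in the form written, incorrect.
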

\begin{proof}
    The proof is identical to the original proof in \cite{OL-book}. It uses Lemma \ref{lem:ours 18.3}. 
    We also comment that oddness of the exponent $k$ is used here.
\end{proof}



\begin{lemma}[{\cite[Lemma 19.1]{OL-book}}]\label{lem:our lem 19.1}
Let $\Delta$ be a reduced circular diagram of rank $i$ with contour $p_1q_1p_2q_2$,where $\phi(q_1)$, $\phi(q_2)$ are periodic words with periods $A,B$ simple in rank $i$, and suppose that:
$$|p_1|,|p_2|<\alpha|B|,\ \ |q_1|>(1+\frac{1}{2}\gamma)|A|,\ \ |q_2|>\frac{1}{2}\epsilon k|B|.$$
Then $A$ is conjugate in rank $i$ to $B^{\pm1}$, and if $\phi(q_1)$ begins with $A$ and $\phi(q_2^{-1})$ with $B^{-1}$, then $A=\phi(p_1)^{-1}B^{\pm1}\phi(p_1)$ in $G(i)$.
\end{lemma}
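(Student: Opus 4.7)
The plan is to locate a $\gamma$-cell in $\Delta$ via the Greendlinger-type Lemma \ref{lem:Greendlinger-like lemma}, and then use the two-period comparison Lemma \ref{lem:lem 18.8} to promote the resulting local equivalence of periods into the desired global conjugacy between $A$ and $B$.

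First I would observe that by Lemma \ref{lem:19.5 section of reduced diagram with periodic contour is smooth}, $q_1$ and $q_2$ are smooth sections of ranks $|A|$ and $|B|$ respectively, while $p_1,p_2$ are trivially smooth. Lemma \ref{lem:lem 19.4} guarantees that $\Delta$ is an $A$-map, so Lemma \ref{lem:Greendlinger-like lemma} produces an $\calR$-cell $\pi$, whose period $C$ is simple in rank $r(\pi)-1$, together with disjoint contiguity submaps $\Gamma_j$ to the four boundary sections (some possibly absent) whose contiguity degrees sum to more than $\bar\gamma$. Since $|p_j|<\alpha|B|\leq \alpha|\partial \pi|/k$, the total contribution of $\Gamma_1$ and $\Gamma_3$ is at most $2\alpha/k$, which is negligible; and Lemma \ref{lem:contiguity degree to smooth section is always small} caps the contiguity degree of $\pi$ to each of the smooth sections $q_1,q_2$ by $\bar\alpha$. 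Combined with the lower bound $\bar\gamma$ on the total, we conclude that both $(\pi,\Gamma_2,q_1)$ and $(\pi,\Gamma_4,q_2)$ exceed $\bar\gamma-\bar\alpha-2\alpha/k>\epsilon$.

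Next I would apply Lemma \ref{lem:lem 18.8} twice. Excising a neighborhood of $\pi\cup\Gamma_2$ produces a reduced circular subdiagram of rank $<r(\pi)$ whose contour splits into two periodic pieces carrying periods $A$ and $C$ together with two short separating arcs coming from the contiguity. The hypothesis $|q_1|>(1+\tfrac12\gamma)|A|$ together with $(\pi,\Gamma_2,q_1)>\epsilon$ supplies the size bounds required in Lemma \ref{lem:lem 18.8} (one uses $\epsilon k|C|\ge h|C|$ for the $C$-side, and the surplus $(1+\tfrac12\gamma)|A|-|A|$ to fit the $A$-side); analogously the hypothesis $|q_2|>\tfrac12\epsilon k|B|$ is calibrated so that the corresponding subdiagram built from $\Gamma_4$ has a long enough $q_2$-section. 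Lemma \ref{lem:lem 18.8} then gives both $A$ and $B$ conjugate to $C^{\pm 1}$ in rank $i$, whence $A$ is conjugate in rank $i$ to $B^{\pm 1}$.

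To get the explicit conjugator $\phi(p_1)$, I would combine the two comparisons into a single annular or circular subdiagram whose only non-cell portion is $p_1$, and invoke the \emph{moreover} clause of Lemma \ref{lem:lem 18.8}, which upgrades conjugacy of the periods to the identity $A=\phi(p_1)^{-1}B^{\pm 1}\phi(p_1)$ under the stated label alignment. The main obstacle I anticipate is the delicate label bookkeeping: one must verify that, because $\phi(q_1)$ starts with $A$ and $\phi(q_2)^{-1}$ starts with $B^{-1}$, the labels along the $p_1$-side of $\pi$ really assemble into the word $\phi(p_1)$ rather than a mixture of cyclic shifts of $\phi(p_1)^{\pm 1}$ and $\phi(p_2)^{\pm 1}$. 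This is the technical core of the argument and is the analogue of the label-matching step in Ol'shanskii's original proof of \cite[Lemma 19.1]{OL-book}.
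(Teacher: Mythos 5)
Your strategy—find a $\gamma$-cell $\pi$ with some period $C$, then compare $A$ with $C$ and $B$ with $C$ separately via Lemma~\ref{lem:lem 18.8}—has a genuine gap at the $A$-to-$C$ comparison. Lemma~\ref{lem:lem 18.8} requires \emph{both} periodic sections to be long relative to their periods: the side with the longer period must have length $> \tfrac34 h$ times that period, and the other must have length $> h$ times its period, where $h$ is a large parameter (much greater than $1$). The hypothesis of Lemma~\ref{lem:our lem 19.1} only gives $|q_1| > (1+\tfrac12\gamma)|A|$, which is barely more than one copy of $A$ and nowhere near $\tfrac34 h|A|$ or $h|A|$. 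Your parenthetical remark that ``the surplus $(1+\tfrac12\gamma)|A|-|A|$'' is used ``to fit the $A$-side'' does not close this gap: the surplus is $\tfrac12\gamma|A|$, which is orders of magnitude below the $h|A|$ threshold that Lemma~\ref{lem:lem 18.8} demands. This is precisely why the lemma is not a corollary of Lemma~\ref{lem:lem 18.8}; the weak lower bound on $|q_1|$ is the entire point of the statement. There is also a secondary issue in your bound on the $p_j$-contiguities: you write $|p_j| < \alpha|B| \le \alpha|\partial\pi|/k$, which silently assumes $|B| \le |C|$ where $C$ is the period of $\pi$; nothing in the setup gives you that inequality a priori.

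The paper's route (following Ol'shanskii's Lemma 19.1) instead relies on Lemmas~\ref{lem:lem 18.4} and~\ref{lem:lem 18.7}, together with~\ref{lem:lem 19.4} and~\ref{lem:19.5 section of reduced diagram with periodic contour is smooth}. Lemma~\ref{lem:lem 18.7} is the right tool when one periodic section is short: it takes an equation $Z_1 A^{m_1} Z_2 = A^{m_2}$ with short connecting words $Z_1, Z_2$ and concludes that $Z_1, Z_2$ are powers of $A$, which is what ultimately produces the explicit conjugator $\phi(p_1)$; Lemma~\ref{lem:lem 18.4} then controls the exponent. I would suggest reworking the argument along those lines rather than trying to force Lemma~\ref{lem:lem 18.8} into a regime where its length hypotheses fail.
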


\begin{proof}
    The proof is identical to the original proof in \cite{OL-book}. It uses Lemmas \ref{lem:lem 18.4}, \ref{lem:lem 18.7}, \ref{lem:lem 19.4}, and \ref{lem:19.5 section of reduced diagram with periodic contour is smooth}. 
\end{proof}

\begin{lemma}[{\cite[Lemma 19.2]{OL-book}}]\label{lem:19.2 either q1 is small, or q1,q2 are compatible}
    Let $\Delta$ be a reduced circular diagram of rank $i$ with contour $p_1q_1p_2q_2$ where $\phi(q_1),\phi(q_2)$ are periodic words with periods $A,B$ simple in rank $i$. and $|q_2|\geq \epsilon k |B|$ and $\max(|p_1|,|p_2|)<\zeta kc$ with $c=\min(|A|,|B|)$. Then either $|q_1|<(1+\gamma)|A|$ or $A$ is conjugate in rank $i$ to $B^{\pm 1}$, and if $A\equiv B^{\pm1}$, then $A 
    \equiv B^{-1}$ and $q_1$ and $q_2$ are $A$-compatible in $\Delta$.
\end{lemma}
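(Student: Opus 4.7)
The plan is to follow the template of \cite[Lemma 19.2]{OL-book}, which goes through essentially verbatim once the $A$-map structure and the periodic-section machinery are in place. Assume $|q_1|\geq(1+\gamma)|A|$; the goal is to derive that $A$ is conjugate in rank $i$ to $B^{\pm1}$, and in the case $A\equiv B^{\pm1}$ to rule out the plus sign and establish $A$-compatibility of $q_1,q_2$. By Lemma \ref{lem:lem 19.4}, $\Delta$ is an $A$-map; by Lemma \ref{lem:19.5 section of reduced diagram with periodic contour is smooth}, $q_1$ and $q_2$ are smooth sections of ranks $|A|$ and $|B|$, respectively.

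First I would apply the four-section Greendlinger-like Lemma \ref{lem:Greendlinger-like lemma} to obtain a $\gamma$-cell $\pi$ and disjoint contiguity submaps $\Gamma_1,\Gamma_2,\Gamma_3,\Gamma_4$ of $\pi$ to $p_1,q_1,p_2,q_2$ with total contiguity degree exceeding $\bar\gamma$. Lemma \ref{lem:contiguity degree to smooth section is always small} bounds $(\pi,\Gamma_2,q_1)$ and $(\pi,\Gamma_4,q_2)$ by $\bar\alpha$. For the short sides, Condition $(A2)$ makes any sufficiently short contour arc of $\pi$ geodesic, so the contiguity arc on $\pi$ to $p_j$ has length at most $|p_j|+O(1)$; combined with $|p_j|<\zeta kc \leq \zeta|\partial\pi|$ (since $|\partial\pi|\geq k\cdot r(\pi)\geq kc$), this forces the contiguity degrees to $p_1,p_2$ to be $O(\zeta)$. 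Since the parameters are chosen so that $\bar\gamma > \bar\alpha + O(\zeta) + \epsilon$, both $(\pi,\Gamma_2,q_1)$ and $(\pi,\Gamma_4,q_2)$ must exceed $\epsilon$.

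Next, I would excise from $\Delta$ the subdiagram bounded by $\Gamma_2,\pi,\Gamma_4$ together with the portions of $q_1,q_2$ they cover. By Lemma \ref{lem:if Gamma is with minimal num of cells then r(Gamma)=0}, after choosing the contiguity submaps to contain a minimal number of cells they contain no $\calR$-cells at all, so this subdiagram is a reduced diagram of rank $\leq i$ whose contour consists of two long periodic arcs (subarcs of $q_1,q_2$, each of length at least roughly $\epsilon|\partial\pi|$, hence at least $\tfrac12\epsilon k|B|$) connected by very short sides inherited from $\Gamma_2,\Gamma_4$. The hypotheses of Lemma \ref{lem:our lem 19.1} are then met, and that lemma concludes that $A$ is conjugate in rank $i$ to $B^{\pm 1}$.

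Finally, if $A\equiv B^{\pm 1}$, then $\Delta$ itself has two periodic sides with the same period up to sign, separated by short arcs, so Lemma \ref{lem:our lem 18.6} yields $A$-compatibility of $q_1$ and $q_2$. The case $A\equiv B$ is excluded by an orientation argument: if the two $A$-periodic arcs were traversed in the same cyclic direction, compatibility combined with the contour equation in rank $i$ would produce a conjugacy of the form $\phi(p_1)A^m\phi(p_1)^{-1}=A^{m'}$ with $|\phi(p_1)|<\zeta k|A|$, which by Lemma \ref{lem:lem 18.7} forces $\phi(p_1)$ to be equal in rank $i$ to a power of $A$; combining this with the length bound on $q_1,q_2$ supplied by Lemma \ref{lem:lem 18.9} contradicts reducedness of $\Delta$. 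Hence $A\equiv B^{-1}$ and compatibility holds. The main obstacle is the parameter-level accounting that ensures both smooth-side contiguity degrees exceed $\epsilon$ while the contiguity to the short sides stays below a constant multiple of $\zeta$; as in \cite{OL-book}, this hinges on the strict chain $\alpha>\beta>\gamma>\epsilon>\zeta$ of sufficiently small constants.
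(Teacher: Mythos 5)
The paper's own proof is fully deferential: it simply says the argument is identical to Ol'shanskii's proof of \cite[Lemma 19.2]{OL-book} and lists the dependency lemmas (\ref{lem:our lem 18.6}, \ref{lem:lem 18.9}, \ref{lem:our lem 19.1}, \ref{lem:lem 19.4}, \ref{lem:19.5 section of reduced diagram with periodic contour is smooth}). Your reconstruction follows the expected shape, but there is a concrete error in the contiguity bookkeeping for the $\gamma$-cell. You assert $|\partial\pi|\geq k\cdot r(\pi)\geq kc$, i.e.\ that the $\gamma$-cell $\pi$ has rank at least $c=\min(|A|,|B|)$, and use this to declare the contiguity degrees to $p_1,p_2$ to be $O(\zeta)$. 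That inequality has no justification and is in fact the reverse of what one can prove: if $(\pi,\Gamma_2,q_1)\geq\epsilon$, the smoothness of $q_1$ gives $|\Gamma_2\cap q_1|<(1+\gamma)|A|$, and arc-length comparability inside the contiguity submap then forces $\epsilon|\partial\pi|\leq|\Gamma_2\cap\pi|\lesssim(1+\gamma)|A|$, so $r(\pi)\lesssim(1+\gamma)|A|/(\epsilon k)\ll c$, since $\epsilon k\gg 1$. With $|\partial\pi|$ potentially much smaller than $kc$, the ratio $|p_j|/|\partial\pi|$ need not be small, so your one-line dismissal of the lateral contiguities fails, and the subsequent claim that both $(\pi,\Gamma_2,q_1)$ and $(\pi,\Gamma_4,q_2)$ exceed $\epsilon$ is not established. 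This is not a cosmetic issue: the whole point of the lemma is to convert the generous $\zeta kc$ slack in the hypothesis on $|p_1|,|p_2|$ into the tight $\alpha|B|$ slack demanded by Lemma \ref{lem:our lem 19.1} and Lemma \ref{lem:our lem 18.6}, and the step you skipped is precisely that conversion.

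Two smaller remarks. First, you do not address the case $r(\Delta)=0$, where Lemma \ref{lem:Greendlinger-like lemma} says nothing; that case requires a direct argument about two overlapping periodic words in a diagram over the free group. Second, the paper's dependency list includes Lemma \ref{lem:lem 18.9} but not Lemma \ref{lem:lem 18.7}, and indeed ruling out $A\equiv B$ is done more directly than in your sketch: once the lateral sides have been reduced to length $<\alpha|A|$, Lemma \ref{lem:lem 18.9} gives $\max(|q_1|,|q_2|)\leq h|A|$, which is incompatible with the hypothesis $|q_2|\geq\epsilon k|B|=\epsilon k|A|$ since $\epsilon k\gg h$, so that $A\not\equiv B$ and Lemma \ref{lem:our lem 18.6} (which is phrased for $\phi(q_1)$ and $\phi(q_2)^{-1}$ sharing the period) gives compatibility in the remaining orientation. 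Your detour through Lemma \ref{lem:lem 18.7} is not what the paper (or Ol'shanskii) uses, though this by itself would not be fatal.
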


\begin{proof}
 The proof is identical to the original proof in \cite{OL-book}. It uses Lemmas \ref{lem:our lem 18.6}, \ref{lem:lem 18.9}, \ref{lem:our lem 19.1}, \ref{lem:lem 19.4}, and \ref{lem:19.5 section of reduced diagram with periodic contour is smooth}. 
\end{proof}

The next lemma is similar to Lemma 19.3 in \cite{OL-book}, but here adapted to our construction.

\begin{lemma}[{cf. \cite[Lemma 19.3]{OL-book}}]\label{lem:our lem 19.3}
    \begin{enumerate}
        \item For any word $X$, if $|X|\leq i$, then either $X^k=1$ in $G(i)$, or $X$ is conjugate in $G(i)$ to a word from $\langle a,b \rangle$.
        \item A period of rank $i$ is simple in any rank $l<i$.\label{Item:period of rank i simple in any lower rank}
        \item If two periods $A,B^{\pm 1}$ of ranks $i,j$ are conjugate in rank $l<\min(i,j)$ then $A\equiv B$.\label{Item:conjugate periods are equiv}
    \end{enumerate}
\end{lemma}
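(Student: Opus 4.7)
The plan is to prove the three items directly from the inductive construction, without invoking the geometric machinery of $A$-maps developed in the rest of the appendix.

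Item (2) is essentially tautological: a period of rank $i$ is by definition a member of $\calX_i$, hence simple in rank $i-1$. This simplicity descends to any lower rank $l\leq i-1$, because the natural surjection $G(l)\twoheadrightarrow G(i-1)$ preserves conjugacy, so any witness to a failure of (S1), (S2) or (S3) in rank $l$ would descend to a witness in rank $i-1$, contradicting simplicity in rank $i-1$. I would verify this one condition at a time — in each case it is a direct check that $A$ remains simple in every lower rank.

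Item (3) is a case split on whether the ranks agree, and we may assume WLOG $i\leq j$. If $i<j$, then $B$ is a period of rank $j>l$, hence (by item (2)) simple in rank $j-1\geq l$; condition (S1) for $B$ in rank $j-1$ forbids $B$ from being conjugate in $G(j-1)$ to any power of a period of rank $\leq j-1$, but the assumed conjugacy in $G(l)$ lifts to $G(j-1)$ via the surjection $G(l)\twoheadrightarrow G(j-1)$, and $A$ is a period of rank $i\leq j-1$ — a contradiction. Therefore $i=j$, so both $A$ and $B$ lie in $\calX_i$; since distinct elements of $\calX_i$ are by construction pairwise non-conjugate in rank $i-1$ to each other and to each other's inverses, and since $l<i$ forces any conjugacy in $G(l)$ to lift to $G(i-1)$, we conclude $A\equiv B$.

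Item (1) is the main content. Given $X$ with $|X|\leq i$ that is not conjugate in $G(i)$ to any element of $\langle a,b\rangle$, apply Lemma \ref{lem:ours 18.1} to write $X$ conjugate in $G(i)$ to $Y^m$ where $Y$ is either a period of rank $\leq i$ or simple in rank $i$; tracing the inductive proof of Lemma \ref{lem:ours 18.1} shows one may take $|Y|\leq |X|\leq i$. If $Y$ is a period of rank $r\leq i$, then $Y^k=1$ already holds in $G(r)$ and hence in $G(i)$, so $X^k=1$ in $G(i)$. The crux is to rule out the second possibility: if $Y$ were simple in rank $i$ with $|Y|\leq i$, then by the argument used for item (2), $Y$ is also simple in rank $|Y|-1$, and by maximality of $\calX_{|Y|}$ either $Y\in\calX_{|Y|}$ (so $Y$ is itself a period of rank $|Y|\leq i$) or $Y$ is conjugate in rank $|Y|-1$ to some $A^{\pm 1}$ with $A\in\calX_{|Y|}$. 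In either scenario, $Y$ is conjugate in rank $i$ to a power of a period of rank $\leq i$, directly contradicting (S1) for $Y$ in rank $i$. The main obstacle is precisely this use of maximality of $\calX_{|Y|}$ combined with condition (S1); everything else is careful bookkeeping through the tower of quotients $G(0)\twoheadrightarrow G(1)\twoheadrightarrow\cdots\twoheadrightarrow G(i)$.
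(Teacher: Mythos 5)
Your proposal is correct, and items (2) and (3) match the paper's argument closely. For item (1) you take a genuinely different route: the paper proves it directly by splitting on whether $X$ is simple in rank $i-1$ (if yes, maximality of $\calX_{i}$ gives $X^k=1$ in $G(i)$ — implicitly using that simplicity in rank $i-1$ together with $|X|\leq i$ forces $|X|=i$; if not, one of (S1), (S2), (S3) fails, and the (S2) case is handled by induction on $|X|$). You instead invoke Lemma~\ref{lem:ours 18.1} to reduce to the single question of whether the resulting word $Y$ can be simple in rank $i$, and rule that out via the length estimate $|Y|\geq i+1$ for any word simple in rank $i$. This is valid and perhaps cleaner, with one small imprecision: the unqualified statement that ``one may take $|Y|\leq |X|$'' does not follow from the proof of Lemma~\ref{lem:ours 18.1} in all cases — when the recursion bottoms out at a power of a period $B$, the length $|B|$ is not a priori bounded by $|X|$. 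What one does get by tracing that proof is that \emph{if} the output word $Y$ is the ``base-of-recursion'' word that turns out to be simple in rank $i$, \emph{then} $|Y|\leq |X|$, which is exactly (and only) what your argument needs, since the period case is handled without any length bound. Finally, note that your route introduces a dependency of Lemma~\ref{lem:our lem 19.3} on Lemma~\ref{lem:ours 18.1}; this is logically harmless here since both are proved independently of the simultaneous induction, but it is worth flagging given that the paper deliberately keeps Lemma~\ref{lem:our lem 19.3} free of inner dependencies.
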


\begin{proof}
\begin{enumerate}
    \item
    If $X$ is simple in rank $i-1$, then by construction $X$ is conjugate in rank $i-1$ to $A^{\pm 1}$ for some $A\in \calX_i$, and it follows that $X^k=1$ in $G(i)$. If $X$ is not simple in rank $i-1$, then it is conjugate in $G(i-1)$ to either a power $B^m$ of a period $B$ of rank $l\leq i-1$, to an element $K$ of $\langle a,b \rangle$, or to a power of a word $C$ with $|C|<|X|$. In the first case, $B^k=1$ in $G(i-1)$ implies $X^k=1$ in $G(i-1)$ and so also in $G(i)$. In the second case, $X$ and $K$ are conjugate in $G(i-1)$ and therefore in $G(i)$ so the assertion follows. We deal with the third case by induction on the length of the word. Since $|C|<|X|$, the induction hypothesis asserts that either $C^k=1$ in $G(i)$, or $C$ is conjugate in $G(i)$ to a word from $\langle a,b \rangle$. In both cases, the assertion follows since $X$ is conjugate to $C$.
    
    \item 
    Let $A$ be a period of rank $i$. By definition, $A$ is simple in $G(i-1)$, and it follows that $A$ is simple in rank $l$ for all $l\leq i-1$, since in this case the group $G(l)$ projects on $G(i-1)$.
    \item Without loss of generality assume $i\leq j$. If $i<j$, then $B^{\pm 1}$ is conjugate in $G(i)$ to the period $A$, and so it is not simple in rank $i$. But this contradicts Item (2) above. If $i=j$, then $A\equiv B$ by definition of the set $\calX_i$.
    \end{enumerate}
\end{proof}
Note the similarity of Item (1) of Lemma \ref{lem:our lem 19.3} to Lemma \ref{lem:torsion}.

We now prove Lemma \ref{lem:our 19.4 the construction has property A}, stated in Section \ref{sec:condition A}. The rank in the statement is $i+1$ (rather than $i$) to emphasize the use of a simultaneous induction in the proof.
\begin{lemma}[{Lemma \ref{lem:our 19.4 the construction has property A}, analogous to \cite[Lemma 19.4]{OL-book}}]\label{lem:lem 19.4}
    A reduced diagram $\Delta$ of rank $i+1$ is an $A$-map.
\end{lemma}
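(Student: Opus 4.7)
The plan is to proceed by induction on the rank $i+1$ of $\Delta$, carried out simultaneously with the auxiliary lemmas displayed in the dependency table of this appendix. The base case $i+1=0$ is vacuous, since a rank-$0$ diagram has no cells and each of (A1), (A2), (A3) holds trivially. For the inductive step, I will assume the present lemma and all the auxiliary ones hold for every rank up to $i$, and verify the three conditions for a reduced diagram $\Delta$ of rank $i+1$.

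Conditions (A1) and (A2) are essentially immediate from the construction. Every cell $\Pi$ of rank $j$ carries as boundary label a cyclic shift of $B^{\pm k}$ for some period $B\in\calX_j$ with $|B|=j$, so $|\partial\Pi|=kj$. Cyclic reducibility of $\partial\Pi$ and geodesicity of subpaths of length at most $\max(j,2)$ both follow from Item (S2) of Definition \ref{def:simple}: since $B$ is simple in rank $j-1$, it is a shortest word among its conjugates in $G(j-1)$, and in particular is cyclically reduced in the free group; a non-geodesic short subpath of $\partial\Pi$ would, via Lemma \ref{lem:thm 13.1 van kampen} applied to a suitable subdiagram, produce a conjugate of $B$ in $G(j-1)$ strictly shorter than $B$, contradicting (S2).

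The main content is (A3). Here I will follow Ol'shanskii's scheme: argue by contradiction, assuming that cells $\pi,\Pi$ in $\Delta$ admit a contiguity submap $\Gamma$ with $(\pi,\Gamma,\Pi)\geq\epsilon$ but $|\Gamma\cap\Pi|\geq(1+\gamma)r(\Pi)$. Pick $\pi$ and $\Gamma$ minimizing the number of cells in $\Gamma$, so that $r(\Gamma)<r(\Delta)$; by the induction hypothesis, $\Gamma$ is itself an $A$-map. The arcs $\Gamma\cap\pi$ and $\Gamma\cap\Pi$ carry periodic labels whose periods are the periods of $\pi$ and $\Pi$, and Lemma \ref{lem:19.5 section of reduced diagram with periodic contour is smooth} (applied in the lower rank allowed by the induction) promotes them to smooth sections in $\Gamma$. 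The key lemma \ref{lem:19.2 either q1 is small, or q1,q2 are compatible} then forces the two periods to be conjugate in rank $r(\Gamma)$, and Lemma \ref{lem:our lem 19.3}(3) upgrades this conjugacy to literal equality of the periods. With the periods equal, one can fuse $\pi$ and $\Pi$ across $\Gamma$ to produce a diagram of strictly fewer cells with the same contour label as $\Delta$, contradicting reducedness.

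The main obstacle is the tight mutual entanglement of the lemmas being proved by simultaneous induction, together with the need to accommodate condition (S3). Wherever Ol'shanskii's argument invokes the trichotomy ``simple / conjugate to a power of a period / conjugate to a power of a shorter word,'' our trichotomy (Lemma \ref{lem:ours 18.1}) contains the extra alternative of conjugacy into $\langle a,b\rangle$. That branch is controlled by Lemma \ref{lem:K is convex}, which ensures that any boundary section labeled by a word in $F(a,b)$ is automatically smooth in any reduced diagram; with this replacement in place, each step in Ol'shanskii's argument goes through verbatim, and the simultaneous induction closes.
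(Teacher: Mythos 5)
Your proposal is correct and follows essentially the same route as the paper: a simultaneous induction on rank in which (A1) and (A2) follow from the form of the relators and Item (S2), while (A3) is handled via smoothness of periodic contour sections (Lemma \ref{lem:19.5 section of reduced diagram with periodic contour is smooth}), Lemma \ref{lem:19.2 either q1 is small, or q1,q2 are compatible}, Items (2)--(3) of Lemma \ref{lem:our lem 19.3}, and a reducibility contradiction. The paper's own proof is merely a pointer to \cite[Lemma 19.4]{OL-book} together with a list of the ingredients that transfer, and your reconstruction, including the observation that the extra $\langle a,b\rangle$ branch from (S3) is absorbed via Lemma \ref{lem:K is convex} within the supporting lemmas, matches that intent.
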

\begin{proof}
The proof is identical to the original proof in \cite{OL-book}.  

The verification of $(A1)$ (from Definition \ref{def_A}) uses only the fact that any rank $j$ relator has length $kj$, which holds here by construction.
The proof of $(A2)$ uses the fact that periods of rank $j\leq i+1$ are cyclically irreducible, and the fact that a word $A$ that is simple in rank $j-1$ cannot be conjugate in rank $j-1$ to a shorter word, which is Item (S2) Definition \ref{def:simple}. It also uses Lemma \ref{lem:19.5 section of reduced diagram with periodic contour is smooth}. 
Lastly, the proof of $(A3)$ uses Lemma \ref{lem:19.2 either q1 is small, or q1,q2 are compatible} 
and Items \ref{Item:period of rank i simple in any lower rank} and \ref{Item:conjugate periods are equiv} of Lemma \ref{lem:our lem 19.3}. 
\end{proof}

\begin{lemma}[{\cite[Lemma 19.5]{OL-book}}]\label{lem:19.5 section of reduced diagram with periodic contour is smooth}
Let $p$ be a section of the contour of a reduced diagram $\Delta$ of rank $i+1$ whose label is $A$-periodic, where $A$ is simple of rank $i+1$ or is a period of rank $l\leq i+ 1$, and in the latter case $\Delta$ has no cells of rank $l$ $A$-compatible with $q$. (If $p$ is a cyclic section, then we further require that $\phi(p)\equiv A^m$ for some integer $m$.) Then $p$ is a smooth section in the contour of $\Delta$.
\end{lemma}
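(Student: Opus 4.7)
The plan is to verify directly the two conditions in the definition of a smooth section of rank $|A|$: first, that every subpath of $p$ of length at most $\max(|A|, 2)$ is geodesic in $\Delta$, and second, that every contiguity submap $\Gamma$ of any cell $\pi$ to $p$ with $(\pi,\Gamma,p)\geq \epsilon$ satisfies $|\Gamma\cap p|<(1+\gamma)|A|$. Both are proved by contradiction, and both reduce to applying Lemma \ref{lem:19.2 either q1 is small, or q1,q2 are compatible} to an auxiliary four-sided subdiagram, then invoking the simplicity/period hypotheses through Lemma \ref{lem:our lem 19.3} and Definition \ref{def:simple}.

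For the geodesicity condition, suppose some subpath $p_0\subseteq p$ of length at most $\max(|A|,2)$ is not geodesic. Then there is a path $t$ with $|t|<|p_0|$ and the same endpoints as $p_0$, bounding (together with $p_0$) a reduced subdiagram of rank $\leq i+1$. I would regard $p_0$ as an $A$-periodic section and $t$ as a short connector, and apply Lemma \ref{lem:19.2 either q1 is small, or q1,q2 are compatible} with $q_1=p_0$ and the other periodic side degenerate (or the cell-boundary of a $\gamma$-cell obtained from Lemma \ref{lem:Greendlinger-like lemma} applied to the subdiagram). The conclusion is either $|p_0|<(1+\gamma)|A|$, which combined with $|t|<|p_0|$ yields an element shorter than $A$ conjugate in rank $\leq i$ to $A^{\pm 1}$, or $A$ is itself conjugate to a shorter word in rank $\leq i+1$. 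Both outcomes contradict Item (S2) of Definition \ref{def:simple} if $A$ is simple of rank $i+1$, and contradict Item \ref{Item:period of rank i simple in any lower rank} of Lemma \ref{lem:our lem 19.3} (which asserts a period of rank $l$ is simple in all lower ranks, hence minimal-length in its conjugacy class in those ranks) if $A$ is a period.

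For the contiguity condition, suppose by contradiction that an $\calR$-cell $\pi$ of some rank $l'\leq i+1$ has a contiguity submap $\Gamma$ to $p$ with $(\pi,\Gamma,p)\geq \epsilon$ and $|\Gamma\cap p|\geq (1+\gamma)|A|$. Let $B$ be the period of rank $l'$ labeling $\partial\pi$, so that $\phi(\partial\pi)\equiv B^k$. Excising $\Gamma$ gives a reduced subdiagram whose contour decomposes as $p_1 q_1 p_2 q_2$ where $q_1\subseteq p$ is an $A$-periodic section of length $\geq (1+\gamma)|A|$, $q_2\subseteq \partial\pi$ is a $B$-periodic section of length $\geq \epsilon k|B|$, and $p_1,p_2$ are the short connecting arcs inherent in the definition of a contiguity subdiagram; their lengths are bounded by an absolute constant, so in particular $\max(|p_1|,|p_2|)<\zeta k\min(|A|,|B|)$. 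This puts us exactly in the hypothesis of Lemma \ref{lem:19.2 either q1 is small, or q1,q2 are compatible}, whose first alternative $|q_1|<(1+\gamma)|A|$ is excluded by assumption, forcing $A$ to be conjugate in rank $\leq i+1$ to $B^{\pm 1}$ and, moreover, $q_1$ and $q_2$ to be $A$-compatible. If $A$ is simple of rank $i+1$, then conjugacy to $B^{\pm 1}$ contradicts Item (S1) of Definition \ref{def:simple}. If $A$ is a period of rank $l$, then by Items \ref{Item:period of rank i simple in any lower rank} and \ref{Item:conjugate periods are equiv} of Lemma \ref{lem:our lem 19.3} the conjugacy forces $B\equiv A$ and $l'=l$, and the $A$-compatibility of $\pi$ with $p$ directly violates the hypothesis that no cell of rank $l$ in $\Delta$ is $A$-compatible with $p$.

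The main obstacle I anticipate is the careful bookkeeping of the four-section contour when excising $\Gamma$: one must verify that after this surgery the resulting subdiagram is still reduced and of rank $\leq i+1$ so that the inductive hypothesis (namely Lemma \ref{lem:19.2 either q1 is small, or q1,q2 are compatible}, proved for rank $\leq i$ in the simultaneous induction) actually applies, and that the contiguity arcs $p_1,p_2$ satisfy the quantitative smallness $\max(|p_1|,|p_2|)<\zeta k\min(|A|,|B|)$. The latter follows from the general constant bound on the length of the connecting arcs in the definition of a contiguity subdiagram, combined with $k$ being chosen sufficiently large relative to that constant. A secondary subtlety is the cyclic case $\phi(p)\equiv A^m$, where Lemma \ref{lem:19.2 either q1 is small, or q1,q2 are compatible} must be applied to an annular rather than circular subdiagram (using Corollary \ref{lem:annular Greendlinger-like lemma} in place of Lemma \ref{lem:Greendlinger-like lemma}); the argument is entirely analogous.
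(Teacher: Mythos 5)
Your proposal follows essentially the same approach as the paper's proof, which simply defers to Ol'shanskii's original argument and records the ingredients: Lemma \ref{lem:19.2 either q1 is small, or q1,q2 are compatible}, Items \ref{Item:period of rank i simple in any lower rank} and \ref{Item:conjugate periods are equiv} of Lemma \ref{lem:our lem 19.3}, and Item (S2) of Definition \ref{def:simple}. Your treatment of the contiguity condition is a correct and careful expansion of exactly this: excise the contiguity submap, check that the side arcs are short enough and the periods appropriately simple in the rank of the excised subdiagram, invoke Lemma \ref{lem:19.2 either q1 is small, or q1,q2 are compatible}, and close the argument via (S1) (when $A$ is simple of rank $i+1$) or via Lemma \ref{lem:our lem 19.3} combined with the ``no $A$-compatible cell of rank $l$'' hypothesis (when $A$ is a period). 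You additionally invoke (S1), which the paper's terse summary omits; that is a real and correct ingredient in the simple case. Your observation about the annular variant for cyclic sections is also the right thing to flag.

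The one place where the proposal is imprecise is the geodesicity condition. Applying Lemma \ref{lem:19.2 either q1 is small, or q1,q2 are compatible} ``with the other periodic side degenerate'' does not literally parse: that lemma requires a genuine second $B$-periodic section with $|q_2|\geq\epsilon k|B|$, and there is no degenerate form of it. Your parenthetical alternative — pass to a $\gamma$-cell of the subdiagram bounded by the non-geodesic subpath and its shorter replacement, via Lemma \ref{lem:Greendlinger-like lemma}, and then feed its high contiguity to the $A$-periodic side into Lemma \ref{lem:19.2 either q1 is small, or q1,q2 are compatible} — is the correct mechanism, and the final contradiction is against (S2) or Item \ref{Item:period of rank i simple in any lower rank} of Lemma \ref{lem:our lem 19.3} exactly as you say. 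So this is a gap in precision rather than a wrong idea: the first alternative you offer should be dropped, and the $\gamma$-cell route should be made the primary argument and carried out in detail, including the case distinction according to whether the excised subdiagram has rank $0$ (in which case cyclic reducedness of $A$ already gives the contradiction).
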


\begin{proof}
The proof is identical to the original proof in \cite{OL-book}.
It uses Lemma \ref{lem:19.2 either q1 is small, or q1,q2 are compatible} 
and Items \ref{Item:period of rank i simple in any lower rank} and \ref{Item:conjugate periods are equiv}  of Lemma \ref{lem:our lem 19.3}. 
It also uses Item (S2) of Definition \ref{def:simple}.
\end{proof}

Lastly, we include the following lemma from Section §13 in \cite{OL-book}.
\begin{lemma}[{\cite[Lemma 13.2]{OL-book}}]\label{lem:13.2 boundary of two cells can't be compatible}
Suppose that, in a reduced diagram $\Delta$ the contour of a cell $\Pi_1$ has label $A^{\pm1}$ for some period $A$ of rank $j$. Then: (1) if $\Gamma$ is a subdiagram with contour $p_1q_1p_2q_2$, where $q_1$ is a subpath in $\partial \Pi_1$ and $q_2$ is a subpath in $\partial \Pi_2$ for some cell $\Pi_2$ with contour label $A^{\pm 1}$ such that $\Pi_1$ and $\Pi_2$ are not contained in $\Gamma$, then $q_1$ and $q_2$ cannot be $A$-compatible in $\Gamma$; (2) if $\Gamma$ is a subdiagram with contour $q_1q$, where $q_1$ is a subpath in $\partial \Pi_1$ and $\Pi_1$ is not contained in $\Gamma$, then no cell in $\Gamma$ with contour label $A^{\pm 1}$ is $A$-compatible with $q_1$.\end{lemma}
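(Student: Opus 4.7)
The plan is to prove both parts by contradiction, exhibiting a cancellation procedure that would produce a diagram with strictly fewer $\calR$-cells than $\Delta$ but with the same contour label, contradicting the reducedness (minimality of the number of $\calR$-cells) of $\Delta$.

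For part (1), suppose $q_1\subset\partial\Pi_1$ and $q_2\subset\partial\Pi_2$ are $A$-compatible in $\Gamma$. By definition, $A$-compatibility provides a pair of connecting paths (of controlled combinatorial form) such that the labels along $q_1$ and $q_2$ match up in an $A$-periodic manner. Since the entire contours of $\Pi_1$ and $\Pi_2$ are labeled by powers of $A$ (each equal to a cyclic shift of the rank-$j$ relator $A^{\pm k}$), the $A$-periodicity propagates: the compatibility on the subpaths $q_1, q_2$ extends to a full matching of $\partial\Pi_1$ with $\partial\Pi_2$ via a pair of connecting paths in $\Gamma$ whose labels are cyclic shifts of one another. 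This is the key algebraic consequence of the fact that both boundaries are $A$-periodic loops of the same total length.

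Using this full matching, the plan is to cut $\Gamma$ along a connecting path, identify $\partial\Pi_1$ with (the inverse of) $\partial\Pi_2$, and excise the pair of cells $\Pi_1,\Pi_2$, replacing them with a region labeled trivially in the free group. Concretely, one constructs a new diagram $\Delta'$ by removing $\Pi_1$ and $\Pi_2$ and attaching the freed boundaries according to the matching; the crucial point is that $\Pi_1$ and $\Pi_2$ are \emph{not} contained in $\Gamma$, so they sit on the boundary of the subdiagram and may be removed without disturbing the rest of $\Delta$. The resulting diagram has the same contour label as $\Delta$ but contains at least two fewer $\calR$-cells, contradicting the assumption that $\Delta$ is reduced.

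For part (2), the argument reduces to part (1): if some cell $\Pi_2\subset\Gamma$ with contour label a cyclic shift of $A^{\pm k}$ is $A$-compatible with $q_1$, then $\Pi_1$ and $\Pi_2$ play exactly the roles of the two cells in part (1), and the same cut-and-paste argument yields a contradiction.

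The main obstacle will be making the cut-and-paste rigorous: one must verify that after identifying the two cell boundaries and excising $\Pi_1$ and $\Pi_2$, the resulting object is still a valid diagram (labels along identified edges agree, the cellular structure remains planar/annular, and the outer boundary label is preserved). The technical content lies entirely in the fact that $A$-compatibility on a subpath of an $A$-periodic loop propagates to a compatibility on the entire loop, which is what allows the removal to be performed without introducing new cells or altering the outer boundary label.
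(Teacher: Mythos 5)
Your proposal reproduces the standard $j$-pair cancellation argument of Ol'shanskii's Lemma 13.2, which is exactly what the paper invokes by direct reference to \cite[Lemma 13.2]{OL-book}. The paper's only added content is noting that Presentation~\ref{eq:G=<S|bigcup R_i>} satisfies the one structural hypothesis the cancellation argument actually uses --- that for each period $A$, precisely one power of $A$ (namely $A^k$) is a relator, and it is a relator of a unique rank --- a property you use implicitly when you write both cell boundaries as cyclic shifts of $A^{\pm k}$, but which is worth flagging explicitly since the notions of ``simple word'' and ``period'' are modified here relative to \cite{OL-book}.
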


\begin{proof}
The proof is identical to the original proof in \cite{OL-book}, as it only uses the fact that for each $i\geq 1$, certain simple words called periods of rank $i$ are chosen such that if $A$ is a period of rank $i$ and $A^n$ is a relator in $\calR$, then $A^n$ is a relator of rank $i$ and $A^m\notin \calR$ for all $m\neq n$. 
\end{proof}

\end{document}